\numberwithin{equation}{section}
\newtheorem{theorem}{Theorem}[section]
\newtheorem{lemma}[theorem]{Lemma}
\newtheorem{proposition}[theorem]{Proposition}
\newtheorem{corollary}[theorem]{Corollary}
\newtheorem{conjecture}[theorem]{Conjecture}
\theoremstyle{definition}
\newtheorem{definition}[theorem]{Definition}
\newtheorem{example}[theorem]{Example}
\theoremstyle{remark}
\newtheorem{remark}[theorem]{Remark}
\newcommand{\C}{\mathbb{C}}
\newcommand{\Sp}{\mathbb{S}}
\newcommand{\Q}{\mathbb{Q}}
\newcommand{\R}{\mathbb{R}}
\newcommand{\Z}{\mathbb{Z}}
\newcommand{\N}{\mathbb{N}}
\newcommand{\SU}{\operatorname{SU}}
\newcommand{\SL}{\operatorname{SL}}
\newcommand{\SYMP}{\operatorname{SP}}
\newcommand{\SO}{\operatorname{SO}}
\newcommand{\OO}{\operatorname{O}}
\newcommand{\GL}{\operatorname{GL}}
\newcommand{\id}{\operatorname{id}}
\newcommand{\co}{\colon\thinspace}
\newcommand{\git}{/\!\!/}
\newcommand{\bs}{\boldsymbol}
\newcommand{\Hilb}{\operatorname{Hilb}}
\newcommand{\calO}{\mathcal{O}}
\newcommand{\pr}{\mathrm{pr}}
\newcommand{\sm}{\mathrm{sm}}
\newcommand{\codim}{\operatorname{codim}}
\newcommand{\Spec}{\operatorname{Spec}}
\newcommand{\inv}{^{-1}}
\newcommand{\Cl}{\operatorname{Cl}}
\newcommand{\vol}{\operatorname{vol}}
\newcommand{\NN}{\mathcal N}
\newcommand{\lie}{\operatorname{Lie}}
\begin{document}

\title{Symplectic quotients have symplectic singularities}

\author{Hans-Christian Herbig}
\email{herbighc@gmail.com}
\address{Departamento de Matem\'{a}tica Aplicada, Universidade Federal do Rio de Janeiro,
Av. Athos da Silveira Ramos 149, Centro de Tecnologia - Bloco C, CEP: 21941-909 - Rio de Janeiro, Brazil}

\author{Gerald W. Schwarz}
\email{schwarz@brandeis.edu}
\address{Department of Mathematics, Brandeis University,
Waltham, MA 02454-9110, USA}

\author{Christopher Seaton}
\email{seatonc@rhodes.edu}
\address{Department of Mathematics and Computer Science,
Rhodes College, 2000 N. Parkway, Memphis, TN 38112, USA}

\subjclass[2010]{Primary 53D20, 13A50; Secondary 20G20, 57S15, 13H10}
\keywords{singular symplectic reduction, symplectic singularity, moment map, Hamiltonian action,
        graded Gorenstein}

\thanks{C.S. was supported by the E.C.~Ellett Professorship in Mathematics.}

\begin{abstract}
Let $K$ be a compact Lie group with complexification $G$, and let $V$ be a unitary
$K$-module. We consider the real symplectic quotient $M_0$ at level $0$ of the
homogeneous quadratic moment map as well as the complex symplectic quotient, defined
here as
the complexification of $M_0$. We show that if $(V, G)$ is $3$-large, a condition
that holds generically, then the complex symplectic quotient has symplectic singularities
and is graded Gorenstein. This in particular implies that the real symplectic quotient
is graded Gorenstein. In the case that $K$ is a torus or $\operatorname{SU}_2$, we show
that these results hold without the hypothesis that $(V,G)$ is $3$-large.
\end{abstract}

\maketitle

% xxxxxxxxxxxxxxxxxxxxxxxxxxxxxxxxxxxxxxxxxxxxxxxxxxxxxxxxxxxxxxxxxxxxxxxxx
% xxxxxxxxxxxxxxxxxxxxxxxxxxxxxxxxxxxxxxxxxxxxxxxxxxxxxxxxxxxxxxxxxxxxxxxxx
% xxxxxxxxxxxxxxxxxxxxxxxxxxxxxxxxxxxxxxxxxxxxxxxxxxxxxxxxxxxxxxxxxxxxxxxxx

\section{Introduction}
\label{sec:Intro}

Let $K$ be a compact Lie group, let $V$ be a unitary $K$-module and let $G = K_{\C}$
denote the complexification of $K$. We denote the Lie algebras of $K$ and $G$ by
$\mathfrak{k}$ and $\mathfrak{g}$, respectively.
Let $J\co V\to\mathfrak{k}^\ast$ be the associated homogeneous quadratic moment map.
The \emph{real symplectic quotient} at the zero level of $J$ is $M_0 = J^{-1}(0)/K$.
The complexification of $M_0$ is the corresponding \emph{complex symplectic quotient};
see Definitions \ref{def:RSympQuot} and \ref{def:CSympQuot}.
One may equivalently begin with a complex reductive group $G$
and a $G$-module $V$ and then choose a maximal compact subgroup $K$ of $G$ as well as a
Hermitian structure on $V$ with respect to which the $K$-action is unitary.

In this paper, we demonstrate that complex symplectic quotients are
frequently symplectic varieties in the sense of \cite{Beauville}. This in particular implies
that they are
Gorenstein with rational singularities. Moreover, we demonstrate that they are \emph{graded Gorenstein},
sometimes called \emph{strongly Gorenstein}, meaning that they are Gorenstein with no
degree shift in their canonical module. This implies that
the Zariski closure of the real symplectic quotient $M_0$ is also graded Gorenstein.
Note that $M_0$ is in a natural way a semialgebraic set, see Section \ref{subsec:BackSympQuot}.
Then $\R[M_0]$, the ring of real regular functions on $M_0$, is also graded Gorenstein.

In order to state our main result, let us first explain our notation. For a $G$-module $V$,
we let $\mu = J\otimes_{\R}\C\co V\otimes_{\R}\C \simeq V\oplus V^\ast \to \mathfrak{g}^\ast$
denote the complex moment map. Then $\mu(v,\xi)(A) = \xi(A(v))$ for $A \in\mathfrak{g}$ and
$(v,\xi)\in V\oplus V^\ast$. See Definition \ref{def:KLarge} for the definitions of $k$-large, $k$-principal, and stable,
properties of a $G$-module $(V,G)$,
and note that $k$-large implies $(k-1)$-large. If $(V,G)$ is at least $1$-large, then the complex symplectic
quotient is given by the categorical quotient $\mu^{-1}(0)\git G$. Our main result is the following.

\begin{theorem}
\label{thrm:GeneralMain}
Suppose that $(V,G)$ is $3$-large. Then the complex symplectic quotient
$\mu^{-1}(0)\git G$ has symplectic
singularities and is graded Gorenstein. In particular,
$\mu^{-1}(0)\git G$ has rational singularities.
\end{theorem}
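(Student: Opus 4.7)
My plan is to verify Beauville's axioms for symplectic singularities---normality, existence of a holomorphic symplectic form on the smooth locus, and extension of this form across any resolution---together with the graded Gorenstein condition, by first analyzing $\mu^{-1}(0)\subset V\oplus V^\ast$ and then descending to the categorical quotient. Once these properties of the quotient are in hand, rational singularities follow either from the known equivalence (for normal varieties carrying a symplectic form on the smooth locus) between symplectic and canonical singularities, or directly via Boutot's theorem applied to the corresponding property on $\mu^{-1}(0)$.

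First I would show that $\mu^{-1}(0)$ is a graded Gorenstein complete intersection of codimension $\dim G$. The 1-large hypothesis already guarantees that the components of $\mu$ form a regular sequence, so $\mu^{-1}(0)$ is a reduced Cohen--Macaulay complete intersection in the smooth affine space $V\oplus V^\ast$. Adjunction identifies $\omega_{\mu^{-1}(0)}$ with a $G$-equivariant line bundle built from $\det(\mathfrak{g}^\ast)$ and $\omega_{V\oplus V^\ast}|_{\mu^{-1}(0)}$; the latter is $G$-equivariantly trivial because $V\oplus V^\ast$ is self-dual as a $G$-module, and the character of $G$ on $\det(\mathfrak{g}^\ast)$ vanishes for connected reductive $G$. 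Tracking gradings in the adjunction formula, using that $\mu$ is homogeneous of degree $2$ and that the Liouville form has matching degree, should show that $\omega_{\mu^{-1}(0)}$ is trivial with no degree shift, establishing graded Gorensteinness of $\mu^{-1}(0)$ itself.

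Next I would use the 2-large and 3-large hypotheses to bound the codimension of the strata of $V\oplus V^\ast$ where $d\mu$ drops rank or where stabilizers jump. Combined with Cohen--Macaulayness, Serre's criterion $R_1+S_2$ yields normality of $\mu^{-1}(0)$, and a further codimension estimate should give rational singularities. These properties descend to $\mu^{-1}(0)\git G$: normality by taking $G$-invariants of a normal ring, rational singularities by Boutot's theorem, and graded Gorensteinness because the canonical bundle is $G$-equivariantly trivial with no degree shift, hence its invariants are the dualizing module of the quotient with the trivial grading. Marsden--Weinstein--Meyer reduction supplies a holomorphic symplectic form on the smooth locus of $\mu^{-1}(0)\git G$, and under the Gorenstein plus rational singularities conditions just established, this form extends across any resolution, completing the verification of Beauville's axioms.

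The main obstacle will be the rational singularities estimate for $\mu^{-1}(0)$, which is where 3-largeness (rather than some weaker hypothesis) is expected to be essential. A natural strategy is a slice-theoretic induction: at each singular point, the \'etale-local structure is governed by a smaller symplectic quotient coming from a slice representation of the isotropy subgroup $H\subset G$, and 3-largeness of $(V,G)$ should descend to at least 2-largeness of the slice datum for $H$, allowing the induction to close on a base case of small groups such as tori or $\SU_2$, which are precisely the cases the abstract isolates as accessible without the 3-large assumption. Verifying that the induction actually closes---that the slice representations really inherit enough largeness, and that the base cases are controlled---is where most of the technical work will lie.
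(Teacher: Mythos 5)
Your outline gets the easy ingredients right (the complete intersection and regular sequence structure of $\mu^{-1}(0)$, normality via Serre's criterion from the modularity codimension bounds, the symplectic form on the smooth locus via reduction/slices), but the load-bearing step is routed through a claim that is neither established nor needed: rational singularities of $\mu^{-1}(0)$ itself, to be pushed down by Boutot, or equivalently rational singularities of the quotient as an \emph{input} to Namikawa's converse so that the $2$-form extends over resolutions. That makes the argument circular at exactly the hard point: Namikawa's criterion needs rational singularities, which you propose to get from the shell, where it is not known in general (the paper only verifies it for special families such as $\SO_n$, tori, and a few $\SL_2$-modules, and explicitly treats it as a stronger, case-by-case fact). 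Your fallback, a slice-theoretic induction ``closing on tori or $\SU_2$,'' does not close: those groups are handled separately in the paper precisely because they are \emph{not} instances of the $3$-large theorem, and slice representations of isotropy groups of a $3$-large module need not land in any inductively smaller class for which shell-level rational singularities is known. The paper avoids all of this by a different mechanism: it proves that the singular locus of $N\git G$ has complex codimension at least $2k\geq 6$ (Lemmas \ref{lem:FirstCodim}--\ref{lem:CodimFinite} and Theorem \ref{thrm:Codim}), and then invokes Flenner's extension theorem (Theorem \ref{thrm:FlennerCodim}), so that condition ({\it iii}) of Beauville's definition is automatic and rational singularities come out as a \emph{consequence} of symplectic singularities, not a prerequisite. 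The place where $3$-largeness is really used is also different from where you expect: it is needed to prove $(N\git G)_{\sm}=(N\git G)_{\pr}$ (Theorem \ref{thrm:Smooth=Princ3Large}), via a topological argument with Goresky--MacPherson and Fulton--Lazarsfeld connectivity and homotopy exact sequences for the punctured null cone; without this identification neither the codimension bound on the singular locus nor the descent of the symplectic form to \emph{all} smooth points is available.

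Your graded Gorenstein step also has a gap. Saying that the canonical module of the quotient is ``the invariants of the dualizing module with the trivial grading'' is not automatic: invariants of a Gorenstein ring need not be Gorenstein, and identifying $(\omega_N)^G$ with $\omega_{N\git G}$ requires control of the quotient map over a large enough locus. The paper first gets Gorensteinness of $N\git G$ from symplectic singularities (Beauville/Namikawa) and then, in Theorem \ref{thrm:GradeGoren}, computes the degree shift by writing down the explicit generator $\sigma_N$ of $\omega_{\C[N]}$ of degree $2n-2g$, contracting with the vector fields $A_1,\dots,A_g$ to obtain a genuinely $G$-invariant $\sigma_{N\git G}$ (note $G$ may be disconnected, so your appeal to triviality of the character on $\det\mathfrak{g}^\ast$ for connected reductive groups is insufficient), and using $(N\git G)_{\sm}=(N\git G)_{\pr}$ together with nonvanishing of $\sigma_N$ on $N_{\sm}$ to see that $\sigma_{N\git G}$ generates the canonical module. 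So both halves of your proposal ultimately hinge on the smooth-equals-principal theorem and the Flenner shortcut, neither of which appears in your plan.
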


Recall that a complex variety $Y$ has rational singularities if it is normal and, for every resolution
of singularities $f\co X\to Y$, $R^i f_\ast \calO_X = 0$ for $i > 0$; see \cite[Section 5.1]{KollarMori}.

For a fixed choice of $K$ and $G$, Theorem \ref{thrm:GeneralMain} implies that the complex symplectic quotient
associated to ``most" $G$-modules has symplectic singularities and is graded Gorenstein.
For example, if $G$ is connected and semisimple, then all but finitely many $V$ such that $V^G=\{0\}$ and
each irreducible component of $V$ is almost faithful (i.e. the kernel of the action is finite) are $3$-large;
see Section \ref{subsec:MostVLarge}. In Section \ref{sec:GeneralMain}, we will also show that the
conclusions of Theorem \ref{thrm:GeneralMain} hold if $(V,G)$ is $2$-large with additional hypotheses.

When $(V,G)$ fails to be $1$-large, there are different ways of realizing $\mu^{-1}(0)\git G$ as a scheme
and hence different definitions of the complex symplectic quotient. It is known that there are examples of $(V,G)$
such that, by some such definitions, the complex symplectic quotient $\mu^{-1}(0)\git G$ fails to have
symplectic singularities, see \cite{Becker,BuloisLehnTerpereau} and Sections
\ref{subsec:BackSympQuot} and \ref{sec:SU2} below. In
this paper, we propose another definition of the complex symplectic quotient for which we
expect Theorem \ref{thrm:GeneralMain} to hold more generally, though the proof will require very different
techniques. Specifically, we define the complex symplectic quotient to be the complexification of the real
quotient, $\Spec \big( \R[M_0]\otimes_{\R}\C\big)$, see Definition \ref{def:CSympQuot}, corresponding to
taking the real radical of $(J)$ before complexifying. Then for the few $(V,G)$ that fail to satisfy the
hypotheses of Theorem \ref{thrm:GeneralMain}, we make the following.
\begin{conjecture}
\label{conj:Main}
For all $(V,G)$, the (modified) complex symplectic quotient of Definition \ref{def:CSympQuot}
has symplectic singularities and is graded Gorenstein.
\end{conjecture}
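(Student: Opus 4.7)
The plan is to extend Theorem \ref{thrm:GeneralMain} from the $3$-large situation to arbitrary $(V,G)$ by combining the symplectic slice theorem of Marle--Guillemin--Sternberg with the torus and $\SU_2$ results claimed in this paper as base cases, and by carefully controlling the effect of replacing the ideal $(J)$ by its real radical before complexification. The key observation is that both properties in question are essentially local: symplectic singularities can be checked on any open covering of the normal part, and the graded Gorenstein condition at a graded affine cone can be reduced to the smooth locus plus a codimension check at the cone point.

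First, I would attempt an induction on $\dim V + \dim K$ via the symplectic slice theorem. At a point $x \in J\inv(0)$ with closed $K$-orbit and stabilizer $K_x$, an analytic neighborhood of $[x]$ in $M_0$ is isomorphic to a product of a smooth symplectic factor with the germ at zero of the singular symplectic quotient of the symplectic slice $W_x$ at level zero by $K_x$. When $x$ is not the origin, the pair $(W_x, (K_x)_\C)$ is strictly smaller, and the inductive hypothesis applies. If $(W_x, (K_x)_\C)$ happens to be $3$-large, Theorem \ref{thrm:GeneralMain} applies directly. If not, one must either have reached a torus or $\SU_2$ base case (covered by the paper) or iterate the slice reduction. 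This handles the modified complex symplectic quotient away from the cone point.

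Second, I would bridge the differential-geometric slice theorem to the algebraic Definition \ref{def:CSympQuot}. Because $\R[M_0]$ is the ring of real regular functions on the semialgebraic set $M_0$, taking the real radical is compatible with restriction to invariant analytic neighborhoods. Thus $\Spec\bigl(\R[M_0]\otimes_\R\C\bigr)$ should decompose, analytically locally, as a product of a smooth factor with the complexification of the real slice quotient of $W_x$ by $K_x$. Once the local models are known to be symplectic and graded Gorenstein, extension to the cone point would proceed by a standard contracting $\C^\ast$--argument: for a normal affine cone whose singular locus has codimension at least $2$ and whose dualizing module has degree zero on the smooth locus, symplectic singularities and the no-shift Gorenstein property propagate to the vertex. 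The codimension-$2$ statement follows from $3$-largeness on the principal stratum, which one still has generically.

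The base cases themselves are the finitely many $(V,G)$ that do not further reduce via slicing. Tori and $\SU_2$ are treated in this paper; for the remaining anomalous examples -- in particular those of \cite{Becker,BuloisLehnTerpereau} where the naive quotient $\mu\inv(0)\git G$ fails -- I would verify directly that passing to $\sqrt[\R]{(J)}$ removes the pathological nilpotents or spurious components that obstruct symplectic singularities, leveraging the fact that the Zariski closure of $M_0$ is reduced by construction and inherits the Poisson bracket from $V$. The main obstacle will be the bridge between the analytic slice theorem and the algebraic modified definition: one must show that the complexification of the real coordinate ring of a slice quotient agrees, as a scheme germ, with the corresponding germ of $\Spec\bigl(\R[M_0]\otimes_\R\C\bigr)$, without picking up extra primary components. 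To my knowledge no purely algebraic slice theorem for real radicals is available in the literature, and developing one -- or circumventing it via a direct invariant-theoretic computation on a case list -- is where the real work will lie.
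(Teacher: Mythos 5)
The statement you are addressing is Conjecture \ref{conj:Main}: the paper does \emph{not} prove it. It establishes only the $3$-large case (Theorem \ref{thrm:GeneralMain}) and the torus and $\SU_2$ cases (Theorems \ref{thrm:TorusMain} and \ref{thrm:SU2Main}), and explicitly states that a proof of the general conjecture is expected to ``require very different techniques.'' So there is no paper argument to compare yours against; the proposal has to stand on its own, and as written it has genuine gaps.

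The most serious gap is at the cone point, which is exactly where the difficulty lives. Your slice induction reduces nothing there: at the origin of $J\inv(0)$ the stabilizer is all of $K$ and the slice is all of $V$, and the iterated slices at nonzero points do not terminate in torus or $\SU_2$ base cases for general $K$ --- subprincipal stabilizers can be arbitrary reductive subgroups, and the non-$3$-large modules form a finite list only for fixed semisimple $G$ (Theorem \ref{thrm:MostVLarge}); with positive-dimensional center the list is infinite. The proposed ``standard contracting $\C^\times$-argument'' to pass from the punctured cone to the vertex is not standard: the symplectic-singularity condition requires extending the $2$-form over a resolution, for which codimension $2$ of the singular locus is insufficient (Theorem \ref{thrm:FlennerCodim} needs codimension $4$, and Theorem \ref{thrm:NamikawaRatGoren} instead needs Gorenstein plus rational singularities, neither of which you have established); and the graded Gorenstein property is a statement about the $a$-invariant, i.e.\ the degree of a generator of the canonical module, which does not propagate from the punctured cone --- the paper must compute it by an explicit volume-form argument (Theorem \ref{thrm:GradeGoren}) under hypotheses ($1$-large, normal shell, $(N\git G)_{\pr}=(N\git G)_{\sm}$) that are precisely what fail in the open cases. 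Your claim that ``the codimension-$2$ statement follows from $3$-largeness \ldots which one still has generically'' is circular, since the conjecture concerns exactly the non-$3$-large $(V,G)$. Finally, as you concede, there is no slice theorem adapted to the modified quotient $\Spec\bigl(\R[M_0]\otimes_\R\C\bigr)$; worse, this scheme has many complex points lying over no point of $M_0$, and the Marle--Guillemin--Sternberg slice theorem at points of $J\inv(0)$ gives no control of the local structure there (compare Remark \ref{rem:IsotropyRvsV} and Example \ref{ex:IsotropyRvsV}), so even granting the induction your local models do not cover the variety whose singularities must be controlled. Taken together, the proposal is a reasonable plan for recovering the known cases, but it does not close the conjecture.
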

Other authors define the complex symplectic quotient to be either $\Spec \big(\C[V\oplus V^\ast]^G/(\mu)^G\big)$ or
$\Spec \big(\C[V\oplus V^\ast]^G/\big(\sqrt{\mu}\big)^G\big)$, which can fail to have symplectic
singularities or even be reduced or Cohen-Macaulay. See Section \ref{subsec:BackSympQuot} for a discussion
of these definitions as well as examples where they do not coincide.
Using our definition, we are not aware of any counterexamples to Conjecture \ref{conj:Main}.
It is important to note that the three definitions yield the same scheme in the case of
$k$-large representations for $k\geq 1$; see Lemma \ref{lem:R=CSympQuot}. Hence, as $(V,G)$ is assumed $3$-large
in Theorem \ref{thrm:GeneralMain}, the notation $\mu^{-1}(0)\git G$ is unambiguous, and this result holds
for any of the three definitions.

One piece of evidence in
the direction of Conjecture \ref{conj:Main} is related to the question of which (real) symplectic
quotients are (graded regularly) symplectomorphic to a symplectic orbifold, the quotient of a
real symplectic vector
space by a finite group of symplectomorphisms. The authors have demonstrated in
\cite[Theorem 1.3]{HerbigSchwarzSeaton} that such isomorphisms are rare: one can only exist if
$(V,G)$ fails to be $2$-large (or even $2$-principal and stable if $G$ is connected,
\cite[Theorem 1.1]{HerbigSchwarzSeaton}). This has similar implications for the complex
symplectic quotients; see Section \ref{sec:Background}.
A symplectic orbifold has symplectic singularities by \cite[Proposition 2.4]{Beauville} and is graded Gorenstein by
the work of Watanabe \cite{WatanabeGorenstein}; see \cite[Theorem 7.1]{StanleyInvarFinGrp} and
\cite[Section 6.1]{HerbigHerdenSeaton}. Hence, only symplectic quotients associated to ``small" $G$-modules
inherit the properties of being symplectic varieties and graded Gorenstein via an isomorphism with an orbifold,
while Theorem \ref{thrm:GeneralMain} demonstrates these properties for ``large" $G$-modules. A major part of
the motivation of this investigation is the observation that symplectic quotients are rarely isomorphic
to orbifolds yet appear to share many of the properties of orbifolds.
While our emphasis is on ``large" $G$-modules, the recent work of
Bulois, Lehn, Lehn, and Terpereau \cite{BuloisLehnTerpereau} focuses on defining the complex
symplectic quotients for ``small" $G$-modules, specifically ones which are polar. For small cases, the three definitions of the complex symplectic quotient can yield different results,
and the quotients considered in \cite{BuloisLehnTerpereau} are quite often orbifolds.

Other evidence for
Conjecture \ref{conj:Main} is as follows.
In \cite[Section 8.3]{HerbigSeaton}, the Hilbert series of the ring of regular functions on the (real)
symplectic quotients corresponding to several non-$1$-large representations of
$\OO_n(\C)$ and $\SU_2$ were
computed and seen to be so-called \emph{symplectic} Hilbert series; this condition was later
identified in \cite[Corollary 1.8]{HerbigHerdenSeaton} to be equivalent to the ring being graded Gorenstein.
In \cite{CapeHerbigSeaton}, symplectic quotients corresponding to sums of the standard representations of
$\OO_n(\C)$ and $\SO_n(\C)$ were studied.
For small cases, the graded
Gorenstein property was verified by explicit computation of the Hilbert series.  It was demonstrated that, among these representations,
$\mu^{-1}(0)$ already has rational singularities if the representation
$(V,\SO_n(\C))$ is $2$-large. We will see other
examples of $(V,G)$ such that $\mu^{-1}(0)$ has rational singularities; see Proposition
\ref{prop:TorusRationalShell} and Section \ref{sec:SU2}.

The complex symplectic quotients corresponding to classical representations of
$\GL_n(\C)$, $\OO_n(\C)$, and
$\operatorname{Sp}_n(\C)$ have been shown to have symplectic singularities in
\cite[Appendix A.2]{TerpereauThesis},
see also \cite{BuloisLehnTerpereau,TerpereauHilbSch}. Becker \cite{Becker} has demonstrated that
the complex symplectic quotients associated to certain representations of $\SL_2(\C)$
are symplectic varieties as well.
Note that these references consider also the construction of
\emph{symplectic resolutions} of these symplectic quotients. The existence of symplectic resolutions appears to
be rare among symplectic varieties; see \cite[Theorem 3.2]{VerbitskyHoloSymp}.
Bellamy and Schedler \cite{BellamySchedlerQuiver} have demonstrated that
Nakajima quiver varieties are symplectic varieties and identified which admit symplectic resolutions.
More recently and using the results contained in this paper, they have shown in \cite{BellamySchedler} that
if $(V,G)$ is $3$-large,
$G/[G,G]$ is finite, and $G$ acts nontrivially on $V$, then the complex symplectic quotient
does not admit a symplectic resolution. They have further demonstrated that for $(V,G)$ $3$-large,
the complex symplectic quotient has terminal singularities, is $\Q$-factorial if and only if
$G/[G,G]$ is finite, and is locally factorial when $G/[G,G]$ is trivial.
As explained above, the definition of
complex symplectic quotient is different than ours in the above references, though the definitions coincide
in $1$-large cases.

To further illustrate Conjecture \ref{conj:Main}, we consider non-$3$-large $G$-modules for certain $G$.
We first consider the case of a torus, yielding the following.

\begin{theorem}
\label{thrm:TorusMain}
Assume the identity component $K^0$ of $K$ is a torus. Then the corresponding (modified)
complex symplectic quotient of Definition \ref{def:CSympQuot} has
symplectic singularities and is graded Gorenstein. In particular,
the complex symplectic quotient has rational singularities.
\end{theorem}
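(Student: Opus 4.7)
The plan is to first reduce to the case where $K = K^0$ is a connected torus. The finite group $\Gamma = K/K^0$ acts on the complex symplectic quotient associated to $K^0$ preserving its grading and the holomorphic symplectic form on the smooth locus, and the $K$-symplectic quotient is the $\Gamma$-quotient of the $K^0$-symplectic quotient. By \cite[Proposition 2.4]{Beauville}, finite symplectic quotients of varieties with symplectic singularities still have symplectic singularities, and by Watanabe's theorem (as discussed for the symplectic setting in \cite[Section 6.1]{HerbigHerdenSeaton}) the graded Gorenstein property descends to the finite quotient. Thus it suffices to treat the case where $K$ itself is a torus.

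For $K$ a torus, I would work with the explicit weight decomposition $V = \bigoplus_{i=1}^n \C_{a_i}$, where $a_1,\dots,a_n\in \mathfrak{k}^*$ are the weights, and write the complex moment map in coordinates as $\mu(z,w) = \sum_i a_i\, z_i w_i$. The modified complex symplectic quotient $\Spec\bigl(\R[M_0]\otimes_\R\C\bigr)$ of Definition \ref{def:CSympQuot} is by construction the reduced scheme whose closed points correspond to the GIT quotient of the ``physical'' zero locus of the real moment map. I would identify this quotient combinatorially with a hypertoric variety (or a torus-Weyl quotient thereof) determined by the hyperplane arrangement associated to the weights $\{a_i\}$.

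The main technical step is then to verify that this hypertoric model has symplectic singularities and is graded Gorenstein. Symplectic singularities for the $1$-large range are immediate from Theorem \ref{thrm:GeneralMain} applied to an appropriate sub-torus (since hypertoric varieties also arise as Nakajima quiver varieties, one can alternatively invoke \cite{BellamySchedlerQuiver}). The graded Gorenstein property can be established from the explicit monomial description of the invariants via the symplectic Hilbert series criterion of \cite[Corollary 1.8]{HerbigHerdenSeaton}, in the spirit of the calculations of \cite{HerbigSeaton}: one exhibits a $\Gamma$-invariant nowhere-vanishing top holomorphic form on the smooth locus with the correct weight, and uses Cohen-Macaulayness (coming from rational singularities) to promote this to a trivialization of the canonical module.

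The main obstacle will be the non-$1$-large regime. When $(V,G)$ fails to be $1$-large, the naive shell $\mu^{-1}(0)$ may be non-reduced or have extra irreducible components not seen by the real locus, so the three candidate definitions of the complex symplectic quotient diverge and the hypertoric-variety literature does not apply directly. The key claim to establish is that taking the real radical, as in Definition \ref{def:CSympQuot}, precisely excises these spurious components and produces the expected hypertoric-type variety. I would approach this by passing to a sub-torus $K' \subset K$ such that $(V,K')$ is $1$-large, first performing reduction by $K'$ to obtain a variety to which Theorem \ref{thrm:GeneralMain} and the proposition on rational singularities of the shell (Proposition \ref{prop:TorusRationalShell}) apply, and then realizing the $K$-quotient as a further toric symplectic reduction by $K/K'$, verifying that this second reduction is compatible with taking the real radical.
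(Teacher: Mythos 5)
Your overall skeleton (reduce to the connected torus, handle the finite part $\Gamma=K/K^0$ via \cite[Proposition 2.4]{Beauville}, use the Hilbert series criterion of \cite[Corollary 1.8]{HerbigHerdenSeaton} for graded Gorensteinness) matches the paper, but the two steps that carry the real weight of the proof have genuine gaps. First, your source of symplectic singularities in the torus case does not exist as stated: Theorem \ref{thrm:GeneralMain} requires $(V,G)$ to be $3$-large, not $1$-large, and most stable torus modules fail to be $3$-large (that failure is the whole reason Theorem \ref{thrm:TorusMain} needs a separate proof); moreover, general torus symplectic quotients are not Nakajima quiver varieties (only the graphical, unimodular-type weight matrices arise that way, and the unimodular case is exactly the one already covered by \cite{McGertyNevins}), so \cite{BellamySchedlerQuiver} does not apply either. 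The paper supplies what is missing here by proving directly that the shell and the quotient have rational singularities (Proposition \ref{prop:TorusRationalShell}: induction on $\dim G$, localizing at a nonzero coordinate to eliminate one weight, together with the Flenner--Watanabe criterion \cite{FlennerRational,WatanabeRational} and Boutot \cite{Boutot}), proving $(N\git G)_{\sm}=(N\git G)_{\pr}$ (Lemma \ref{lem:TorusSingular} plus Proposition \ref{prop:SM=PRConnected}) so that Corollary \ref{cor:SympFormSmooth} gives the symplectic form on the smooth locus, and then invoking Namikawa's converse (Theorem \ref{thrm:NamikawaRatGoren}) once graded Gorensteinness is known from \cite[Theorem 1.3 and Corollary 1.8]{HerbigHerdenSeaton}. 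None of these ingredients is replaced by anything in your plan.

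Second, your treatment of the non-$1$-large (non-stable) regime by staged reduction along a subtorus $K'\subset K$ with $(V,K')$ $1$-large is both unsubstantiated and, in the critical examples, circular: for $V=\C$ with $K=S^1$ acting by weight $1$ the only admissible $K'$ is trivial, so the ``second stage'' is the original problem, and in general the second reduction would be a torus quotient of a singular variety to which none of the cited linear-representation results apply, while the compatibility with the real radical is precisely the difficulty you defer. The paper's key device here is different and is what you are missing: Wehlau's lemma \cite[Lemma 2]{WehlauPopov} (Lemma \ref{lem:TorusStableRed}) shrinks the \emph{module}, not the group --- the coordinates vanishing on the real shell appear in no invariant, so one passes to a submodule $V'$ with quotient torus acting stably and faithfully without changing $M$, $\R[M_0]$, or the complex symplectic quotient of Definition \ref{def:CSympQuot}. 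Finally, a smaller gap: graded Gorensteinness does not formally descend to the $\Gamma$-quotient via Watanabe's theorem \cite{WatanabeGorenstein}, which concerns linear actions in $\SL$ on a vector space; the paper instead checks that the explicit generator of the canonical module built in the proof of Theorem \ref{thrm:GradeGoren} is invariant under the full group because the determinant of the action on $\mathfrak{g}^\ast$ cancels against that on $\mathfrak{g}$, so the $a$-invariant is unchanged. You would need this (or an equivalent) argument to close that step.
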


Note that in the case that $(V,K)$ is unimodular,
meaning that all maximal minors of
the weight matrix of the action are $\pm 1$ or $0$,
Theorem \ref{thrm:TorusMain}
was established in \cite[Proposition 4.11]{BellamyKuwabara}.
See also \cite[Section 3]{Bulois}, where it is demonstrated that when $G$ is a torus,
the alternate definition given in Equation \eqref{eq:defCSympQuotComplexGeometric}
yields complex symplectic quotients
that are in general reduced and normal.

Similarly, for $G = \SL_2(\C)$,
we will indicate that Theorem \ref{thrm:GeneralMain} applies as well when $(V,G)$ is
$2$-large. The few $G$-modules that fail to be $2$-large have been determined
in \cite[Theorem 11.9]{GWSlifting}, and many of
the corresponding symplectic quotients
were found to be isomorphic to linear symplectic
orbifolds in \cite[Theorem 1.6]{HerbigSchwarzSeaton}. Considering the remaining few, we obtain the following.

\begin{theorem}
\label{thrm:SU2Main}
Let $K = \SU_2$ and let $(V,K)$ be unitary. Then the
the corresponding (modified) complex symplectic quotient of Definition \ref{def:CSympQuot}
has symplectic singularities and is graded Gorenstein. In particular,
the complex symplectic quotient has rational singularities.
\end{theorem}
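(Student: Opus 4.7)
The plan is to stratify $(V,K)$ according to how $k$-large the complexified module $(V,G)$ is, with $G=K_\C=\SL_2(\C)$, and then handle a small explicit list of exceptions. If $(V,G)$ is $3$-large, Theorem \ref{thrm:GeneralMain} gives the conclusion directly. The paragraph following Theorem \ref{thrm:GeneralMain} announces that the same conclusion holds for $2$-large $(V,G)$ under mild supplementary hypotheses; I would verify that these are automatically satisfied when $G=\SL_2(\C)$, exploiting that the irreducibles are the symmetric powers $S^n\C^2$, that dimensions of strata in $V\git G$ are pinned down by classical invariant theory, and that principal isotropy groups for $\SL_2$-modules are well understood. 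This reduces the proof to the finite list of unitary $\SU_2$-modules that fail to be $2$-large, classified in \cite[Theorem 11.9]{GWSlifting}.

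For each module on that list, I would first ask whether the associated real symplectic quotient has been identified as graded regularly symplectomorphic to a linear symplectic orbifold. By \cite[Theorem 1.6]{HerbigSchwarzSeaton} many of them have; for such cases, \cite[Proposition 2.4]{Beauville} supplies the symplectic singularities and \cite{WatanabeGorenstein} (see \cite[Theorem 7.1]{StanleyInvarFinGrp}) supplies the graded Gorenstein property. Since the orbifold isomorphism is graded regular, both properties pass to the complexification $\Spec(\R[M_0]\otimes_\R\C)$ used in Definition \ref{def:CSympQuot}.

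There then remains a handful of $\SU_2$-modules that are neither $2$-large nor realized as linear symplectic orbifolds; these must be treated by direct computation. For each such module, my plan is (i) to write down generators and relations for the coordinate ring of the modified complex symplectic quotient using classical $\SL_2$ invariant theory and the transvectant calculus, taking the real radical of $(J)$ before complexifying as prescribed by Definition \ref{def:CSympQuot}; (ii) to confirm the graded Gorenstein property by computing the Hilbert series of $\C[\mu^{-1}(0)\git G]$ and checking the symplectic functional equation of \cite[Corollary 1.8]{HerbigHerdenSeaton}, in the spirit of the computations in \cite[Section 8.3]{HerbigSeaton}; and (iii) to establish symplectic singularities by exhibiting the reduced symplectic form on the smooth locus obtained by Poisson reduction of the canonical form on $V\oplus V^*$ and checking that its pullback to any resolution extends regularly, via a local analysis near each stratum of the singular locus.

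The main obstacle will be step (iii) of the case-by-case analysis. Outside the $3$-large range one loses the uniform machinery underlying Theorem \ref{thrm:GeneralMain}, and outside the orbifold cases one also loses the finite-group shortcut provided by \cite{WatanabeGorenstein}, so the extension-to-a-resolution has to be verified by hand for each exceptional module. Additional care is needed because the three definitions of the complex symplectic quotient discussed in Section \ref{subsec:BackSympQuot} genuinely disagree in these non-$1$-large cases, so all computations must be carried out for the modified quotient of Definition \ref{def:CSympQuot} specifically; in particular, one must check that taking the real radical before complexifying does not destroy the symplectic form on the smooth locus.
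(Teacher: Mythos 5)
Your outline follows the paper's architecture quite closely: reduce to the $2$-large case via Theorem \ref{thrm:GeneralMain} (the paper checks $(N\git G)_{\pr}=(N\git G)_{\sm}$ for $\SL_2(\C)$ using Corollary \ref{cor:ShellSliceProperties}, Lemma \ref{lem:TorusSingular} and Proposition \ref{prop:SM=PRConnected}), quote the classification of non-$2$-large $\SL_2(\C)$-modules from \cite[Theorem 11.9]{GWSlifting}, dispose of the orbifold cases via \cite[Theorem 1.6]{HerbigSchwarzSeaton} together with Theorem \ref{thrm:Orbifolds}, and then treat the residual modules ($R_1^{\oplus 3}$, $R_2^{\oplus 2}$, $R_2\oplus R_1$) by hand. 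The genuine divergence is in your step (iii). You propose to verify condition ({\it iii}) of Definition \ref{def:SymplecticSing} directly, by pulling the reduced form back to a resolution and checking regular extension stratum by stratum; the paper never touches a resolution. Instead it establishes that the shell (and hence, by Boutot, the quotient) has rational singularities --- via \cite[Theorem 5.1]{CapeHerbigSeaton} for $R_2^{\oplus 2}$, a codimension count plus Theorem \ref{thrm:FlennerCodim} for $R_1^{\oplus 3}$, and the Flenner--Watanabe $a$-invariant criterion as in Lemma \ref{lem:R2R1ShellRational} (or Becker's explicit presentation) for $R_2\oplus R_1$ --- and then invokes Namikawa's converse, Theorem \ref{thrm:NamikawaRatGoren}, with the symplectic form on the smooth locus supplied by Corollary \ref{cor:SympFormSmooth}; the graded Gorenstein property comes from Theorem \ref{thrm:GradeGoren} or the explicit Hilbert series, as in your step (ii). Your route is viable in principle but substantially harder to execute, since constructing explicit resolutions of these quotients and controlling the $2$-form near every stratum is exactly the work that the Flenner/Namikawa criteria are designed to avoid.

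One inaccuracy worth correcting: your closing concern that ``the three definitions genuinely disagree'' in the remaining hard cases is misplaced. All three residual modules $R_1^{\oplus 3}$, $R_2^{\oplus 2}$ and $R_2\oplus R_1$ are $1$-large, so by Lemma \ref{lem:R=CSympQuot} the modified quotient of Definition \ref{def:CSympQuot} coincides with $\Spec\big(\C[V\oplus V^\ast]^G/(\mu)^G\big)$ and $(\mu)$ is radical; the only exceptional modules where the definitions can differ ($R_1$, $R_1^{\oplus 2}$, and marginally $R_2$, where $(J)$ is in fact real) are precisely those already handled by the orbifold isomorphism, where one complexifies $\R[M_0]$ directly. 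So no extra care about real radicals is needed in the case-by-case computations.
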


The outline of this paper is as follows.
In Section \ref{sec:Background}, we recall the relevant background information we need.
Section \ref{subsec:BackKLarge} gives definitions of the hypotheses for $G$-modules that
will play a role in this work, Section \ref{subsec:BackSympQuot} explains our definition
of the real and complex symplectic quotients, and Section \ref{subsec:BackSympSing} recalls
the background and results on symplectic and Gorenstein singularities. In Section \ref{sec:AuxResults}
we establish several auxiliary results we will need about the structure of
$\mu^{-1}(0)$ and
the properties of $G$-modules. In particular,
in Section \ref{subsec:SympSlice} we give a symplectic slice theorem for complex
symplectic quotients
similar to that of \cite[Section 6]{BuloisLehnTerpereau} and use it to demonstrate that for
many $G$-modules, the complex symplectic quotient inherits a symplectic structure on
its smooth locus; see Corollary \ref{cor:SympFormSmooth}. We then prove Theorem \ref{thrm:GeneralMain}
in Section \ref{sec:GeneralMain}. In Section \ref{sec:Torus}, we consider the case of $G^0$ a torus
and prove Theorem \ref{thrm:TorusMain}, while in Section \ref{sec:SU2}, we consider the case of
$K = \SU_2$ and prove Theorem \ref{thrm:SU2Main}.

% xxxxxxxxxxxxxxxxxxxxxxxxxxxxxxxxxxxxxxxxxxxxxxxxxxxxxxxxxxxxxxxxxxxxxxxxx
% xxxxxxxxxxxxxxxxxxxxxxxxxxxxxxxxxxxxxxxxxxxxxxxxxxxxxxxxxxxxxxxxxxxxxxxxx
% xxxxxxxxxxxxxxxxxxxxxxxxxxxxxxxxxxxxxxxxxxxxxxxxxxxxxxxxxxxxxxxxxxxxxxxxx

\section*{Acknowledgements}

The authors would like to thank Arnaud Beauville,
William Fulton, and Hanspeter Kraft for helpful responses to our questions
as well as Micha\"{e}l Bulois for helpful comments and references and pointing out an error in
an earlier draft of this paper.
We would also like to thank the referees for a careful reading of the manuscript and for
many suggestions that improved this paper.

% xxxxxxxxxxxxxxxxxxxxxxxxxxxxxxxxxxxxxxxxxxxxxxxxxxxxxxxxxxxxxxxxxxxxxxxxx
% xxxxxxxxxxxxxxxxxxxxxxxxxxxxxxxxxxxxxxxxxxxxxxxxxxxxxxxxxxxxxxxxxxxxxxxxx
% xxxxxxxxxxxxxxxxxxxxxxxxxxxxxxxxxxxxxxxxxxxxxxxxxxxxxxxxxxxxxxxxxxxxxxxxx

\section{Background and definitions}
\label{sec:Background}

% xxxxxxxxxxxxxxxxxxxxxxxxxxxxxxxxxxxxxxxxxxxxxxxxxxxxxxxxxxxxxxxxxxxxxxxxx

\subsection{Properties of $(V,G)$}
\label{subsec:BackKLarge}

Throughout this paper, $K$ will denote a compact Lie group and $G = K_{\C}$
its complexification, i.e., $G$ is a complex reductive group and $K$
is a maximal compact subgroup of $G$. Let $V$ be a $G$-module. The
categorical quotient
$V\git G = \Spec\big( \C[V]^G\big)$ is an affine variety that parameterizes the closed
orbits in $G$. If $Gv$ is a closed orbit, then the isotropy group $G_v$ is reductive by
Matsushima's theorem \cite{Matsushima,LunaClosed}, and the \emph{isotropy type\/} of the  orbit $Gv$ is the conjugacy class of $G_v$. The variety $V\git G$ is stratified by isotropy type \cite{LunaSlice}, and there is a unique open
stratum $(V\git G)_{\pr}$ called the \emph{principal stratum}; the isotropy groups of points
in the corresponding closed orbits are called \emph{principal isotropy groups}. We let
$V_{\pr} = \pi^{-1}((V\git G)_{\pr})$ where $\pi\co V\to V\git G$ denotes the orbit map, the map dual to the inclusion
$\C[V]^G\subset\C[V]$.
We say that a subset of $V$ is \emph{$G$-saturated\/} if it is a union of fibers of $\pi$.

\begin{definition}
\label{def:KLarge}
Let $V$ be a $G$-module. We say that $(V,G)$ has \emph{FPIG} if the principal isotropy groups are finite
and \emph{TPIG} if they are trivial. We say that  $(V,G)$ is \emph{stable} if there is an open set of closed
orbits; equivalently, if $V_{\pr}$ consists of closed orbits. For $k\geq 1$, we say that $(V,G)$ is
\emph{$k$-principal} if $\codim_\C V\smallsetminus V_{\pr} \geq k$.
Letting $V_{(r)}$ denote the locally closed set of points with isotropy group of complex dimension $r$,
$(V,G)$ is \emph{$k$-modular} if $\codim_\C V_{(r)}\geq r+k$ for $1\leq r\leq \dim_\C G$.
Note in particular that if $(V,G)$ is $k$-modular for $k\geq 0$, then $V_{(0)}\neq\emptyset$.
Finally, $(V,G)$ is \emph{$k$-large} if it is $k$-principal, $k$-modular, and has FPIG.
\end{definition}

See \cite{GWSlifting} for more background on these concepts, and note that references
sometimes differ about whether FPIG is required as part of the definition of
$k$-modular or $k$-principal.

Applying the definition of $k$-modular to $V^G$, we have the following.

\begin{lemma}
\label{lem:dimVminusdimG}
Let $V$ be a $k$-modular $G$-module where $G$ is not finite and  $k\geq 0$. Then $\dim_\C G+k\leq\dim_\C V-\dim_\C V^G$.
\end{lemma}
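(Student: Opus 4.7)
The plan is to apply the $k$-modular hypothesis at the single value $r = \dim_\C G$, for which the set $V_{(r)}$ automatically contains the fixed-point subspace $V^G$, yielding the claimed inequality immediately.

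First I would observe that the hypothesis that $G$ is not finite means $\dim_\C G \geq 1$, so $r = \dim_\C G$ is a permitted value of $r$ in the range $1 \leq r \leq \dim_\C G$ appearing in the definition of $k$-modularity. Next, for any $v \in V^G$ the isotropy group $G_v$ equals all of $G$ and in particular has complex dimension $\dim_\C G$. This gives the containment $V^G \subseteq V_{(\dim_\C G)}$, and consequently
\[
\dim_\C V_{(\dim_\C G)} \;\geq\; \dim_\C V^G,
\qquad
\codim_\C V_{(\dim_\C G)} \;\leq\; \dim_\C V - \dim_\C V^G.
\]

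Now I would invoke the $k$-modular condition at $r = \dim_\C G$, which yields
\[
\codim_\C V_{(\dim_\C G)} \;\geq\; \dim_\C G + k.
\]
Chaining this with the previous inequality produces the desired bound $\dim_\C G + k \leq \dim_\C V - \dim_\C V^G$.

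There is no real obstacle here; the only subtlety worth noting is that the inequality uses only a single instance of the $k$-modular condition (namely at $r = \dim_\C G$), and that the argument genuinely requires the hypothesis $\dim_\C G \geq 1$ so that this value of $r$ lies in the allowed range. If $G$ were finite, $V_{(\dim_\C G)} = V_{(0)}$ would not be controlled by the $k$-modular hypothesis and the conclusion could fail.
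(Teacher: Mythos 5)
Your proof is correct and is essentially the paper's argument: both observe $V^G \subseteq V_{(\dim_\C G)}$ and apply the $k$-modular bound at the single value $r = \dim_\C G$, which requires $\dim_\C G \geq 1$. Your additional remark about why finiteness of $G$ must be excluded is a nice clarification but does not change the substance.
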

\begin{proof}
As $V^G \subset V_{(\dim_\C G)}$, we have $\dim_\C V^G \leq \dim_\C V_{(\dim_\C G)} \leq \dim_\C V - \dim_\C G - k$.
\end{proof}

% xxxxxxxxxxxxxxxxxxxxxxxxxxxxxxxxxxxxxxxxxxxxxxxxxxxxxxxxxxxxxxxxxxxxxxxxx

\subsection{Real and complex symplectic quotients}
\label{subsec:BackSympQuot}

For a unitary $K$-module $V$ with $\dim_\C V = n$,
let $z_1,\ldots,z_n$ be a choice of unitary coordinates for $V$.
Considering $V$ with its underlying structure as a $2n$-dimensional
real vector space, we equip $V$ with the symplectic form
\[
    \omega = \omega_V
    = \frac{\sqrt{-1}}{2} \sum_{j=1}^n dz_j \wedge d\overline{z}_j,
\]
and then the corresponding Poisson bracket on $\mathcal{C}^\infty(V)$
satisfies
\begin{equation}
\label{eq:PoissonR}
    \{ z_i, \overline{z}_j\} = \frac{2}{\sqrt{-1}} \delta_{i,j},
    \quad\quad
    \{ z_i, z_j\} = \{ \overline{z}_i, \overline{z}_j\} = 0.
\end{equation}
The action of $K$ on $V$ is Hamiltonian with moment map
$J\co V\to\mathfrak{k}^\ast$ defined by
\[
    \langle J(v), A\rangle = \frac{\sqrt{-1}}{2}\langle v, A v \rangle,
    \quad\quad v\in V, A\in\mathfrak{k}.
\]
Choosing a basis $A_1,\ldots,A_m$ for $\mathfrak{k}$, the
components $J_i:= \langle J(\cdot), A_i\rangle$ are homogeneous quadratic
elements of the algebra $\R[V]$ of real regular functions on $V$. We let
$(J)$ denote the ideal generated by the $J_i$ in $\R[V]$ and let $M$
denote the  zero set of $J$. Let $\R[V]^K$ denote the $K$-invariant polynomials on $V$,
a finitely generated graded algebra
\cite[Chapter 8, Section 14]{Weyl}, and let $(J)^K := (J)\cap \R[V]^K$ denote
the invariant part of $(J)$.

\begin{definition}
\label{def:RSympQuot}
Let $K$ be a compact Lie group and $V$ a unitary $K$-module. Let $J\co V\to\mathfrak{k}^\ast$
denote the associated homogeneous quadratic moment map. The \emph{(real) shell   associated to
$(V,K)$} is the set $M$ with ideal $\sqrt[\R]{(J)}$, the real radical of $J$, which is the same thing as the ideal of $\R[V]$ vanishing on $M$. The \emph{real symplectic quotient associated to
$(V,K)$} is the quotient $M_0 := M/K$. Thus the \emph{Poisson algebra of real regular functions on the real symplectic
quotient} is defined by
\[
    \R[M_0] := \R[V]^K/\big(\sqrt[\R]{(J)}\big)^K
\]
where $\big(\sqrt[\R]{(J)}\big)^K := \sqrt[\R]{(J)}\cap\R[V]^K$.
\end{definition}

Note that as $J$ is equivariant, the Poisson bracket on $\R[V]$ induces a well-defined Poisson
bracket on $\R[M_0]$. The real symplectic quotient is a symplectic stratified space \cite{SjamaarLerman}
and is a semialgebraic subset of the semialgebraic set $V/K$
\cite{ProcesiSchwarz}. See also
\cite{ArmsGotayJennings,FarHerSea,HerbigSchwarz,HerbigSeaton} for background
on real symplectic quotients and their algebra of real regular functions.

\begin{remark}
\label{rem:RSympQuotRadical}
It is frequently the case that $J$ generates a \emph{real  ideal\/} of $\R[V]$, i.e.,  $\sqrt[\R]{(J)} = (J)$.
Recalling that $G = K_{\C}$, this is in particular true if $(V,G)$ is $1$-large by \cite[Corollary 4.3]{HerbigSchwarz}.
Even if this condition fails, it can happen that   the invariant part of the real radical of $(J)$ is the
invariant part of $(J)$, i.e. $\big(\sqrt[\R]{(J)}\big)^K = (J)^K$; see Example \ref{ex:HarmOscRC} below.
In these cases, we of course have
\[
    \R[M_0] = \R[V]^K/(J)^K.
\]
\end{remark}

The action of $K$ on $V$ extends to an action of $G$ on $V$.
We complexify the real vector space $V$ to form $V_{\C}:= V\otimes_{\R}\C$,
which is isomorphic   as a $K$-module  to $V\oplus V^\ast$, and we take the $G$-action to be the obvious one on $V\oplus V^\ast$. Letting $(w_1,\ldots,w_n)$ denote coordinates dual to $(z_1,\ldots,z_n)$,
we equip $V\oplus V^\ast$ with the complex symplectic form
\[
    \omega = \omega_{V_{\C}}= \sum\limits_{j=1}^n dz_j \wedge dw_j.
\]
The action of $G$ on $V\oplus V^\ast$ is Hamiltonian with moment map
$\mu = J\otimes_{\R}\C\co V\oplus V^\ast\to \mathfrak{g}^\ast$,
the complexification of the real moment map,
which is given by $\mu(v,\xi)(A) = \xi(A(v))$ for $A \in\mathfrak{g}$ and
$(v,\xi)\in V\oplus V^\ast$, see \cite[Section 4]{HerbigSchwarz}.
The \emph{(complex) shell $N$ associated to $(V,G)$} is the
subscheme of $V\oplus V^\ast$ associated to
the ideal $(\mu)$ generated by the components of $\mu$. With respect to a  basis for
$\mathfrak{g}$, we let $\mu_i$ denote the corresponding coordinates of $\mu$,
and then the $\mu_i$ are homogeneous quadratic elements of the regular functions
$\C[V\oplus V^\ast]$. Note that if the chosen basis consists of real vectors
so that it is also a basis for $\mathfrak{k}$, then $\mu_i = J_i\otimes_{\R}\C$
for each $i$.
We define the complex symplectic quotient to be the complexification of
$M_0$ as follows.

\begin{definition}
\label{def:CSympQuot}
Let $K$ be a compact Lie group, $V$ a unitary $K$-module and $G=K_\C$. Let
$\mu\co V\oplus V^\ast\to\mathfrak{g}^\ast$ denote the moment map associated to the Hamiltonian action
of $G$ on $V\oplus V^\ast$ with respect to its standard symplectic structure, i.e., $\mu = J\otimes_{\R}\C$.
The \emph{complex symplectic quotient associated to $(V,G)$} is
the complex algebraic variety
\begin{equation}
\label{eq:defCSympQuot}
    \Spec \big( \R[M_0]\otimes_{\R}\C\big).
\end{equation}
The Poisson bracket on $\R[M_0]\otimes_{\R}\C$ is that inherited from the bracket on
$\C[V\oplus V^\ast]$
so that
\begin{equation}
\label{eq:PoissonC}
    \{ z_i, w_j\} = \frac{2}{\sqrt{-1}} \delta_{i,j},
    \quad\quad
    \{ z_i, z_j\} = \{ w_i, w_j\} = 0
\end{equation}
where the $z_i$ are coordinates for $V$ and $w_i$ the dual coordinates for $V^\ast$.
\end{definition}

This definition of complex symplectic quotient is not standard. Some authors consider the
alternative quotient
\begin{equation}
\label{eq:defCSympQuotAlgebraic}
    \Spec \big(\C[V\oplus V^\ast]^G/(\mu)^G\big),
\end{equation}
while others compute the radical in the complex sense and consider
\begin{equation}
\label{eq:defCSympQuotComplexGeometric}
      N\git G = \Spec \Big(\C[V\oplus V^\ast]^G/\big(\sqrt{(\mu)}\big)^G \Big).
\end{equation}
Definition \ref{def:CSympQuot}, on the other hand, corresponds to computing the real
symplectic reduction and then complexifying. More precisely, using
\cite[Proposition 5.8(1)]{GWSliftingHomotopies}, the variety of Equation \eqref{eq:defCSympQuot} is
\[
    N^\prime\git G =
    \Spec \Big(\C[V\oplus V^\ast]^G/\big(\sqrt[\R]{(J)}\otimes_{\R}\C\big)^G\Big),
\]
where $N^\prime$ is the variety of $\sqrt[\R]{(J)}\otimes_{\R}\C$ in $V\oplus V^\ast$,
the complexification of the real shell.
Note that in general, $\sqrt{(\mu)} \subset \sqrt[\R]{(J)}\otimes_{\R}\C$, and this inclusion may
be strict; see Example \ref{ex:SL2-2R1}. It follows that $\sqrt{(\mu)}^G \subset \sqrt[\R]{(J)}^G\otimes_{\R}\C$.
It can happen that $\sqrt{(\mu)} \neq \sqrt[\R]{(J)}\otimes_{\R}\C$ while
$\sqrt{(\mu)}^G = \sqrt[\R]{(J)}^G\otimes_{\R}\C$; see Examples \ref{ex:SL2-R1} and \ref{ex:HarmOscRC}
below.

Our motivation for using Definition \ref{def:CSympQuot} is two-fold. First, we are primarily interested
in using the complex symplectic quotient to study properties of the real quotient $M_0$
and its ring $\R[M_0]$ of real regular functions. Secondly, there are cases where
the quotients defined by Equations \eqref{eq:defCSympQuotAlgebraic}
and \eqref{eq:defCSympQuotComplexGeometric} can be pathological in ways that do not
appear to occur for complex symplectic quotients as defined in Definition \ref{def:CSympQuot}.
For example, Theorem \ref{thrm:SU2Main} fails using either quotient given in Equations
\eqref{eq:defCSympQuotAlgebraic} and \eqref{eq:defCSympQuotComplexGeometric}. We illustrate
this fact and the differences between these three definitions with the following.

\begin{example}
\label{ex:SL2-R1}
Let $G = \SL_2(\C)$, and let $V = R_1 = \C^2$ denote the defining representation of $G$ on binary forms of degree $1$.
In standard coordinates $(z_1, z_2, w_1, w_2)$ for $V\oplus V^\ast$, the components of the moment map are given by
$\mu_1 = z_1 w_2$, $\mu_2 = z_2 w_1$, and $\mu_3 = z_1 w_1 - z_2 w_2$.
In this case, $(\mu)$ is not radical and hence $(J)$ is not real.
However, the invariant part of the radical of $(\mu)$ is the
augmentation ideal in the invariant ring
$\C[V\oplus V^\ast]^{\SL_2(\C)}$,
and the invariant part of the real radical of $(J)$ is the
augmentation ideal in $\R[V]^K$. Hence,
$\R[M_0] = \R[V]^K/\big(\sqrt[\R]{(J)}\big)^K = \R$ and $\R[M_0]\otimes_{\R}\C = \C$ so that
the real and complex symplectic quotients are both points.
See \cite[Example 7.12]{ArmsGotayJennings} for a careful discussion.

In \cite[Section 3]{Becker}, Becker considers the complex symplectic quotient associated to
\linebreak
$(R_1, \SL_2(\C))$ using the definition of Equation \eqref{eq:defCSympQuotAlgebraic}. It is demonstrated that
this quotient is not a reduced scheme and hence \emph{not} a symplectic variety. In particular,
\[
    [\C[V\oplus V^\ast]/(\mu)]^{\SL_2(\C)}
    =   \C[V\oplus V^\ast]^{\SL_2(\C)}/(\mu)^{\SL_2(\C)}
    =   \C[z]/(z^2).
\]
Becker corrects for this by equipping the quotient $\Spec\big(\C[z]/(z^2)\big)$ with its reduced
structure, yielding a point. Using the quotient in Equation \eqref{eq:defCSympQuotComplexGeometric},
one obtains
\[
    [\C[V\oplus V^\ast]/\sqrt{(\mu)}]^{\SL_2(\C)}
    =   \C[V\oplus V^\ast]^{\SL_2(\C)}/\big(\sqrt{(\mu)}\big)^{\SL_2(\C)}
    =   \C,
\]
which also yields a point and coincides with our Definition \ref{def:CSympQuot} of the complex
symplectic quotient in this case.
\end{example}

\begin{example}
\label{ex:SL2-2R1}
Let $G = \SL_2(\C)$ and
$V = R_1^{\oplus 2}$ using the notation of Example \ref{ex:SL2-R1}.
The ideal $(\mu)$ is radical, but the ideal $(J)$ is not real;
see \cite[Example 7.13]{ArmsGotayJennings}. In that reference, the real radical
$\sqrt[\R]{(J)}$ of $(J)$ is computed, and it is shown
(\cite[Examples 5.11(a) and 7.6]{ArmsGotayJennings}) that the corresponding real
symplectic quotient $M_0$ is isomorphic to the orbifold $\C/(\pm 1)$,
and hence the complex symplectic quotient is given by $\C[p_1,p_2,p_3]/(p_1 p_2 - p_3^2)$.
This can also be demonstrated using the fact that, for
$V = R_1^{\oplus 2}$, the quotient
$V\git\SL_2(\C)$ is $1$-dimensional over $\C$ so that $V$ is polar; see
\cite[Section 5]{HerbigSchwarzSeaton}.

In this case, however, because $(\mu)$ is radical, the quotient defined in
Equation \eqref{eq:defCSympQuotAlgebraic} is isomorphic to the quotient defined in
Equation \eqref{eq:defCSympQuotComplexGeometric} yet does not coincide
with our Definition \ref{def:CSympQuot} of the complex symplectic quotient. Moreover,
these alternative quotients satisfy none of the consequences of
Theorem \ref{thrm:GeneralMain}. In particular, using the explicit description of the moment
map and invariants given in \cite[Example 7.13]{ArmsGotayJennings}, one computes that
$\C[V\oplus V^\ast]^{\SL_2(\C)}/(\mu)^{\SL_2(\C)}$ is generated by six quadratic polynomials
$\sigma_i$, $i=1,\ldots,6$ subject to the
eleven relations
\begin{align*}
    &\sigma_3\sigma_6,\quad
    \sigma_2\sigma_6,\quad
    \sigma_1\sigma_6,\quad
    \sigma_3\sigma_5,\quad
    \sigma_2\sigma_5,\quad
    \sigma_1\sigma_5,
    \\&
    \sigma_4^2+\sigma_5^2+\sigma_6^2,\quad
    \sigma_3\sigma_4,\quad
    \sigma_2\sigma_4,\quad
    \sigma_1\sigma_4,\quad
    \sigma_1^2-\sigma_2^2-\sigma_3^2.
\end{align*}
This ring has depth $1$ and Krull dimension $2$, so the alternate quotient
fails even to be Cohen-Macaulay.
See \cite[Example 8.7(2)]{BuloisLehnTerpereau}.
\end{example}

We would like to stress, however, that these three notions of complex symplectic quotients do coincide
for almost all representations. Specifically, we have the following.

\begin{lemma}
\label{lem:R=CSympQuot}
Let $(V,K)$ be unitary and let $G = K_{\C}$. If $J$ generates a real   ideal of $\R[V]$, then
\[
    \R[M_0]\otimes_{\R}\C
    \simeq  \C[V\oplus V^\ast]^G/\big(\sqrt{\mu}\big)^G
    =       \C[V\oplus V^\ast]^G/(\mu)^G.
\]
 In particular, this holds if $(V,G)$ is $1$-large.
\end{lemma}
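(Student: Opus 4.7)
The plan is to unpack the definitions, commute complexification past $K$-invariants and quotients by ideals, and then prove that $(\mu)$ is automatically radical when $(J)$ is real. Since $\sqrt[\R]{(J)} = (J)$ by hypothesis, Definition \ref{def:RSympQuot} simplifies to $\R[M_0] = \R[V]^K / (J)^K$.

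Next I would tensor this identity over $\R$ with $\C$. Two facts make the tensoring compatible with everything in sight: the functor $(-) \otimes_\R \C$ is exact and commutes with $(-)^K$ (Reynolds operator for the compact group $K$); and the natural identifications $\R[V] \otimes_\R \C \cong \C[V \oplus V^\ast]$ and $\C[V \oplus V^\ast]^K = \C[V \oplus V^\ast]^G$ (the latter because $G = K_\C$ acts algebraically) send $\R[V]^K$ to $\C[V \oplus V^\ast]^G$ and $(J)^K$ to $(\mu)^G$. This is also the specialization of \cite[Proposition 5.8(1)]{GWSliftingHomotopies} to the case $\sqrt[\R]{(J)} = (J)$. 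The outcome is $\R[M_0] \otimes_\R \C \cong \C[V \oplus V^\ast]^G / (\mu)^G$.

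The remaining step is to show $(\mu) = \sqrt{(\mu)}$, which immediately yields $(\mu)^G = \sqrt{(\mu)}^G$, and this is where the hypothesis is genuinely used. Real radicality of $(J)$ forces $R := \R[V]/(J)$ to be not only reduced but also \emph{formally real} (a sum of squares vanishes only trivially, since $f_j^{2} + \sum_{i \neq j} f_i^2 \in (J)$ is a witness placing each $f_j$ in the real radical $(J)$). A nilpotent $g + ih \in R \otimes_\R \C$ would satisfy $(g-ih)^n = 0$ by conjugation, hence $(g^2 + h^2)^n = 0$ in $R$; reducedness forces $g^2 + h^2 = 0$, and formal reality then forces $g = h = 0$. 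So $\C[V \oplus V^\ast]/(\mu) \cong R \otimes_\R \C$ is reduced, i.e.\ $(\mu)$ is radical. The ``in particular'' claim for $1$-large $(V,G)$ is immediate from \cite[Corollary 4.3]{HerbigSchwarz}. The main obstacle is precisely this radicality step, i.e.\ the passage from real reducedness over $\R$ to honest reducedness over $\C$; the other ingredients are formal given reductivity of $K$.
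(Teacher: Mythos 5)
Your proposal is correct and takes essentially the same route as the paper's proof: reduce $\R[M_0]$ to $\R[V]^K/(J)^K$ using realness of $(J)$, complexify via the identification $\C[V\oplus V^\ast]^G=\C[V\oplus V^\ast]^K\simeq\R[V]^K\otimes_\R\C$ of \cite[Proposition 5.8(1)]{GWSliftingHomotopies}, and conclude via radicality of $(\mu)$ that the two complex quotients coincide, with the $1$-large case handled by \cite[Corollary 4.3]{HerbigSchwarz}. The only difference is that where the paper cites \cite[Theorem 6.5]{ArmsGotayJennings} for the implication that a real ideal $(J)$ yields a radical $(\mu)$, you prove it inline via formal reality of $\R[V]/(J)$ and the conjugation trick $(g+ih)(g-ih)=g^2+h^2$, which is a sound, self-contained substitute.
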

\begin{proof}
By \cite[Proposition 5.8(1)]{GWSliftingHomotopies}, we have
\[
    \C[V\oplus V^\ast]^G
    =       \C[V\oplus V^\ast]^K
    \simeq  \R[V]^K \otimes_{\R} \C.
\]
If $J$ generates a real ideal in $\R[V]$, then $\mu = J\otimes_{\R}\C$ generates a radical
ideal in
$\C[V\oplus V^\ast]$ by \cite[Theorem 6.5]{ArmsGotayJennings}. Since $(\mu)$ is $G$-stable, $(\mu)^G:= (\mu)\cap\C[V\oplus V^\ast]^G$ is radical in $\C[V\oplus V^\ast]^G$.
Hence\[
    \R[M_0]\otimes_{\R}\C
    \simeq  (\R[V]^K\otimes_{\R}\C)/\big((J)^K\otimes_{\R}\C\big)
    \simeq  \C[V\oplus V^\ast]^G/(\mu)^G.
\]
That the hypotheses hold when $(V,G)$ is $1$-large is a consequence of
\cite[Corollary 4.3]{HerbigSchwarz} as noted in Remark \ref{rem:RSympQuotRadical}.
\end{proof}

Even in the rare cases that $(V,G)$ fails to be $1$-large, it can happen that Equations
\eqref{eq:defCSympQuotAlgebraic} and \eqref{eq:defCSympQuotComplexGeometric} coincide
with our definition of the complex symplectic quotient. For instance, there are non-$1$-large
representations for which $J$ generates a real ideal, including the standard action of
$\OO_n(\C)$ or $\SO_n(\C)$ on $V = \C^n$ with $n \geq 3$; see \cite[Example 7.10]{ArmsGotayJennings}.
However, even if the ideal generated by $J$ fails to be real, it can happen that
$(J)^K = \big(\sqrt[\R]{(J)}\big)^K$, as illustrated by the following.

\begin{example}
\label{ex:HarmOscRC}
Let $V = \C$ with $G = \C^\times$-action given by multiplication. Then the only closed orbit
is $\{0\}$ so that $(V,G)$ fails to be stable
so cannot
be $1$-large. The $G$-action on $V\oplus V^\ast = \C^2$ has weight vector $(1,-1)$. In
real coordinates $(z,\overline{z})$ for $V$, the $K = \Sp^1$-moment map $J$ is, up to a
scalar,   $z\overline{z} = |z|^2$. The corresponding ideal $(J) = (z\overline{z})$ is clearly
not real, as its zero set is the origin and its real radical contains $z$ and $\overline{z}$.
On the other hand, using coordinates $(z,w)$ for $V\oplus V^\ast$, the complex moment map
$\mu = zw$ generates the radical ideal $(zw)$.

However, the invariants of this action are generated by $z\overline{z}$ in the real algebra
$\R[V]$ and $zw$ in the complex algebra
$\C[V\oplus V^\ast]$. Therefore,
\[
    (J)^K   =   \big(\sqrt[\R]{(J)}\big)^K  =   J\cdot\R[J],
\]
and
\[
    (\mu)^G   =   \big(\sqrt{(\mu)}\big)^G  =   \mu\cdot\C[\mu].
\]
Hence, similar to the case of Example \ref{ex:SL2-R1}, the real and complex symplectic quotients are both a point.
\end{example}

As another large family of examples, the closure $\overline{\mathcal{O}}$ of a
nilpotent coadjoint orbit $\mathcal{O}$ by one of the classical Lie groups
$\GL_n(\C)$, $\OO_n(\C)$, or $\SYMP_n(\C)$ has been shown by Kraft and Procesi
\cite{KraftProcesiConjClassNorm,KraftProcesiGeomConjClass}
to be isomorphic to what turns out to be a complex symplectic quotient using the
definition in Equation \eqref{eq:defCSympQuotComplexGeometric}; see
\cite[Section 4]{BrylinskiKPI}. Though $\overline{\mathcal{O}}$ may fail to be normal,
its normalization has symplectic singularities by \cite{PanyushevRatGorenNilOrb},
see also \cite{FuNilOrb}. Hence, the corresponding complex symplectic quotients
via Equation \eqref{eq:defCSympQuotAlgebraic} have
symplectic singularities if and only if they are normal. The non-normal cases
are associated to representations that must fail to be $2$-modular,
see Proposition \ref{prop:NormalityShell}({\it iii}); in some cases, the orbit closures are
reducible and hence correspond to representations that cannot be $1$-modular
by Proposition \ref{prop:NormalityShell}({\it ii}), see
\cite[14.2]{KraftProcesiGeomConjClass}. Therefore, the complex symplectic quotients given by
Equation \eqref{eq:defCSympQuotAlgebraic} and Definition \ref{def:CSympQuot} may not
coincide; whether they do has yet to be determined and will be considered elsewhere.

\begin{remark}
\label{rem:IsotropyRvsV}
If $(V,G)$ is $1$-large, then $M$ is Zariski dense in $N$ by \cite[Corollary 4.2]{HerbigSchwarz}.
If $(V,G)$ fails to be $1$-large, then it is possible that $\dim_{\R} M < \dim_{\C} N$
as illustrated in Example \ref{ex:HarmOscRC}.
If $H$ is the isotropy group of a $K$-orbit in $M$, then
$H_\C$ is the isotropy group of the corresponding closed $G$-orbit in $N$.
However, the conjugacy classes of such isotropy groups do not necessarily exhaust
those in $N$, as the following example shows.
\end{remark}

\begin{example}
\label{ex:IsotropyRvsV}
Let $K=\SU_n$, $G=\SL_n(\C)$ and
$V=(\C^n)^{\oplus 2n}$ with the diagonal action.
By \cite[Theorem 11.15]{GWSlifting}, $V$ is $2$-large.
The isotropy groups of $K$ acting on $M$ are just $\{e\}$ and $K$
since the isotropy groups of closed orbits of $G$ acting on $V$ are just
$\{e\}$ and $G$. Let $1\leq k\leq n-2$ and consider the point $x=((e_1,\dots,e_k,0,\dots,0),(0,\dots,0,e_1^\ast,\dots,e_k^\ast))\in V\oplus V^*$. Here the $e_i$ and $e_i^\ast$ are the usual basis and dual basis of $\C^n$. Then $x\in N$ lies on a closed $G$-orbit with isotropy group $\SL_{n-k}(\C)$.
\end{example}

% xxxxxxxxxxxxxxxxxxxxxxxxxxxxxxxxxxxxxxxxxxxxxxxxxxxxxxxxxxxxxxxxxxxxxxxxx

\subsection{Symplectic singularities and graded Gorenstein rings}
\label{subsec:BackSympSing}

Recall the following.

\begin{definition}[Beauville, \cite{Beauville}]
\label{def:SymplecticSing}
An algebraic variety $X$ over $\C$ has a \emph{symplectic singularity} at $p \in X$ if there
is an open neighborhood $U$ of $p$ (classical topology) such that
\begin{enumerate}
\item[({\it i})]   $U$ is normal,
\item[({\it ii})]   the smooth locus $U_{\sm}$ of $U$ admits a holomorphic symplectic (i.e., closed and
        non-degenerate) $2$-form $\omega$, and
\item[({\it iii})]   for any resolution $f\co Y \to U$ of singularities,
$(f|_{f^{-1}(U_{\sm})})^\ast\omega$
        extends to a holomorphic $2$-form on $Y$.
\end{enumerate}
We say that $X$ has \emph{symplectic singularities} or is a \emph{symplectic variety} if it
has a symplectic singularity at each point.
\end{definition}

Note that in condition ({\it iii}) of this definition, if there is a resolution $f\co Y\to U$ such that
$(f|_{f^{-1}(U_{\sm})})^\ast\omega$ extends to a symplectic form on $Y$, then $Y$ is called a
\emph{symplectic resolution} of $U$. Symplectic varieties need not admit symplectic resolutions.
See \cite{FuSurveySympSing} for a survey of symplectic singularities and resolutions.

As noted in \cite{Beauville}, the main theorem of \cite{FlennerExtend} implies that
condition ({\it iii}) of Definition \ref{def:SymplecticSing} is always satisfied
if the variety is smooth in codimension $3$; that is, we have the following.

\begin{theorem}[Flenner, \cite{FlennerExtend}]
\label{thrm:FlennerCodim}
Let $X$ be a normal variety such that $X\smallsetminus X_{\sm}$ has codimension at least $4$.
Then $X$ has symplectic singularities if and only if $X_{\sm}$ admits a holomorphic symplectic form.
\end{theorem}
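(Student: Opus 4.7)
The forward implication is essentially a tautology given Definition \ref{def:SymplecticSing}: if every point of $X$ has a symplectic singularity, then condition \emph{(ii)} produces a holomorphic symplectic form on $X_{\sm}$ (the local forms patch because on the overlap of two neighborhoods they are both pulled back from the same $2$-form on the smooth locus, up to the choice, and normality plus Hartogs allows one to show the symplectic forms agree on the smooth locus). The real content is the reverse direction, and my plan is to reduce it cleanly to the extension theorem of Flenner that gives the paper its name \cite{FlennerExtend}.

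For the reverse direction, assume $\omega$ is a holomorphic symplectic form on $X_{\sm}$. Condition \emph{(i)} of Definition \ref{def:SymplecticSing} holds by hypothesis and condition \emph{(ii)} holds by assumption, so only \emph{(iii)} needs verification. Let $f\co Y\to X$ be a resolution of singularities, set $U=X_{\sm}$ and $V=f^{-1}(U)$, and let $\eta:=(f|_V)^\ast\omega$, a holomorphic symplectic form on $V$. The task is to show $\eta$ extends to a holomorphic $2$-form on all of $Y$. Since $Y$ is smooth, extension across the exceptional locus $E=Y\smallsetminus V$ is a local, codimension-$1$ statement along each component of $E$; the form $\eta$ already extends in codimension one on $X$ because $X$ is normal and $X\smallsetminus X_{\sm}$ has codimension $\geq 2$.

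The extension across $E$ is precisely what Flenner's theorem guarantees. In the formulation needed here: if $X$ is a normal complex variety whose singular locus has codimension at least $p+2$, and $f\co Y\to X$ is any resolution, then every holomorphic $p$-form on $X_{\sm}$ pulls back to a holomorphic $p$-form on $Y$. Taking $p=2$ and invoking the hypothesis $\codim (X\smallsetminus X_{\sm})\geq 4$ produces the desired extension of $\eta$, completing the verification of condition \emph{(iii)}.

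The only real obstacle is Flenner's extension result itself, which I would treat as a black box. If one had to reprove it, the strategy would be depth theoretic: the pushforward $f_\ast\Omega_Y^p$ coincides, on a normal variety, with the reflexive hull $(\Omega_X^p)^{\vee\vee}=j_\ast\Omega_{X_{\sm}}^p$ where $j\co X_{\sm}\hookrightarrow X$, at every point where the depth of $X$ is at least $p+1$; a normal variety with singular locus of codimension $\geq p+2$ automatically has depth $\geq p+1$ along the relevant loci, so the reflexive hull actually agrees with $f_\ast\Omega_Y^p$ and every section of $\Omega_{X_{\sm}}^p$ globalizes across $E$. The codimension-$4$ bound in the statement is thus the exact threshold needed to move the symplectic ($p=2$) form past the exceptional divisors of any resolution.
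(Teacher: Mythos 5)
The paper offers no proof of this statement: it is quoted from the literature, with the one-line justification that (as Beauville notes in \cite{Beauville}) the main theorem of \cite{FlennerExtend} makes condition ({\it iii}) of Definition \ref{def:SymplecticSing} automatic when the singular locus has codimension at least $4$. Your treatment of the substantive (reverse) direction is exactly this reduction --- apply Flenner's extension theorem for $p$-forms with $p=2$ and $\codim(X\smallsetminus X_{\sm})\geq p+2$, treating it as a black box --- so on that direction you and the paper coincide; your closing depth-theoretic sketch of why Flenner's theorem holds is rougher than the actual proof (which needs duality and vanishing arguments, not just depth of $\mathcal{O}_X$), but since you explicitly black-box the theorem this is harmless.

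Two small criticisms. First, your parenthetical argument for the forward direction is circular: you assert the local forms ``are both pulled back from the same $2$-form on the smooth locus,'' but producing that single global form is precisely what is at issue. Under the paper's pointwise Definition \ref{def:SymplecticSing} the local symplectic forms on overlapping neighborhoods carry no compatibility, and local existence does not formally give a global holomorphic symplectic form on $X_{\sm}$ (any smooth compact even-dimensional variety with $H^0(\Omega^2)=0$ is locally symplectic but not globally so). The forward implication is genuinely a tautology only if one reads the symplectic-form condition globally, as in Beauville's original definition --- which is clearly the intended reading, and in any case the paper only ever uses the reverse direction. Second, the sentence claiming the extension across $E$ is ``a local, codimension-$1$ statement'' and that $\eta$ ``already extends in codimension one on $X$'' is confused (extending forms across exceptional divisors is exactly the nontrivial content of Flenner's theorem), but you immediately discard it in favor of the correct citation, so it does no damage.
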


Similarly, using the results of \cite[Proposition (1.2), Remark (1.3), and Theorem 7]{Reid},
Beauville demonstrates \cite[Proposition 1.3]{Beauville} that symplectic varieties are
Gorenstein with rational singularities. A converse is proven in \cite[Theorem 6]{NamikawaExtension}, yielding
the following.

\begin{theorem}[Namikawa, \cite{NamikawaExtension}]
\label{thrm:NamikawaRatGoren}
A normal variety $X$ has symplectic singularities if and only if it is
Gorenstein with
rational singularities and the smooth locus $X_{\sm}$ admits a holomorphic symplectic form.
\end{theorem}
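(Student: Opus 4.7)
The plan is to handle the two directions separately. The forward implication (symplectic singularities $\Rightarrow$ Gorenstein rational singularities together with a symplectic form on $X_{\sm}$) is essentially Beauville's \cite[Proposition 1.3]{Beauville} via Reid's results \cite{Reid}, and I would simply quote this. So assume $X$ is a normal Gorenstein variety of dimension $2n$ with rational singularities and a holomorphic symplectic form $\omega$ on $X_{\sm}$. I must verify condition (\textit{iii}) of Definition \ref{def:SymplecticSing}: for every resolution $f\colon Y \to X$, the pullback $f^\ast\omega$, a priori defined only on $f^{-1}(X_{\sm})$, extends to a holomorphic $2$-form on $Y$. By Hironaka I arrange the exceptional locus $E = \sum_i E_i$ to be simple normal crossings, and by Hartogs on the smooth variety $Y$ it suffices to show $f^\ast\omega$ has no pole along any $E_i$.

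The key algebraic input is Elkik's theorem: a normal Gorenstein variety has rational singularities if and only if it has canonical singularities, so in the discrepancy formula $K_Y = f^\ast K_X + \sum_i a_i E_i$ one has $a_i \geq 0$ for all $i$. Since $\omega^n$ trivializes $K_{X_{\sm}}$ and $K_X$ is an invertible sheaf on the normal variety $X$, reflexivity in codimension one extends $\omega^n$ to a nowhere-vanishing global section of $K_X$. Pulling back, $(f^\ast\omega)^n = f^\ast(\omega^n)$ is a regular section of $f^\ast K_X = K_Y(-\sum_i a_i E_i)$, so as a rational section of $K_Y$ it \emph{vanishes} to order $a_i \geq 0$ along each $E_i$, and in particular has no poles.

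The remaining step is to promote ``no pole of $(f^\ast\omega)^n$'' to ``no pole of $f^\ast\omega$'' itself, and this is the main technical obstacle. At a generic point of an irreducible $E_i$, I would pick local coordinates $(x_1,\dots,x_{2n})$ on $Y$ with $E_i = \{x_1 = 0\}$, write $f^\ast\omega = x_1^{-k}\alpha$ with $\alpha$ regular and not identically zero on $E_i$, and combine the vanishing bound $\mathrm{ord}_{E_i}\bigl((f^\ast\omega)^n\bigr) \geq a_i \geq 0$ from the previous paragraph with the non-degeneracy of $\omega$ on $f^{-1}(X_{\sm})$ to force $k \leq 0$. The delicate point is controlling possible cancellations in the Pfaffian expansion of $\alpha^n$ against the contribution from the discrepancy $a_i$; this is precisely where the canonical-singularity bound is doing work beyond the weaker claim that $\omega^n$ extends. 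Should the direct Pfaffian computation become intractable, a natural fallback is to invoke known extension theorems for reflexive differentials on canonical singularities (in the tradition of Steenbrink, Flenner, and Greb--Kebekus--Kov\'acs--Peternell) that give $f_\ast \Omega^2_Y \supseteq \Omega^{[2]}_X \ni \omega$ directly, which immediately yields the desired extension and completes the verification of condition (\textit{iii}).
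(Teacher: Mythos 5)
The paper does not actually prove this statement: the forward direction is quoted from \cite[Proposition 1.3]{Beauville} (via \cite{Reid}) and the converse from \cite[Theorem 6]{NamikawaExtension}, so your proposal is compared against Namikawa's argument. Your forward direction is fine, but the primary line of your converse has a genuine gap at exactly the point you flag. The discrepancy inequality $K_Y=f^\ast K_X+\sum_i a_iE_i$ with $a_i\geq 0$ (Elkik) only tells you that $(f^\ast\omega)^n$, as a rational section of $K_Y$, has nonnegative order along each $E_i$; this cannot be promoted to $\mathrm{ord}_{E_i}(f^\ast\omega)\geq 0$ by any local Pfaffian bookkeeping, because $\alpha^n$ can vanish to high order along $E_i$ even when $\alpha$ does not. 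Concretely, in dimension $4$ the form $x_1^{-1}\,dx_1\wedge dx_2+x_1^{N}\,dx_3\wedge dx_4$ is non-degenerate off $\{x_1=0\}$ and has a pole along $\{x_1=0\}$, yet its square $2x_1^{N-1}\,dx_1\wedge dx_2\wedge dx_3\wedge dx_4$ is regular for every $N\geq 1$. Both the non-degeneracy of $\omega$ on $f^{-1}(X_{\sm})$ and the canonical-singularity bound are consistent with this local picture, so the discrepancy bound does no work beyond controlling the top wedge power and nothing in your setup rules out $k>0$.

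What saves the statement is precisely your ``fallback,'' and it is not a fallback: the extension of reflexive $2$-forms across a resolution of a rational Gorenstein (equivalently canonical, index one) singularity \emph{is} the content of Namikawa's theorem. In \cite{NamikawaExtension} this is proved by cutting with generic hyperplane sections to reduce to isolated singularities, invoking Hodge-theoretic extension results of Steenbrink--van Straten there, and using Flenner's theorem (Theorem \ref{thrm:FlennerCodim} above) when the singular locus has codimension at least $4$; the later Greb--Kebekus--Kov\'{a}cs--Peternell extension theorem for klt spaces also yields it directly. So your argument becomes correct only once the Pfaffian step is discarded and replaced by an outright citation of such an extension theorem --- at which point it reduces to what the paper itself does, namely cite \cite{NamikawaExtension}.
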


Let $R$ be an $\mathbb{N}$-graded algebra
of dimension $d$ such that $R_0$ is equal to the ground field.
Recall \cite[Definition 2.1.2]{GotoWatanabe} that the \emph{canonical module} of $R$ is
defined to be the dual of the $d$th local cohomology group $\underline{H}_{\mathfrak{m}}^d(R)$
where $\mathfrak{m}$ denotes the maximal homogeneous ideal; see Section \ref{subsec:GradGoren}
for an explicit description. An algebraic variety $X$ is Cohen-Macaulay if and only if its
\emph{dualizing complex} is a sheaf, corresponding to the canonical module, and is Gorenstein if and only if this sheaf
is invertible, see \cite[Note 3.5.9, Proposition 3.5.12, and Definition 6.2.1]{Ishii} and
\cite[\href{https://stacks.math.columbia.edu/tag/0A7A}{Tag 0A7A},
\href{https://stacks.math.columbia.edu/tag/0DW4}{Tag 0DW4},
\href{https://stacks.math.columbia.edu/tag/0DW6}{Tag 0DW6}]{stacks-project}.
Recall further that $R$ is Gorenstein if and only if it is graded isomorphic to its canonical module,
possibly with a shift in the grading, see \cite[Section 21.11]{EisenbudComAlg} \cite[Proposition 2.1.3]{GotoWatanabe}.
This shift is given by $-(a(R) + d)$ where $a(R)$ is the \emph{$a$-invariant} of $R$
\cite[Definition 3.6.13]{BrunsHerzog}, \cite[Section 3]{GotoWatanabe}. If $a(R)$ is equal to the negative Krull dimension
of $R$, then we say that $R$ is \emph{graded Gorenstein}, also called
\emph{strongly Gorenstein} \cite[page 103]{DerskenKemperBook}.

By \cite[Theorem 4.4]{StanleyHilbFunGradAlg}, for a Cohen-Macaulay ring $R$,
the Gorenstein and graded Gorenstein properties can be determined from the Hilbert
series of $R$. Specifically, we have the following.

\begin{theorem}[Stanley, \cite{StanleyHilbFunGradAlg}]
\label{thrm:StanleyGorenHilb}
Let $R$ be a graded commutative Noetherian algebra over a field $k = R_0$.
Suppose that $R$ is a Cohen-Macaulay integral domain with Krull dimension $d$.
Then $R$ is Gorenstein if and only if the Hilbert series $\Hilb_R(t)$ satisfies
\begin{equation}
\label{eq:StanleyGorenHilb}
    \Hilb_R(1/t) = (-1)^d t^{-a} \Hilb_R(t)
\end{equation}
for an integer $a$, which is then equal to $a(R)$.
\end{theorem}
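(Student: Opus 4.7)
The plan is to prove both directions by computing the Hilbert series of the canonical module $\omega_R$ in two different ways and comparing. The technical heart of the argument is the identity
\[
\Hilb_{\omega_R}(t) = (-1)^d \Hilb_R(1/t),
\]
understood as an equality of rational functions in $t$.

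To establish this identity, I would reduce to the Artinian case using a homogeneous system of parameters. Choose $\theta_1,\ldots,\theta_d \in R$ of degrees $e_1,\ldots,e_d$; the Cohen-Macaulay hypothesis means these form a regular sequence both on $R$ and on $\omega_R$, so $\Hilb_R(t)\prod_i (1-t^{e_i}) = \Hilb_{R/(\theta)}(t)$ and $\omega_R/(\theta)\omega_R$ coincides up to a shift of $\sum_i e_i$ with the canonical module of the Artinian quotient $R/(\theta)$. In the Artinian case $\omega_R$ is just the graded $k$-dual of $R$ and the identity $\Hilb_{\omega_R}(t) = \Hilb_R(1/t)$ is immediate from dimension counts; propagating the identities $(1-t^{-e})/(1-t^{e}) = -t^{-e}$ back through the reduction produces the sign $(-1)^d$ together with the cancellation of the shift factors.

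For the forward direction, if $R$ is Gorenstein then $\omega_R \cong R(s)$ as graded $R$-modules for some integer $s$ by the characterization cited before the theorem, so $\Hilb_{\omega_R}(t) = t^{-s}\Hilb_R(t)$. Combined with the duality identity this yields the desired functional equation, and comparing with the definition of the $a$-invariant in terms of the lowest nonvanishing degree of $\omega_R$ identifies the resulting integer with $a(R)$.

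The reverse direction is the main obstacle. Assuming the functional equation with integer $a$, the duality identity gives $\Hilb_{\omega_R}(t) = t^{-a}\Hilb_R(t) = \Hilb_{R(a)}(t)$, and the task is to promote this numerical coincidence to a graded isomorphism $\omega_R \cong R(a)$. Since $R_0 = k$, the lowest nonzero graded piece of $\omega_R$ is then one-dimensional, sitting in degree $-a$; pick a nonzero $\eta \in (\omega_R)_{-a}$. The integral-domain hypothesis enters exactly here: $\omega_R$ is a maximal Cohen-Macaulay module over a Cohen-Macaulay domain and hence torsion-free, so $\operatorname{Ann}_R(\eta) = 0$ and multiplication by $\eta$ defines a graded injection $\varphi \co R(a) \hookrightarrow \omega_R$. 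The cokernel $C$ satisfies $\Hilb_C(t) = \Hilb_{\omega_R}(t) - \Hilb_{R(a)}(t) = 0$, forcing $C = 0$ and $\omega_R \cong R(a)$; by the characterization preceding the theorem, this gives the Gorenstein property. Without integrality the Artinian quotient $R/(\theta)$ could have palindromic Hilbert series yet fail to be Gorenstein, so the domain hypothesis is essential in promoting the Hilbert series coincidence to a module isomorphism.
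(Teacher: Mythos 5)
Your proof is correct. Note that the paper does not prove this statement at all --- it is quoted from Stanley \cite{StanleyHilbFunGradAlg} --- and your argument is essentially the standard one underlying Stanley's Theorem 4.4: the duality identity $\Hilb_{\omega_R}(t)=(-1)^d\Hilb_R(1/t)$ obtained by reduction modulo a homogeneous system of parameters, the shift characterization of Gorenstein for the forward direction, and for the converse the use of the domain hypothesis (torsion-freeness of the maximal Cohen--Macaulay module $\omega_R$) to turn the coincidence of Hilbert series into a graded isomorphism $R(a)\simeq\omega_R$, with the integer $a$ identified as $a(R)$ from the lowest degree of $\omega_R$.
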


If $G$ is finite, then the moment map is trivial so that the complex symplectic quotient is the usual quotient
$(V\oplus V^\ast)/G$. We refer to a quotient of this form as a \emph{complex symplectic orbifold};
note that we consider orbifolds as varieties with quotient singularities rather than
the finer structures of Deligne-Mumford stacks or Lie groupoids.
Because $G$ acts as a subgroup of $\SL(V\oplus V^\ast)$, we have the following consequence of
\cite[Proposition 2.4]{Beauville} and \cite[Theorem 1]{WatanabeGorenstein}.

\begin{theorem}[Beauville \cite{Beauville} and Watanabe \cite{WatanabeGorenstein}]
\label{thrm:Orbifolds}
Suppose that $G$ is a finite group and $V$ is a $G$-module. Then the complex symplectic orbifold
$(V\oplus V^\ast)/G$ has symplectic singularities and is graded Gorenstein.
\end{theorem}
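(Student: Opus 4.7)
The plan is to reduce this to a direct application of the two cited results. First, I would observe that because $G$ is finite its Lie algebra vanishes, so the moment map $\mu\co V\oplus V^\ast\to\mathfrak{g}^\ast$ is identically zero and the complex shell $N=\mu^{-1}(0)$ equals all of $V\oplus V^\ast$. In particular the three candidate definitions of complex symplectic quotient collapse, and $\R[M_0]\otimes_{\R}\C\simeq\C[V\oplus V^\ast]^G$, so the complex symplectic quotient is literally the linear quotient $(V\oplus V^\ast)/G$.

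Next I would verify that $G$ acts by linear symplectomorphisms on $V\oplus V^\ast$. This is immediate: the canonical action of $G$ on $V^\ast$ is the contragredient action, and by construction the pairing $\omega=\sum_j dz_j\wedge dw_j$ on $V\oplus V^\ast$ is nothing but the natural evaluation pairing, which is $G$-invariant. Therefore $G$ embeds into $\SYMP(V\oplus V^\ast)$ as a finite subgroup.

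With this in hand, Beauville's Proposition 2.4 (cited as \cite[Proposition 2.4]{Beauville}) applies directly to the finite linear symplectic quotient $(V\oplus V^\ast)/G$ and gives that it has symplectic singularities. For the graded Gorenstein statement, note that $\SYMP(V\oplus V^\ast)\subset\SL(V\oplus V^\ast)$, since any symplectomorphism preserves the top wedge $\omega^{\dim V}$ up to a nonzero constant and in fact has determinant $1$. Thus $G$ acts on the polynomial ring $\C[V\oplus V^\ast]$ as a finite subgroup of $\SL(V\oplus V^\ast)$, so Watanabe's theorem \cite{WatanabeGorenstein} (in the form recorded in \cite[Theorem 7.1]{StanleyInvarFinGrp} and \cite[Section 6.1]{HerbigHerdenSeaton}) shows that the invariant ring $\C[V\oplus V^\ast]^G$ is Gorenstein with $a$-invariant equal to minus its Krull dimension, i.e., graded Gorenstein.

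No step presents a genuine obstacle here: the only things to check are the triviality of the moment map for finite $G$ and the inclusion $G\subset\SYMP\subset\SL$. The substantive content is entirely absorbed into the two external references, so the role of the proof is really just to assemble the setup and invoke them.
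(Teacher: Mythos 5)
Your proposal is correct and follows exactly the route the paper takes: the paper's entire argument is the remark preceding the statement that the moment map is trivial for finite $G$, so the complex symplectic quotient is the linear quotient $(V\oplus V^\ast)/G$, together with the observation that $G$ acts as a subgroup of $\SL(V\oplus V^\ast)$, after which \cite[Proposition 2.4]{Beauville} and Watanabe's theorem \cite{WatanabeGorenstein} are cited just as you do. Your write-up merely makes explicit the details (collapse of the three definitions, $G\subset\SYMP(V\oplus V^\ast)\subset\SL(V\oplus V^\ast)$) that the paper leaves implicit.
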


The fact that $(V\oplus V^\ast)/G$
has rational singularities follows from a theorem of Boutot \cite{Boutot},
which applies for any reductive $G$.

% xxxxxxxxxxxxxxxxxxxxxxxxxxxxxxxxxxxxxxxxxxxxxxxxxxxxxxxxxxxxxxxxxxxxxxxxx
% xxxxxxxxxxxxxxxxxxxxxxxxxxxxxxxxxxxxxxxxxxxxxxxxxxxxxxxxxxxxxxxxxxxxxxxxx
% xxxxxxxxxxxxxxxxxxxxxxxxxxxxxxxxxxxxxxxxxxxxxxxxxxxxxxxxxxxxxxxxxxxxxxxxx

\section{Auxiliary results}
\label{sec:AuxResults}

% xxxxxxxxxxxxxxxxxxxxxxxxxxxxxxxxxxxxxxxxxxxxxxxxxxxxxxxxxxxxxxxxxxxxxxxxx

\subsection{Normality of the shell}
\label{subsec:NormalityShell}

By \cite[Theorem 2.2(4)]{HerbigSchwarz}, if the $G$-module $V$ is $2$-large, then the shell $N$ is a normal
variety. However, this condition is sufficient but not necessary: there are $G$-modules $V$ that are not $2$-large
such that $N$ is normal.
For example, letting $V \simeq \C^n$ denote the defining representation of
$\SO_n(\C)$, the module
$(V^{\oplus n}, \SO_n(\C))$ is $2$-large while the module $(V^{\oplus n}, \OO_n(\C))$ is not.
Because the shell coincides for both cases, it is normal \cite[page 17]{CapeHerbigSeaton}. See also
Section \ref{sec:Torus} below.
In fact, using the isomorphism of $\C[N]$ with the symmetric algebra of the Jacobian module
described in \cite[Remark 2.4]{HerbigSchwarz},
assuming $V_{(0)}\neq\emptyset$, the condition that $(V,G)$ is
$2$-modular is equivalent to the shell $N$ being factorial by \cite[Proposition 6]{Avramov}.
\begin{remark}
\label{rem:Factorial}
If the abelianization $G/[G,G]$ of $G$ is trivial and $(V,G)$ is $3$-large, then
Bellamy and Schedler have proven that the complex symplectic quotient $N\git G$
is locally factorial \cite[Theorem 1.2(c)]{BellamySchedler}.
This can also be established using the above fact that $N$ is in this case factorial
along with \cite[Theorem 8.4]{Drezet}.
In particular, the principal isotropy group is the kernel of the $G$-action
on $V\oplus V^\ast$ \cite[Corollary 7.7(2)]{GWSlifting} so that by taking the quotient,
we may assume that $(V,G)$ has TPIG. Take the saturated open set in
\cite[Theorem 8.3]{Drezet} to be the union of the principal orbits. As $N$ is factorial,
$\operatorname{Pic}(N)$ is trivial by \cite[Proposition 3.5.1]{BensonBook}.
It remains only to show that the units $\C[N]^\ast$ of $\C[N]$ are constant. However,
$N$ is a cone and hence stable under the scalar action of $\C^\times$. It is
verified in the proof of \cite[Proposition, Section 1.3]{KnopKraftVust} that
$\C^\times$ acts on each element $f \in \C[N]^\ast$ as multiplication by a character $\chi$,
i.e. $f\circ t = \chi(t) f$ for $t \in \C^\times$. Then $f$ must be homogeneous
and cannot vanish at the origin, implying that $f$ is constant.
\end{remark}

In this section, we give the following characterization of $G$-modules $V$ such that $N$ is normal; compare the
similar conditions of \cite[Lemma 3.3]{KaledinLehnSorger}
as well as the related smoothness criterion in
\cite[Lemma 2.1]{DoranHoskins}.

\begin{proposition}
\label{prop:NormalityShell}
Suppose that $(V, G)$ is $0$-modular, and
let $R = \{(x,\xi)\in V\oplus V^\ast \mid G_{(x,\xi)}$ is finite$\}$.
\begin{enumerate}
\item[({\it i})]    The shell $N$ is a complete intersection, hence $\C[V\oplus V^\ast]/(\mu)$
                    is Cohen-Macaulay.
\item[({\it ii})]   The shell $N$ is reduced and irreducible if and only if $V$ is $1$-modular.
\item[({\it iii})]   If $V$ is $2$-modular, then $N$ is normal.
\item[({\it iv})]   The set of smooth points $N_{\sm}$ of $N$ is the set of points
    in $N$ on which $d\mu$ has maximal rank.
\item[({\it v})]   The set $R$ is the set of points in $V\oplus V^\ast$ on which $d\mu$ has maximal rank.
\item[({\it vi})]   The shell $N$ is normal if and only if $N\smallsetminus (N\cap R)$ has complex codimension at least two in $N$.
\end{enumerate}
\end{proposition}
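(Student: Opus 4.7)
The plan is to prove the six items in the order (v), (i), (iv), (ii), (iii), (vi). Part (v) is a direct computation: identifying the transpose $(d\mu_{(v,\xi)})^\ast\colon \mathfrak g \to (V\oplus V^\ast)^\ast$ with the map $A \mapsto \bigl((\delta v, \delta \xi) \mapsto \delta\xi(Av) + \xi(A\delta v)\bigr)$, testing on pure $v$- and $\xi$-directions shows its kernel equals $\{A\in\mathfrak g : Av = 0,\ A\xi = 0\} = \lie(G_{(v,\xi)})$. Hence $\Rank d\mu_{(v,\xi)} = \dim\mathfrak g - \dim G_{(v,\xi)}$, which is maximal if and only if $(v,\xi)\in R$.

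For (i), the $0$-modular hypothesis gives $V_{(0)}\neq\emptyset$; for any $v\in V_{(0)}$, $G_{(v,0)} = G_v$ is finite, so by (v) the moment map $\mu$ is submersive at $(v,0) \in N$ and hence dominant. The key step is to show that the quadratic components $\mu_1,\ldots,\mu_{\dim\mathfrak g}$ form a regular sequence in $\C[V\oplus V^\ast]$, equivalently $\dim N = 2\dim V - \dim\mathfrak g$. I would combine the $\C^\times$-equivariance of $\mu$ (weight $2$ on the target) and semicontinuity of fiber dimension with the description of $\C[N]$ via the symmetric algebra of the Jacobian module from \cite{HerbigSchwarz} to rule out spurious components of excessive dimension; Cohen-Macaulayness of $\C[V\oplus V^\ast]/(\mu)$ then follows from the complete intersection property. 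Part (iv) is immediate from (i): at a point of $N$ where $d\mu$ has maximal rank, the implicit function theorem gives a smooth local chart on $N$, and conversely at a smooth point of $N$ the Zariski tangent space $\ker d\mu_{(v,\xi)}$ has dimension $\dim N = 2\dim V - \dim\mathfrak g$, forcing $d\mu$ to have maximal rank.

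Parts (ii), (iii), and (vi) all rest on a single uniform dimension estimate: for $(v,\xi)\in N\setminus R$, one has $\lie(G_{(v,\xi)})\subset \lie(G_v)$ of positive dimension, so $v\in V_{(r)}$ for some $r\geq 1$; since $\xi$ is then constrained to lie in $(\mathfrak g v)^\perp\subset V^\ast$ of dimension $\dim V - \dim G + r$, a direct count gives $\dim\bigl(N\cap (V_{(r)}\times V^\ast)\bigr) \leq \dim V_{(r)} + \dim V - \dim G + r$, and the $k$-modular bound $\codim V_{(r)}\geq r+k$ converts this into $\codim_N(N\setminus R)\geq k$. For (ii), $1$-modularity yields codimension $\geq 1$, so by (iv) the smooth locus $N\cap R$ is dense in the Cohen-Macaulay scheme $N$, giving reducedness; irreducibility follows because $N\cap(V_{(0)}\times V^\ast)$ is a vector bundle over the connected $V_{(0)}$ and is open dense in $N$, so all of $N$ lies in one irreducible component. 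For (iii), $2$-modularity gives codimension $\geq 2$, so Serre's criterion ($R_1+S_2\Rightarrow$ normal) applies using $S_2$ from (i) and $R_1$ from (iv). For (vi), $N$ is Cohen-Macaulay by (i), and its smooth locus is $N\cap R$ by (iv), so Serre's criterion gives that $N$ is normal if and only if $R_1$ holds, if and only if $\codim_N\bigl(N\setminus(N\cap R)\bigr)\geq 2$.

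The main obstacle is completing (i): ruling out irreducible components of $N$ contained entirely in $V\oplus V^\ast\setminus R$ under only the $0$-modular hypothesis. Once this is accomplished, the remaining parts reduce to standard commutative algebra (Serre's criterion, Cohen-Macaulay plus generically reduced implies reduced) together with the uniform dimension count via the stratification by stabilizer dimension.
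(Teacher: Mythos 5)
Your parts (v), (iv), (iii) and (vi) are correct and follow essentially the same route as the paper (kernel of the transposed Jacobian equals $\lie(G_{(v,\xi)})$, the Jacobian criterion for the complete intersection, the stratified dimension count, and Serre's criterion). But there are two genuine gaps. First, part (i) is left unproved: your sketch invoking $\C^\times$-equivariance, semicontinuity and the Jacobian-module description of $\C[N]$ is not an argument as written, and the ``main obstacle'' you name is not the right one. For the complete intersection property you do not need to rule out components of $N$ lying in the non-free locus (such components genuinely occur whenever $V$ fails to be $1$-modular, and they are harmless for (i)); you only need $\dim_\C N \leq 2\dim_\C V - \dim_\C G$, since the reverse inequality is Krull's principal ideal theorem and, in the Cohen--Macaulay ring $\C[V\oplus V^\ast]$, height equal to the number of generators forces $\mu_1,\ldots,\mu_{\dim_\C\mathfrak g}$ to be a regular sequence. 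The bound follows from the very count you use later: for each $r\geq 0$ the fiber of $N\cap(V_{(r)}\times V^\ast)$ over $v$ is $(\mathfrak g v)^\perp$ of dimension $\dim_\C V-\dim_\C G+r$, so $0$-modularity ($\codim_\C V_{(r)}\geq r$) gives $\dim_\C\big(N\cap(V_{(r)}\times V^\ast)\big)\leq 2\dim_\C V-\dim_\C G$ on every stratum. The paper simply quotes \cite[Proposition 9.4]{GWSlifting} here; your proposal neither quotes a reference nor closes the step, even though your own estimate would do it.

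Second, part (ii) is an equivalence and you prove only the implication ``$1$-modular $\Rightarrow$ reduced and irreducible.'' The converse is missing. The paper's argument: if $V$ is $0$-modular but not $1$-modular, there is $r\geq 1$ with $\codim_\C V_{(r)}=r$, so the closure $N_r$ of $N\cap(V_{(r)}\times V^\ast)$ has dimension $\dim_\C V_{(r)}+\dim_\C V-\dim_\C G+r=2\dim_\C V-\dim_\C G=\dim_\C N_0$; since $N_r$ lies over $\overline{V_{(r)}}$, which is disjoint from $V_{(0)}$ by semicontinuity of stabilizer dimension, any top-dimensional component of $N_r$ is an irreducible component of $N$ distinct from $N_0$, so $N$ is reducible. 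Without this (or some substitute) your (ii) is only half proved. With these two repairs the proposal matches the paper's proof in substance.
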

\begin{proof}
As $(V, G)$ is $0$-modular,
({\it i}) follows from \cite[Proposition 9.4]{GWSlifting}. In particular,
$N$ is equidimensional.

For each $r\geq 0$, let $N_r$ denote the closure of $(V_{(r)}\times V^\ast)\cap N$,
and note that each $(V_{(r)}\times V^\ast)\cap N$ is a vector bundle over $V_{(r)}$ with
fiber dimension $\dim_\C V - \dim_\C G + r$.
Because $(V,G)$ is $0$-modular, $\dim_\C V_{(r)} \leq \dim_\C V - r$ for each $r$.
Assume $(V, G)$ is not $1$-modular, and then there is an $r > 0$ such that $\dim_\C V_{(r)} = \dim_\C V - r$,
implying $\dim_\C N_r = \dim_\C N_0$. But then $N_r$ is not a subset of $N_0$, implying that $N$ is
not irreducible.
Now assume $(V, G)$ is $1$-modular, and then for each $r \geq 1$,
$\dim_\C V_{(r)} \leq \dim_\C V - r - 1$ so that $\dim_\C N_r < \dim_\C N_0$. As $N$
is equidimensional, it must then be that $N_r\subset N_0$ for each $r\geq 1$. Hence $N$ is equal
to the closure of $N_0 \smallsetminus\bigcup_{r\geq 1} N_r$. As $V_{(0)}$ is irreducible,
and as $(V_{(0)}\times V^\ast)\cap N$ is a complex vector bundle over $V_{(0)}$, its closure
$N_0$ is irreducible. Then
$N_0 \smallsetminus\bigcup_{r\geq 1} N_r$ is as well irreducible and smooth, hence reduced,
proving ({\it ii}). To prove ({\it iii}), note that if $(V, G)$ is $2$-modular,
then the $N_r$ have
complex codimension at least $2$ in $N$. Then $N$ is smooth in codimension $1$
and Cohen-Macaulay by ({\it i}), hence normal by Serre's criterion \cite[Theorem 23.8]{MatsumuraBook}.

As $N$ is a complete intersection, ({\it iv}) is simply the Jacobian criterion.
Now, for each $A \in \mathfrak{g}$, let $\mu^A\co V\oplus V^\ast \to \C$ denote the
evaluation of $\mu$ at $A$. As $\mu$ is bilinear, $d\mu^A(x,\xi) = 0$ if and only if
$A(x) = \xi\circ A = 0$.    Then ({\it v})
follows from the fact that $R$ is the set of points where  no $d\mu^A$ vanishes for   $A \in\mathfrak{g}\smallsetminus\{0\}$,
and ({\it vi}) follows from the fact that $N$ is then smooth in codimension
$1$ if and only if
$N\smallsetminus (N\cap R)$ has codimension at least $2$ in $N$.
\end{proof}
See \cite[Remark 2.4]{HerbigSchwarz} for an alternate proof of ({\it ii}) and ({\it iii}).

% xxxxxxxxxxxxxxxxxxxxxxxxxxxxxxxxxxxxxxxxxxxxxxxxxxxxxxxxxxxxxxxxxxxxxxxxx

\subsection{The $k$-large property for $V^\ast$}
\label{subsec:KLargeDual}

Let $G$ be a reductive group and $V$ a $G$-module. The goal of this section is to demonstrate that
several properties of $V$ imply the same for the dual module $V^\ast$. We collect these in the following.

\begin{theorem}
\label{thrm:KLargeDual}
Let $G$ be a reductive group and $V$ a $G$-module.
\begin{enumerate}
\item[({\it i})]   The $G$-module $V$ is stable if and only if $V^\ast$ is stable.
\item[({\it ii})]   The $G$-module $V$ has TPIG (respectively FPIG) if and only if $V^\ast$ has TPIG (respectively FPIG).
\item[({\it iii})]   The $G$-module $V$ is $k$-modular if and only if $V^\ast$ is $k$-modular.
\item[({\it iv})]    The $G$-module $V$ is $k$-principal if and only if $V^\ast$ is $k$-principal.
\item[({\it v})]   The $G$-module $V$ is $k$-large if and only if $V^\ast$ is $k$-large.
\end{enumerate}
\end{theorem}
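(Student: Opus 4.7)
The plan is to exploit an involution of $G$ that interchanges $V$ and $V^*$ up to isomorphism of $G$-modules. For connected complex reductive $G$ there exists the Chevalley involution $\theta\colon G\to G$, an algebraic automorphism of order two with $\theta(g)$ conjugate to $g^{-1}$ for every $g\in G$. Comparing characters gives
\[
\chi_V(\theta(g))=\chi_V(g^{-1})=\chi_{V^*}(g),
\]
so $V^*$ is isomorphic as a $G$-module to the twist $V^\theta$, the vector space $V$ equipped with the action $g\cdot_\theta v:=\theta(g)v$. For disconnected reductive $G=K_\C$ one extends $\theta$ to the components of $G$; equivalently, one may use the $K$-equivariant antilinear Hermitian isomorphism $\Phi\colon V\to V^*$ coming from a $K$-invariant Hermitian inner product on $V$. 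A direct computation using $\mathfrak{g}=\mathfrak{k}\oplus i\mathfrak{k}$ and the antilinearity of $\Phi$ shows that $A\cdot\Phi(v)=\Phi(\bar A v)$ for all $A\in\mathfrak{g}$ (where the overline is complex conjugation with respect to $\mathfrak{k}$), and hence $\mathfrak{g}_{\Phi(v)}=\overline{\mathfrak{g}_v}$.

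Under the identification $V^\theta\cong V^*$, the isotropy of $v\in V$ viewed in $V^*$ is $\theta(G_v)$, so one obtains a bijection $V\to V^*$ carrying the stratum $V_{(r)}$ onto $(V^*)_{(r)}$ for every $r\geq 0$. Since $\theta$ is a group automorphism of $G$, it preserves the dimension of subgroups as well as reductivity, finiteness, and triviality. Consequently $\dim V_{(r)}=\dim (V^*)_{(r)}$ for every $r$, and the principal isotropy of $V^*$ is the $\theta$-image of the principal isotropy of $V$, in particular isomorphic to it as an abstract group. In the alternative Hermitian picture, $\overline{\mathfrak{g}_v}$ has the same complex dimension as $\mathfrak{g}_v$ (complex conjugation on $\mathfrak{g}$ is a real involution preserving complex Lie subalgebras dimension-wise), so the same dimension matching holds.

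Parts ({\it iii}) and ({\it iv}) then follow immediately from $\dim V_{(r)}=\dim (V^*)_{(r)}$. Part ({\it ii}) follows because triviality and finiteness of subgroups transfer under $\theta$. Part ({\it i}) follows since $V$ is stable iff its principal isotropy is reductive (the isotropy of a closed orbit, by Matsushima) and $\theta$ preserves reductivity. Part ({\it v}) is the conjunction of ({\it ii}), ({\it iii}), and ({\it iv}). The main technical obstacle I anticipate is guaranteeing the existence of a suitable $\theta$ for disconnected reductive $G=K_\C$ rather than only for the identity component; in the Hermitian-isomorphism route this is replaced by promoting the Lie-algebra identity $\mathfrak{g}_{\Phi(v)}=\overline{\mathfrak{g}_v}$ to full-group statements, particularly for TPIG, where one invokes Mostow's theorem that every finite subgroup of $G$ is conjugate to a subgroup of $K$ in order to conclude that trivial $K$-isotropy of $v$ forces trivial $G$-isotropy of $\Phi(v)$.
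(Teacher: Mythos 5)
Your argument for connected $G$ is essentially the paper's: the automorphism $\tau$ used there is exactly a Chevalley-type involution interchanging each module with its dual, and that part is fine. The gap is in the disconnected case, which is precisely where the paper's Lemma on $\Cl(V)$ does its real work. First, "one extends $\theta$ to the components of $G$" is asserted, not proved, and it is false in general: a disconnected reductive group (already a finite group) need not admit any automorphism carrying every conjugacy class to its inverse, i.e.\ no automorphism dualizing all modules need exist. This is why the paper abandons $\tau$ beyond $G^0$ and instead argues through the fixed spaces $V^H$, the TPIG action of $N_G(H)/H$ on $V^H$ and $(V^\ast)^H$ (Kraft--Schwarz), Luna's closed-orbit criterion, and a comparison of null cones of slice representations to match the strata and their dimensions.

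Second, your Hermitian fallback as stated only yields the infinitesimal identity $\mathfrak{g}_{\Phi(v)}=\overline{\mathfrak{g}_v}$, hence only that $\Phi$ matches the isotropy-dimension strata $V_{(r)}$ with $(V^\ast)_{(r)}$. That suffices for ({\it iii}), but not for ({\it i}), ({\it ii}), ({\it iv}): $k$-principality, FPIG/TPIG and stability are statements about the Luna stratification by isotropy type of \emph{closed} orbits, and $V\smallsetminus V_{\pr}$ can contain large sets of points with finite (even trivial-dimensional) isotropy, so "({\it iv}) follows immediately from $\dim V_{(r)}=\dim (V^\ast)_{(r)}$" does not hold, and the Mostow patch addresses at best TPIG, not FPIG, stability, or principality. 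The repair is to upgrade to the group level: $\Phi(gv)=\sigma(g)\Phi(v)$ where $\sigma$ is the antiholomorphic involution of $G$ with fixed subgroup $K$, so $G_{\Phi(v)}=\sigma(G_v)$ and $\Phi$ carries $G$-orbits homeomorphically to $G$-orbits, in particular closed orbits to closed orbits; with that, all five parts follow and Mostow is unnecessary. (Also, your stability criterion is misstated: isotropy groups of closed orbits are always reductive by Matsushima, so "principal isotropy reductive" is vacuous; the relevant criterion is reductivity of \emph{generic} isotropy groups, which again requires the group-level transfer. The paper sidesteps ({\it i}) and ({\it ii}) entirely by citing Kraft--Schwarz, which also gives equality of the principal isotropy groups of $V$ and $V^\ast$.)
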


Let $\Cl(V)$ denote the set of conjugacy classes $(G_v)$ of isotropy groups of closed orbits $Gv$ in $V$. For $(H)\in \Cl(V)$ let $(V\git G)_{(H)}$ denote the corresponding stratum of $V\git G$ and let $V^{(H)}$ denote the inverse image of $(V\git G)_{(H)}$ in $V$. We need the following lemma.

\begin{lemma}
\label{lem:Cl(V)}
We have $\Cl(V)=\Cl(V^\ast)$ and for each $(H)\in\Cl(V)$  we have
$\dim_\C (V\git G)_{(H)}=\dim_\C (V^\ast\git G)_{(H)}$
and $\dim_\C V^{(H)}=\dim_\C (V^\ast)^{(H)}$.
\end{lemma}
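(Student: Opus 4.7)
The plan is to use the antilinear $K$-equivariant Hermitian duality $\phi\co V\to V^\ast$, $\phi(v)(u):=\langle u,v\rangle_V$, together with the Kempf--Ness theorem. Using the explicit formula for $J$ and the contragradient $\mathfrak{k}$-action on $V^\ast$, one computes $J_{V^\ast}\circ\phi=-J_V$, so $\phi$ restricts to a $K$-equivariant real-analytic isomorphism $J_V^{-1}(0)\to J_{V^\ast}^{-1}(0)$ with $K_v=K_{\phi(v)}$ for every $v\in V$. By Kempf--Ness, each $K$-orbit in $J_V^{-1}(0)$ sits on a closed $G$-orbit, the projections $J_V^{-1}(0)/K\to V\git G$ and $J_{V^\ast}^{-1}(0)/K\to V^\ast\git G$ are homeomorphisms, and for such $v_0$ the reductive stabilizer satisfies $G_{v_0}=(K_{v_0})_\C$. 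Consequently $Gv_0\mapsto G\phi(v_0)$ is a bijection between closed $G$-orbits in $V$ and in $V^\ast$ with stabilizers literally equal as subgroups of $G$, yielding both $\Cl(V)=\Cl(V^\ast)$ and a stratum-preserving homeomorphism $V\git G\to V^\ast\git G$.

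Since each Luna stratum is a smooth complex subvariety, its complex dimension equals half its real dimension; the homeomorphism above therefore gives $\dim_\C(V\git G)_{(H)}=\dim_\C(V^\ast\git G)_{(H)}$. For the second equality I would upgrade this using the \emph{twisted} $G$-equivariance of $\phi$: writing $\bar g$ for complex conjugation of $g\in G$ relative to $K$, the antilinearity of $\phi$ combined with its $K$-equivariance yields $\phi(gv)=\bar g\phi(v)$. Since $\bar G=G$ as a set, $\phi(\overline{Gv})=\overline{G\phi(v)}$, so $\phi$ carries each fiber $\pi^{-1}([x])$ of the orbit map $\pi\co V\to V\git G$ onto the fiber of $V^\ast\to V^\ast\git G$ over $[\phi(x)]$. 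Each such fiber is a complex algebraic subvariety, so the real-analytic bijection $\phi$ forces their complex dimensions to agree. By Luna, fibers of $\pi$ over a single stratum are isomorphic and hence equidimensional, so
\[
    \dim_\C V^{(H)}=\dim_\C(V\git G)_{(H)}+\dim_\C\pi^{-1}([x])
\]
for any $[x]\in(V\git G)_{(H)}$, and applying the analogous formula to $V^\ast$ finishes the argument.

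The only genuine computations are the moment-map identity $J_{V^\ast}\circ\phi=-J_V$ and the twisted equivariance $\phi(gv)=\bar g\phi(v)$, both short consequences of the antilinearity of $\phi$ and the contragradient $K$-action on $V^\ast$. The main conceptual point is that, although $\phi$ is not algebraic, it is enough to transport the \emph{real} topology and the orbit closure structure between $V$ and $V^\ast$; from this the algebraic dimension statements follow because every object being measured is a complex subvariety, and the remainder is a direct assembly of Kempf--Ness and Luna applied symmetrically to $V$ and $V^\ast$.
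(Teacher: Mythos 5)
Your argument is correct, and it takes a genuinely different route from the paper's. You transport everything through the antiholomorphic duality $\phi\co v\mapsto\langle\cdot,v\rangle$ together with Kempf--Ness theory: the identity $J_{V^\ast}\circ\phi=-J_V$, the twisted equivariance $\phi(gv)=\theta(g)\phi(v)$ with $\theta$ the conjugation of $G$ relative to $K$ (so $\phi$ carries $G$-orbits, orbit closures, and hence fibers of the quotient maps onto one another), and the facts that the zero fiber of the moment map meets every closed orbit, that $J^{-1}(0)/K\to V\git G$ is a homeomorphism, and that $G_v=(K_v)_\C$ for $v\in J^{-1}(0)$ --- all of which are in \cite{GWSkempfNess} and are already invoked elsewhere in this paper (Remark \ref{rem:IsotropyRvsV}, proof of Lemma \ref{lem:TorusStableRed}); note that since the lemma is stated for an arbitrary $G$-module, you should begin by fixing a maximal compact $K\subset G$ and a $K$-invariant Hermitian form, which is always possible. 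Because homeomorphisms preserve topological dimension and every set being measured is a complex variety, the dimension equalities follow; your appeal to Luna for constancy of fiber dimension over a stratum is also fine, and in fact you could skip the base-plus-fiber bookkeeping entirely, since $\phi(V^{(H)})=(V^\ast)^{(H)}$ directly and $\phi$ is an $\R$-linear isomorphism. The paper instead argues algebraically: for connected $G$ it uses the automorphism $\tau$ of $G$ interchanging each irreducible module with its dual, so that $V^\ast$ is the same space with a twisted action and the identifications of strata are literal; for disconnected $G$ it runs a more delicate argument through $V^H$, TPIG of $N_G(H)/H$, and Luna's adherence criterion \cite{LunaAdherences}, and computes $\dim_\C V^{(H)}$ from the slice decomposition $V\simeq\mathfrak{g}/\mathfrak{h}\oplus W\oplus\C^d$ together with $\dim_\C\NN(W)=\dim_\C\NN(W^\ast)$. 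Your route buys uniformity (no case split on connectedness, literal equality of stabilizers, and a stratum-preserving homeomorphism $V\git G\to V^\ast\git G$ as a by-product) at the price of being transcendental; the paper's route stays within algebraic representation theory, in the spirit of its source \cite{Homaloidal}.
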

\begin{proof}
We follow ideas of the proof of \cite[Proposition E]{Homaloidal}.
First assume that $G$ is connected. Then there is an automorphism $\tau$ of $G$ which interchanges $W$ and $W^\ast$ for every
irreducible $G$-module $W$. Specifically, $\tau$ induces an automorphism
of the weights of $G$-modules sending the highest weight of $W$ to that of $W^\ast$. Thus we may consider $V^\ast$ as a representation of $G$ on the same space $V$ but with $g\in G$ acting via $\tau(g)$. Then
$\C[V]^G=\C[V^\ast]^G$ and for each isotropy group $H$, the identification sends
$V^{(H)}$ to $(V^\ast)^{(\tau(H))}$ and sends $(V\git G)_{(H)}$ to $(V^\ast\git G)_{(\tau(H))}$.

Returning to the general case so that $G$ need not be connected, let $(H)\in\Cl(V)$. Then by \cite[Lemma 5.5]{GWSliftingHomotopies}, $V^H$ has a Zariski open subset  consisting of the intersection of $V^H$ with  closed orbits with isotropy group $H$. Let   $V^{\langle H\rangle}$ denote this intersection. Then  the image of $V^{\langle H\rangle}$ in $V\git G$  is $(V\git G)_{(H)}$. Because $H$ is an isotropy group of $V$, the action of $N_G(H)/H$ on $V^H$ has TPIG,
and $\dim_{\C} V^H\git N_G(H) = \dim_{\C} (V\git H)_{(H)}$. By \cite[Proposition E(2)]{Homaloidal}, the action of
$N_G(H)/H$ on $(V^H)^\ast\simeq (V^\ast)^H$ has TPIG as well. By \cite[Corollaire 1]{LunaAdherences},
a $G$-orbit through $x$ in $(V^\ast)^H$ is closed if and only if $(N_G(H)/H)x$ is closed. Hence there is a
Zariski open subset $U^\ast_H$ of $(V^\ast)^H$ consisting of points with closed $G$-orbit.
Then there is an $H^\prime$ with $(H^\prime)\in\Cl(V^\ast)$ such that
$U^\ast_H$ has an open intersection with $(V^\ast)^{\langle H^\prime\rangle}$ where $H\subset H^\prime$, $\dim_\C H^\prime=\dim_\C H$, and $(V^\ast)^{H^\prime}\subset (V^\ast)^H$. Then as $(V^\ast)^H\simeq (V^H)^\ast$ and similarly for $H^\prime$, $(V^\ast)^{H^\prime}=(V^\ast)^H$ and $V^H=V^{H^\prime}$. Applying the same argument to $(H^\prime)\in\Cl(V^\ast)$, we see that $H\subset H^\prime\subset H^{\prime\prime}$ where $H^{\prime\prime}$ is the isotropy group of closed $G$-orbits intersecting an open subset of $V^{H^{\prime\prime}}=V^{H^\prime}=V^H$. Hence $H=H^{\prime\prime}=H^\prime$ and $\dim_\C (V\git G)_{(H)}=\dim_\C (V^\ast\git G)_{(H)}$.

Let $(W\oplus \C^d)$ be the slice representation of $H$ where $W^H=0$ and $H$ acts trivially on $\C^d$. Then  $V\simeq \mathfrak g/\mathfrak h\oplus W\oplus \C^d$ as $H$-module. Letting $e = \dim_\C (V\git G)_{(H)}$, the dimension of $V^{(H)}$ is $\dim_\C \mathfrak g/\mathfrak h+e+\dim_\C\NN(W)$ where $\NN(W)$ is the null cone of $W$. For the action of $H$ on $V^\ast$ we get the decomposition $V^\ast\simeq\mathfrak g/\mathfrak h\oplus W^\ast\oplus \C^e$, so it remains only to show that $\dim_\C\NN(W)=\dim_\C\NN(W^\ast)$. But as a set, the null cone of $W$ is determined by the action of $H^0$, and the identification
of the $H^0$-modules $W$ with $W^\ast$ such that $\C[W]^{H^0}=\C[W^\ast]^{H^0}$ identifies the null cones. Thus $\dim_\C V^{(H)}=\dim_\C (V^\ast)^{(H)}$.
\end{proof}

\begin{remark}
\label{rem:RegSeqMoment}
Let $V$ be a $G$-module with FPIG.
Choose a basis for $\mathfrak{g}$ and let $\mu_1,\ldots,\mu_m$ denote the component functions
of $\mu$ with respect to this basis.
It can be shown that in this context, the $k$-large property for $(V,G)$ is
equivalent to the existence of homogeneous $f_1,\dots,f_k\in\C[V]^G$, vanishing on the inverse images in $V$ of
the non-principal strata of $V\git G$, such that  $f_1,\ldots,f_k,\mu_1,\ldots,\mu_m$ is a regular sequence in
$\C[V\oplus V^\ast]$. See \cite[8.2 and Lemma 9.7]{GWSlifting}.
We will need this fact in Proposition \ref{prop:3LargeCodim} below.
\end{remark}

\begin{proof}[Proof of Theorem \ref{thrm:KLargeDual}]
We first note that ({\it i}) is proven in \cite[Proposition E]{Homaloidal}, where it is also demonstrated
that the principal isotropy groups of $V$ and $V^\ast$ are the same. Then ({\it ii}) is an immediate
consequence. Since $(V,G)$ is $k$-modular if and only if $(V,G^0)$ is $k$-modular, ({\it iii}) is immediate using the automorphism $\tau$ of Lemma \ref{lem:Cl(V)}.
Statement ({\it iv}) follows directly from Lemma \ref{lem:Cl(V)} and, of course,  ({\it v}) follows from  ({\it ii}),  ({\it iii}) and  ({\it iv}).
\end{proof}

% xxxxxxxxxxxxxxxxxxxxxxxxxxxxxxxxxxxxxxxxxxxxxxxxxxxxxxxxxxxxxxxxxxxxxxxxx

\subsection{The $k$-large property is generic}
\label{subsec:MostVLarge}

In this section, we support the claim that ``most" $G$-modules are $3$-large by demonstrating the following.

\begin{theorem}
\label{thrm:MostVLarge}
Let $G$ be connected and semisimple and let $k\in\N$. Among $G$-modules $V$ such that $V^G = \{0\}$
and every irreducible component of $V$ is a faithful $\mathfrak{g}$-module, there are up to isomorphism
only finitely many $V$ that are not $k$-large.
\end{theorem}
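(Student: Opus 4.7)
My plan is to show there exists a dimension bound $D = D(G,k)$ such that every $G$-module $V$ satisfying the stated hypotheses with $\dim_\C V \geq D$ is automatically $k$-large. Since a fixed reductive $G$ admits only finitely many inequivalent irreducible modules of each dimension, there are only finitely many $G$-modules of dimension at most $D$ up to isomorphism, and the theorem follows.

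Writing $V = \bigoplus_{i=1}^s m_i W_i$ as an isotypic decomposition with the $W_i$ pairwise distinct nontrivial irreducibles, the key elementary observation is that since each $W_i$ is faithful as a $\mathfrak{g}$-module, any reductive $H \subset G$ with $\dim_\C H \geq 1$ acts nontrivially on each $W_i$, so $\dim_\C W_i - \dim_\C W_i^H \geq 1$. By additivity of $H$-fixed subspaces under direct sum,
\[
    \dim_\C V - \dim_\C V^H = \sum_{i=1}^s m_i(\dim_\C W_i - \dim_\C W_i^H) \geq \sum_{i=1}^s m_i.
\]
The $k$-modular property then reduces to verifying $\dim_\C V - \dim_\C V^H \geq \dim_\C G + k$ for every reductive $H \subset G$ of positive dimension arising as the stabilizer of a closed orbit, via the standard bound $\dim_\C V_{(r)} \leq \dim_\C G - r + \dim_\C V^H$ for any such $H$ of dimension $r$. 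The $k$-principal property is handled analogously, since the non-principal locus $V \setminus V_{\pr}$ is contained in a finite union of sets of the form $G \cdot V^H$ for appropriate reductive $H$.

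For the FPIG condition, I would invoke Elashvili's classical theorem classifying irreducible faithful representations of connected semisimple groups with nontrivial generic stabilizer: for fixed $G$, only finitely many such irreducibles have nontrivial principal isotropy. Combined with a Richardson-style genericity argument, whereby the principal isotropy of $V_1 \oplus V_2$ is an intersection $H_1 \cap gH_2g\inv$ for generic $g \in G$ with $H_i$ the principal isotropy of $V_i$, it follows that any $V$ with sufficiently many summands, or containing a summand outside this finite Elashvili list, has finite principal isotropy.

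The main obstacle is uniformity of the codimension estimates over the (continuous) family of reductive subgroups $H \subset G$ of positive dimension. My approach to uniformity is to reduce to the maximal proper reductive subgroups of $G$, of which there are only finitely many conjugacy classes for connected semisimple $G$ by Dynkin's classification of maximal subalgebras; for such $H$ a case analysis combined with standard weight-multiplicity bounds yields the required uniform lower bound on $\dim_\C W_i - \dim_\C W_i^H$ as a function of $\dim_\C W_i$. Putting these pieces together, for $\dim_\C V$ sufficiently large either $\sum_i m_i$ is so large that the lower bound $\sum_i m_i$ already exceeds $\dim_\C G + k$ uniformly in $H$, or some single $\dim_\C W_i$ is so large that the per-summand bound $\dim_\C W_i - \dim_\C W_i^H$ suffices, in either case establishing $k$-largeness.
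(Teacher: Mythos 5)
Your overall strategy (prove that every $V$ satisfying the hypotheses of sufficiently large dimension is $k$-large, then note there are only finitely many modules below the dimension bound) is the same as the paper's, but the estimates you supply do not establish $k$-principality or $k$-modularity, and this is where the real work lies. The gap is that you control only the loci of points \emph{fixed} by positive-dimensional reductive subgroups, whereas $V\smallsetminus V_{\pr}$ and the sets $V_{(r)}$ are dominated by points on \emph{non-closed} orbits, whose isotropy groups are typically finite-by-unipotent and fix no positive-dimensional reductive subgroup's worth of structure. Concretely, for $G=\SL_2(\C)$ and $V=R_2$ (the adjoint module), $V\smallsetminus V_{\pr}$ is the set of nonzero nilpotent elements; such a point has isotropy $\{\pm 1\}\cdot U$ with $U$ unipotent, it lies in no $G\cdot V^H$ with $H$ a positive-dimensional reductive subgroup, and the fixed space of the reductive part $\{\pm 1\}$ of its isotropy is all of $V$. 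So neither your claimed containment $V\smallsetminus V_{\pr}\subset\bigcup_H G\cdot V^H$ nor the bound $\dim_\C V_{(r)}\leq \dim_\C G-r+\dim_\C V^H$ (for $H$ reductive of dimension $r$ coming from closed orbits) has a valid derivation: isotropy groups of dimension $r$ need not contain a reductive subgroup of dimension $r$, or indeed any nontrivial reductive subgroup. What is actually needed, and what the paper uses, is Luna's slice theorem to identify $\codim_\C V^{(H)}$ with the codimension of the null cone $\NN(\widehat W)$ of the slice representation, together with a Hilbert--Mumford estimate bounding that null cone uniformly via one-parameter subgroups (Gordeev's Lemma 6, giving the constant $d$), combined with the uniform fixed-point bound $\codim_\C(V^L,V)\geq\sqrt{\dim_\C V}/17$ of \cite{Go}; the $k$-modularity statement is then obtained by running the proof of \cite[Corollary 11.6]{GWSlifting} with $\kappa_m(V)\geq k$. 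None of this is replaced by the inequality $\dim_\C V-\dim_\C V^H\geq\sum_i m_i$, which is in any case vacuous in the typical situation of a single irreducible summand of large dimension.

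Two further points. Your proposed uniformity reduction goes in the wrong direction: if $H\subset M$ with $M$ maximal reductive, then $V^M\subset V^H$, so a lower bound on $\codim_\C V^M$ says nothing about $\codim_\C V^H$; the correct reduction is to \emph{smaller} subgroups, namely a one-parameter subgroup $\lambda$ inside $H$, giving $V^H\subset V_\lambda^0$, and the uniform estimate over all such $\lambda$ (and over all nontrivially acting subgroups) is precisely Gordeev's theorem, which the paper quotes rather than reproves. Finally, your FPIG argument concerns stabilizers of generic points, whereas FPIG here is about isotropy groups of generic \emph{closed} orbits; this can be patched (a finite generic stabilizer forces stability, after which the two notions agree), but as written it is glossed over, and even granting FPIG, the fixed-point estimates alone still do not yield $k$-principality for the reasons above.
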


This theorem was established for the case $k = 2$ in \cite[Corollary 11.6(2)]{GWSlifting}.
Note that for $G$ simple, Theorem \ref{thrm:MostVLarge} implies that all but finitely
$G$-modules $V$ such that $V^G = \{0\}$ are $k$-large.

To begin, let $Y(G)$ denote the set of $1$-parameter subgroups of $G$. Let $\lambda\in Y(G)$ and let $V_\lambda^+$
(respectively $V_\lambda^-$) denote the subspace of $V$ on which $\lambda$ has strictly positive
(respectively strictly negative) weights. Similarly let $V_\lambda^0$ be the subspace of $V$
fixed by $\lambda$. Then by \cite[Lemma 6]{Go}, there is a constant $d\geq 2$,
depending only on $G$, such that
\begin{equation}
\label{eq:MostGoEst1}
    \dim_{\C} V_\lambda^- \geq \frac{\dim_{\C} (V_\lambda^+ \oplus V_\lambda^-)}{d}.
\end{equation}
Then we have the following.

\begin{corollary}
\label{cor:MostSliceBound}
Let $v \in V$ have closed $G$-orbit, let $H = G_v$, and assume $\dim_{\C} H > 0$.
Let $W$ be the slice representation at $v$
and let $\lambda\in Y(H)$. Then
\[
    \dim_{\C} \big(W_\lambda^- \oplus (\mathfrak{g}/\mathfrak{h})_\lambda^-\big)
        \geq    \frac {\dim_{\C} (W_\lambda^+ \oplus W_\lambda^-)}{d}.
\]
\end{corollary}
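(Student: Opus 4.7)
The plan is to deduce the inequality directly from the Goodman-type bound \eqref{eq:MostGoEst1} applied to the ambient module $V$ as a $G$-module, using Luna's slice theorem to relate the $\lambda$-weight subspaces of $V$ to those of $W$ and $\mathfrak{g}/\mathfrak{h}$. The key observation is that any $\lambda \in Y(H)$ may be viewed as an element of $Y(G)$ via the inclusion $H \hookrightarrow G$, so the constant $d$ supplied by \eqref{eq:MostGoEst1} (depending only on $G$) is available.

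First, since $v$ has closed $G$-orbit and $H = G_v$, Luna's slice theorem furnishes an $H$-module decomposition
\[
V \cong \mathfrak{g}/\mathfrak{h} \oplus W.
\]
Taking $\lambda$-weight subspaces for $\lambda \in Y(H)$ on both sides, this decomposition respects the grading, so
\[
V_\lambda^{+} = (\mathfrak{g}/\mathfrak{h})_\lambda^{+} \oplus W_\lambda^{+},
\qquad
V_\lambda^{-} = (\mathfrak{g}/\mathfrak{h})_\lambda^{-} \oplus W_\lambda^{-}.
\]
In particular, $W_\lambda^{+} \oplus W_\lambda^{-}$ is a subspace of $V_\lambda^{+} \oplus V_\lambda^{-}$, so $\dim_{\C}(V_\lambda^+ \oplus V_\lambda^-) \geq \dim_{\C}(W_\lambda^+ \oplus W_\lambda^-)$.

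Next, view $\lambda$ as a one-parameter subgroup of $G$ and apply \eqref{eq:MostGoEst1} to $V$ and $\lambda$:
\[
\dim_{\C} V_\lambda^{-} \;\geq\; \frac{\dim_{\C}(V_\lambda^{+} \oplus V_\lambda^{-})}{d}.
\]
Substituting the slice decomposition into the left-hand side and using the inequality from the previous step on the right-hand side yields
\[
\dim_{\C}\bigl(W_\lambda^{-} \oplus (\mathfrak{g}/\mathfrak{h})_\lambda^{-}\bigr) \;\geq\; \frac{\dim_{\C}(W_\lambda^{+} \oplus W_\lambda^{-})}{d},
\]
which is exactly the claim.

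There is no real obstacle here; the corollary is essentially a formal consequence of \eqref{eq:MostGoEst1} combined with the Luna slice decomposition. The only point worth emphasizing is why the same constant $d$ (which depends on $G$, not on the possibly smaller group $H$) suffices: one applies the estimate not to $W$ with $H$-action, but to the enveloping $G$-module $V$, where $\lambda$ is available as a one-parameter subgroup of $G$. The remainder is bookkeeping of dimensions of weight subspaces.
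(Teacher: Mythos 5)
Your proposal is correct and follows essentially the same route as the paper: the paper's proof likewise views $\lambda\in Y(H)$ as a one-parameter subgroup of $G$, applies the estimate \eqref{eq:MostGoEst1} to $V$, and uses the $H$-module decomposition $V\simeq\mathfrak{g}/\mathfrak{h}\oplus W$ from the slice theorem. Your write-up merely spells out the weight-space bookkeeping that the paper leaves implicit.
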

\begin{proof}
This follows from Equation \eqref{eq:MostGoEst1} and the fact that
$V\simeq \mathfrak{g}/\mathfrak{h}\oplus W$ as $H$-module.
\end{proof}

We also have the following bound on the codimension of the null cone
$\NN(W)$ of $W$. Let $\widehat{W}$ be an $H$-complement of $W^H$ so that $W = W^H\oplus \widehat{W}$.
Here, we use the notation $\codim(A,B)$ for the codimension of
$A$ in $B$ to avoid ambiguities.

\begin{lemma}
\label{lem:MostNullBound1}
Suppose that $\dim_{\C} H > 0$. Then
\begin{equation}
\label{eq:MostNullFirst}
    \codim_\C\big(\NN(\widehat{W}),\widehat{W}\big)
        \geq \min_{\lambda\in Y(H)}\left\{ \dim_{\C} (W_\lambda^- \oplus \widehat{W}_\lambda^0)\right\} - \dim_{\C} U
\end{equation}
where $U$ is a maximal unipotent subgroup of $H$.
\end{lemma}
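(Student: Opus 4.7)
The plan is to combine the Hilbert--Mumford criterion with the standard parabolic estimate for the dimension of $H$-saturations of positive weight subspaces. By Hilbert--Mumford, a point $w\in\widehat W$ lies in $\NN(\widehat W)$ if and only if there is a $1$-parameter subgroup $\lambda\in Y(H)$ with $\lim_{t\to 0}\lambda(t)w=0$, which is equivalent to $w\in\widehat W_\lambda^+$. Hence
\[
    \NN(\widehat W)=H\cdot\bigcup_{\lambda\in Y(H)}\widehat W_\lambda^+.
\]
After conjugating every $\lambda$ into a fixed maximal torus $T\subset H$ (which only translates the subspace $\widehat W_\lambda^+$ by an element of $H$ and therefore leaves the $H$-saturation unchanged), the subspaces $\widehat W_\lambda^+$ are determined by a subset of the finite set of $T$-weights appearing on $\widehat W$, so the union is effectively finite and
\[
    \dim_\C\NN(\widehat W)=\max_{\lambda\in Y(T)}\dim_\C\bigl(H\cdot\widehat W_\lambda^+\bigr).
\]

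Next, I would estimate each $\dim_\C(H\cdot\widehat W_\lambda^+)$ using the associated parabolic $P(\lambda)=\{g\in H:\lim_{t\to 0}\lambda(t)g\lambda(t)^{-1}\text{ exists}\}$. A direct computation writing $\lambda(t)gw=(\lambda(t)g\lambda(t)^{-1})(\lambda(t)w)$ shows that if $g\in P(\lambda)$ and $w\in \widehat W_\lambda^+$, then $\lim_{t\to 0}\lambda(t)(gw)=0$, so $P(\lambda)$ preserves $\widehat W_\lambda^+$. The multiplication map $H\times\widehat W_\lambda^+\to\widehat W$ therefore descends to $H\times^{P(\lambda)}\widehat W_\lambda^+\to\widehat W$, and comparing dimensions gives
\[
    \dim_\C\bigl(H\cdot\widehat W_\lambda^+\bigr)\leq \dim_\C H-\dim_\C P(\lambda)+\dim_\C\widehat W_\lambda^+.
\]
The quantity $\dim_\C H-\dim_\C P(\lambda)$ equals the dimension of the unipotent radical of the parabolic opposite to $P(\lambda)$, which is a unipotent subgroup of $H$ and is therefore bounded above by $\dim_\C U$ (since any unipotent subgroup is contained in some Borel).

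Finally, I would assemble the pieces. Because $W^H\subseteq W_\lambda^0$, the positive and negative weight spaces of $\lambda$ on $W$ lie in $\widehat W$, so $\widehat W_\lambda^-=W_\lambda^-$ and
\[
    \dim_\C\widehat W_\lambda^+=\dim_\C\widehat W-\dim_\C\bigl(W_\lambda^-\oplus\widehat W_\lambda^0\bigr).
\]
Substituting into the previous inequality and maximizing over $\lambda$ yields
\[
    \dim_\C\NN(\widehat W)\leq \dim_\C U+\dim_\C\widehat W-\min_{\lambda\in Y(H)}\dim_\C\bigl(W_\lambda^-\oplus\widehat W_\lambda^0\bigr),
\]
which rearranges to the claimed codimension bound. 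The main technical point to verify carefully is the $P(\lambda)$-stability of $\widehat W_\lambda^+$ together with the finiteness of the effective index set of $\lambda$'s; both are standard in the Hesselink/Kirwan stratification theory, so the estimate itself is essentially a bookkeeping exercise once these inputs are in place.
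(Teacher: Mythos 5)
Your proof is correct and follows essentially the same route as the paper: the Hilbert--Mumford criterion reduces $\NN(\widehat W)$ to a finite union of saturations $H\cdot\widehat W_\lambda^+$, whose dimensions are bounded using stability of the positive weight space under a Borel/parabolic together with the opposite unipotent group of dimension at most $\dim_\C U$, and the identity $\codim_\C(W_\lambda^+,\widehat W)=\dim_\C(W_\lambda^-\oplus\widehat W_\lambda^0)$ finishes the count. Your use of the full parabolic $P(\lambda)$ and the fibration $H\times^{P(\lambda)}\widehat W_\lambda^+\to H\cdot\widehat W_\lambda^+$, in place of the Borel subgroup invoked in the paper, is only a minor (slightly sharper) variant of the same dimension estimate.
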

Note that $\NN(\widehat{W}) = \NN(W)$.
\begin{proof}
Let $T$ be a maximal torus of $H$ and let $Y^\prime(T)$ denote the set of $\lambda\in Y(T)$
which are in the positive Weyl chamber.
Note that for $\lambda\in Y(H)$, $\widehat{W}_\lambda^{\pm} = W_\lambda^{\pm}$.
By the Hilbert-Mumford criterion \cite[Theorem 5.2]{PopovVinberg},
there are finitely many $\lambda\in Y^\prime(T)$ such that $\NN(\widehat{W})$
is the union of the $H W_\lambda^+$. As each $W_\lambda^+$ is stabilized by a Borel
subgroup $B$ of $H$, $\dim_{\C} H W_\lambda^+ = \dim_{\C} U W_\lambda^+$
where $U$ is the unipotent subgroup opposite to the unipotent radical of $B$.
Then the fact that
$\codim_{\C}(W_\lambda^+, \widehat{W}) = \dim_{\C} (W_\lambda^-\oplus \widehat{W}_\lambda^0)$
completes the proof.
\end{proof}

Using Corollary \ref{cor:MostSliceBound}, we can rewrite Equation \eqref{eq:MostNullFirst}
in Lemma \ref{lem:MostNullBound1} as
\begin{align*}
    \codim_\C\big(\NN(\widehat{W}),\widehat{W}\big)
        &\geq
            \min_{\lambda\in Y(H)}\left\{
                \frac {\dim_{\C} (W_\lambda^+ \oplus W_\lambda^-)}{d}
                + \dim_{\C} \widehat{W}_\lambda^0
                - \dim_{\C} U - \dim_{\C} (\mathfrak{g}/\mathfrak{h}) \right\}
        \\ &\geq
            \frac {\dim_{\C} \widehat{W}}{d} - \dim_{\C} U - \dim_{\C} \mathfrak{g}.
\end{align*}
Because $V \simeq W^H\oplus \widehat{W}\oplus (\mathfrak{g}/\mathfrak{h})$ as
$H$-module, we have
\[
    \dim_{\C}\widehat{W}
        =           \codim_{\C}(W^H, V) - \dim_{\C} (\mathfrak{g}/\mathfrak{h})
        \geq        \codim_{\C}(V^H, V) - \dim_{\C} (\mathfrak{g}/\mathfrak{h}).
\]
By \cite[Proposition 6]{Go}, as soon as $\dim_{\C} V \geq \dim_{\C} G$, we have for any $L \leq G$
with nontrivial action on $V$ that
\begin{equation}
\label{eq:MostGoEst2}
    \codim_{\C}(V^L, V)
        \geq    \frac{\sqrt{\dim_{\C} V}}{17}.
\end{equation}
Combining these observations with $L = H$ yields the following.

\begin{corollary}
\label{cor:MostNullBound2}
If $\dim_{\C} V \geq \dim_{\C} G$, then
\[
    \codim_\C\big(\NN(\widehat{W}),\widehat{W}\big)
        \geq \frac {\sqrt{\dim_{\C} V}}{17d} - \dim_{\C} U
        - 2\dim_{\C} \mathfrak{g}.
\]
\end{corollary}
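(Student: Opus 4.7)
The plan is to chain together the three estimates already assembled in the paragraphs immediately preceding the corollary; no new idea is required, and the proof is essentially bookkeeping. Specifically, I would combine (i) the refined form of Lemma \ref{lem:MostNullBound1}, (ii) the dimension inequality coming from the $H$-module decomposition of $V$, and (iii) the codimension bound \eqref{eq:MostGoEst2} with $L = H$.

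Concretely, from Lemma \ref{lem:MostNullBound1} together with Corollary \ref{cor:MostSliceBound}, using $\widehat{W}_\lambda^\pm = W_\lambda^\pm$ and $\widehat{W} = \widehat{W}_\lambda^- \oplus \widehat{W}_\lambda^0 \oplus \widehat{W}_\lambda^+$ to remove the $\lambda$-dependence, I obtain
\[
    \codim_\C\big(\NN(\widehat{W}), \widehat{W}\big) \geq \frac{\dim_{\C}\widehat{W}}{d} - \dim_{\C} U - \dim_{\C}\mathfrak{g},
\]
which is the last inequality displayed in the text. Next, the $H$-module decomposition $V \simeq \mathfrak{g}/\mathfrak{h} \oplus W^H \oplus \widehat{W}$ yields $V^H = (\mathfrak{g}/\mathfrak{h})^H \oplus W^H$ (since $\widehat{W}^H = 0$), whence
\[
    \dim_{\C}\widehat{W} = \codim_{\C}(V^H, V) - \codim_{\C}\big((\mathfrak{g}/\mathfrak{h})^H, \mathfrak{g}/\mathfrak{h}\big) \geq \codim_{\C}(V^H, V) - \dim_{\C}\mathfrak{g}.
\]
The hypothesis $\dim_{\C} V \geq \dim_{\C} G$, together with the fact that $H$ acts nontrivially on $V$ (because $\dim_{\C} H > 0$ and each irreducible component of $V$ is a faithful $\mathfrak{g}$-module, so $\mathfrak{h}$ cannot sit in the kernel of the action), permits applying \eqref{eq:MostGoEst2} with $L = H$ to obtain $\codim_{\C}(V^H, V) \geq \sqrt{\dim_{\C} V}/17$.

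Substituting these bounds into one another and invoking $d \geq 2$, so that $\dim_{\C}\mathfrak{g}/d \leq \dim_{\C}\mathfrak{g}$, gives
\[
    \codim_\C\big(\NN(\widehat{W}), \widehat{W}\big) \geq \frac{\sqrt{\dim_{\C} V}}{17d} - \dim_{\C} U - 2\dim_{\C}\mathfrak{g},
\]
as required. Since the corollary is a pure assembly of existing estimates, there is no substantial obstacle; the only mild subtlety worth making explicit is the verification that $H$ acts nontrivially, which is what justifies invoking \eqref{eq:MostGoEst2}.
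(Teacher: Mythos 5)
Your proof is correct and follows essentially the same route as the paper, which simply chains the displayed consequence of Lemma \ref{lem:MostNullBound1} and Corollary \ref{cor:MostSliceBound}, the bound $\dim_{\C}\widehat{W}\geq \codim_{\C}(V^H,V)-\dim_{\C}\mathfrak{g}$ from the decomposition $V\simeq W^H\oplus\widehat{W}\oplus\mathfrak{g}/\mathfrak{h}$, and Equation \eqref{eq:MostGoEst2} with $L=H$; your explicit remark that $H$ acts nontrivially (via the standing faithfulness hypothesis and $\dim_{\C}H>0$) is a harmless clarification of what the paper leaves implicit.
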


With this, we proceed with the proof of the main result of this section.

\begin{proof}[Proof of Theorem \ref{thrm:MostVLarge}]
Let $v \in V$ have closed $G$-orbit, let $H = G_v$, let $W$ be the slice representation at $v$, and
write $W = W^H\oplus \widehat{W}$ as above. Recall from Section \ref{subsec:KLargeDual} that $(V\git G)_{(H)}$ denotes
the stratum of points with isotropy group conjugate to $H$ and $V^{(H)}$ denotes its inverse image in $V$.
It follows from Luna's slice theorem \cite{LunaSlice} that the codimension of $V^{(H)}$ in $V$ is equal
to the codimension of the null cone $\NN(\widehat{W})$ in $\widehat{W}$. If $H$ is finite and nontrivial,
then this is just the codimension of $W^H$ in $W$, which is bounded below by the codimension of $V^H$ in $V$
minus the dimension of $\mathfrak{g}$.
Applying Equation \eqref{eq:MostGoEst2}, we have that
$\codim_{\C} V^{(H)} \to\infty$ as $\dim_{\C} V \to \infty$.
If $\dim_{\C} H > 0$, then $\codim_{\C}(\NN(\widehat{W}),\widehat{W})\to\infty$ as $\dim_{\C}V\to\infty$
by Corollary \ref{cor:MostNullBound2} and Equation \eqref{eq:MostGoEst2}.

Now, note that $V\smallsetminus V_\pr$ is the union of the $V^{(H)}$ for non-principal isotropy
types $(H)$ corresponding to closed orbits. The estimates above imply that $V$ is $k$-principal
for $\dim_{\C} V$ sufficiently large. Similarly, for all but finitely many $V$, each $V^{(H)}$
with $H$ infinite has positive codimension. As the complement of the $V^{(H)}$ consists of points
with closed orbit and finite isotropy, all but finitely many $V$ have FPIG.
Then by the proof of \cite[Corollary 11.6]{GWSlifting},
replacing the inequality $\kappa_m(V) \geq 2$ with $\kappa_m(V) \geq k$,
there are up to isomorphism only finitely many $V$ which
are not $k$-modular. Thus all but finitely many V are $k$-large.
\end{proof}

% xxxxxxxxxxxxxxxxxxxxxxxxxxxxxxxxxxxxxxxxxxxxxxxxxxxxxxxxxxxxxxxxxxxxxxxxx

\subsection{The symplectic slice theorem for complex symplectic quotients}
\label{subsec:SympSlice}

In this section, we adapt Luna's \'etale slice theorem \cite{LunaSlice} to the context of
the Hamiltonian actions considered in this paper.
We use this formulation to demonstrate that for $1$-large $(V,G)$
such that $N$ is normal and $(N\git G)_{\pr} = (N\git G)_{\sm}$, the symplectic form on
$V\oplus V^\ast$ induces a symplectic form on the smooth points of the complex symplectic quotient.
This strengthens the criteria given in \cite[Proposition 2.4]{Becker}; see also
\cite[Lemma 2.5]{DoranHoskins}.
The results of this section are closely related to those of \cite[Section 6]{BuloisLehnTerpereau}. We give a slightly more explicit description of the shell in the slice representation in order to understand how the slice representation
inherits properties
such as $1$-modular from $(V,G)$, and as well to determine bounds on the codimensions of the non-principal points.

We begin with the following observations regarding the structure of $G$-modules in the
presence of invariant bilinear forms.

\begin{lemma}
\label{lem:WWstar}
Let $V$ be a $G$-module which admits non-degenerate $G$-invariant bilinear forms $\omega$ and $\sigma$
such that $\omega$ is skew-symmetric and $\sigma$ is symmetric. Then $V\simeq W\oplus W^\ast$ for some
$G$-module $W$, and $W$ and $W^\ast$ are isotropic relative to $\omega$.
\end{lemma}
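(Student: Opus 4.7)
The plan is to exploit the semisimplicity of the category of $G$-modules to reduce the problem to an analysis of individual $G$-isotypic components of $V$. Write $V = \bigoplus_\lambda V_\lambda$ with $V_\lambda \simeq U_\lambda \otimes E_\lambda$, where the $U_\lambda$ are pairwise non-isomorphic simple $G$-modules and the $E_\lambda$ are multiplicity spaces. By Schur's lemma, any $G$-invariant bilinear form on $V$ pairs $V_\lambda$ nontrivially only with the isotypic component of the dual simple $U_\lambda^\ast$; non-degeneracy of $\omega$ therefore forces the set of simples appearing in $V$ to be closed under $U \mapsto U^\ast$ and imposes $\dim_\C E_\lambda = \dim_\C E_{\lambda^\ast}$ for every such pair. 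The construction of $W$ will then be carried out one isotypic pair at a time and assembled at the end.

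For a non-self-dual pair $U_\lambda \not\simeq U_\lambda^\ast$ I would simply declare $V_\lambda$ to belong to $W$ and $V_{\lambda^\ast}$ to belong to the complement: the restriction of any $G$-invariant bilinear form to $V_\lambda \times V_\lambda$ lies in $\Hom^G(V_\lambda, V_\lambda^\ast) = 0$, so both pieces are automatically $\omega$-isotropic, and $\omega$ identifies $V_{\lambda^\ast}$ with $V_\lambda^\ast$ as a $G$-module. For a self-dual isotypic component $V_\lambda = U \otimes E$ (so $U \simeq U^\ast$), I would fix a generator $\tau$ of the one-dimensional space $\operatorname{Bil}^G(U)$, use the isomorphism $\operatorname{Bil}^G(V_\lambda) \simeq \operatorname{Bil}(E)$ sending $M$ to $\tau \otimes M$ to translate $\omega|_{V_\lambda}$ and $\sigma|_{V_\lambda}$ into non-degenerate forms $M_\omega, M_\sigma$ on $E$, and observe that, depending on whether $\tau$ is symmetric or skew, the symmetry types of $M_\omega, M_\sigma$ either match or swap those of $\omega, \sigma$. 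In either case $E$ carries both a non-degenerate symmetric and a non-degenerate skew bilinear form, so $\dim_\C E$ is even; I then pick an $M_\omega$-Lagrangian subspace $L \subset E$ together with a Lagrangian complement $L'$ and set $W_\lambda = U \otimes L$ with complementary piece $U \otimes L'$.

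Assembling $W$ as the direct sum of the $W_\lambda$ over all isotypic components (with complement the direct sum of the corresponding $U \otimes L'$ and $V_{\lambda^\ast}$ pieces), the cross-terms between distinct isotypic summands vanish immediately by Schur, so $W$ and its complement are globally $\omega$-isotropic and the complement is $G$-isomorphic to $W^\ast$ via $\omega$. The one conceptual pinch-point, and the only place where the hypothesis on $\sigma$ is genuinely needed, is the self-dual symplectic case: there $\omega$ alone induces only a symmetric form on the multiplicity space $E$, imposing no parity constraint, so without $\sigma$ one could have a single symplectic-type simple paired to an odd-dimensional $E$ (as with a single copy of the defining $\SL_2(\C)$-module) and no Lagrangian decomposition could exist. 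The form $\sigma$ precisely supplies the non-degenerate skew form $M_\sigma$ on $E$ whose existence forces $\dim_\C E$ to be even.
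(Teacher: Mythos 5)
Your proof is correct, but it follows a genuinely different route from the paper's. The paper gives a one-paragraph linear-algebra argument: it defines the $G$-equivariant operator $\tau\co V\to V$ by $\omega(v,v')=\sigma(\tau(v),v')$ (in matrices $\tau=-\Sigma^{-1}\Omega$), observes that $\tau$ is invertible and skew-adjoint with respect to $\sigma$, so its eigenvalues occur in pairs $\pm\mu$ with $\mu\neq 0$, and takes $V_+$ (resp.\ $V_-$) to be the sum of the generalized eigenspaces for one member of each pair (resp.\ the other). Since both $\omega$ and $\sigma$ can only pair the $\mu$- and $(-\mu)$-eigenspaces, $V_\pm$ are $G$-stable, $\omega$-isotropic, and dually paired by $\omega$, giving $V\simeq V_+\oplus(V_+)^\ast$ with no case analysis. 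You instead decompose $V$ into isotypic components, invoke Schur's lemma to see which components $\omega$ and $\sigma$ can pair, split into non-self-dual pairs versus self-dual simples of orthogonal or symplectic type, and transfer the forms to the multiplicity space $E$, where the skew one among $M_\omega,M_\sigma$ forces $\dim_\C E$ even and a hyperbolic (Lagrangian-plus-Lagrangian-complement) splitting does the rest; your assembly of the global $W$ and the verification that all cross-terms vanish are sound, and the implicit facts you use (non-degeneracy of the restrictions to the relevant blocks, one-dimensionality of the space of invariant forms on a self-dual simple, existence of hyperbolic bases over $\C$) are all standard. What each approach buys: the paper's is shorter, uniform, and avoids the classification of self-dual types; yours is longer but makes the structure transparent and isolates exactly where the symmetric form $\sigma$ is indispensable, namely to exclude a symplectic-type self-dual simple occurring with odd multiplicity (as with a single copy of the defining $\SL_2(\C)$-module), which is a useful conceptual dividend the paper's proof does not make visible.
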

\begin{proof}
Define the linear map $\tau\co V\to V$ by $\omega(v,v^\prime)=\sigma(\tau(v),v^\prime)$ for $v$, $v^\prime\in V$.
In coordinates, if $\Omega$ and $\Sigma$ denote the matrices of $\omega$ and $\sigma$, respectively, then
$\tau$ is given by $-\Sigma^{-1}\Omega$.
Hence $\tau$ is $G$-equivariant and skew-symmetric.
The eigenvalues of $\tau$ are in pairs of the form $\pm \lambda i$ for $\lambda>0$. Thus we can write
$V = V_+\oplus V_-$ where $V_+$ (respectively $V_-$) is the sum of the eigenspaces for positive
(respectively negative) multiples of $i$. Then $\omega$ vanishes on $V_+$ and $V_-$ and therefore induces
a dual pairing of $V_+$ and $V_-$, hence $V\simeq V_+\oplus (V_+)^\ast$.
\end{proof}

\begin{corollary}
\label{cor:WWstar}
Let $V$ be a $G$-module and let $U$ be a $G$-submodule such that
$\widetilde V:=V\oplus V^\ast$ contains a submodule of the form $\widetilde U\simeq U\oplus U^\ast$.
Then there is a $G$-module $W$ and a $G$-submodule $\widetilde W\subset \widetilde V$ of the form
$\widetilde W\simeq W\oplus W^\ast$ such that $\widetilde V =\widetilde U\oplus \widetilde W$.
\end{corollary}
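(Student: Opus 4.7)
The plan is to construct $\widetilde W$ as a $G$-invariant complement of $\widetilde U$ in $\widetilde V$ and then identify it, up to $G$-module isomorphism, with a module of the form $W \oplus W^\ast$ by comparing isotypic decompositions.

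First I would invoke the reductivity of $G$: every finite-dimensional $G$-module is completely reducible, so the submodule $\widetilde U$ of $\widetilde V$ admits a $G$-invariant complement $\widetilde W$, giving $\widetilde V = \widetilde U \oplus \widetilde W$ as $G$-modules. By the same token, $U \subset V$ admits a $G$-invariant complement, which we name $W$, so that $V = U \oplus W$. Dualizing yields $V^\ast \simeq U^\ast \oplus W^\ast$, and therefore
\[
\widetilde V = V \oplus V^\ast \simeq (U \oplus U^\ast) \oplus (W \oplus W^\ast)
\]
as $G$-modules.

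Next I would combine this with the other decomposition. Using $\widetilde U \simeq U \oplus U^\ast$, we have
\[
(U \oplus U^\ast) \oplus \widetilde W \;\simeq\; \widetilde V \;\simeq\; (U \oplus U^\ast) \oplus (W \oplus W^\ast).
\]
Invoking the Krull-Schmidt theorem, equivalently the uniqueness of the isotypic decomposition of a finite-dimensional semisimple $G$-module, we cancel the common summand $U \oplus U^\ast$ to conclude $\widetilde W \simeq W \oplus W^\ast$, as required.

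The only mild subtlety, and the closest thing to an obstacle, is justifying the cancellation step. For a complex reductive group in characteristic zero this is routine, since the multiplicity of each irreducible constituent in a finite-dimensional $G$-module is a well-defined invariant. It is worth noting that Lemma \ref{lem:WWstar} is not strictly required by this argument. One could try a more geometric route by equipping $\widetilde W$ with compatible non-degenerate $G$-invariant symplectic and symmetric bilinear forms and then applying Lemma \ref{lem:WWstar}, but this is less direct: the natural symplectic and symmetric forms on $\widetilde V = V \oplus V^\ast$ need not restrict non-degenerately to an arbitrary submodule $\widetilde U$ (for example when $\widetilde U$ is contained in the Lagrangian summand $V$), so one cannot in general take $\widetilde W$ to be the $\omega$-orthogonal complement of $\widetilde U$.
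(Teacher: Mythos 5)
Your strategy (complete reducibility plus cancellation of isotypic multiplicities) is viable and genuinely different from the paper's, but as written it proves only a special case. The problematic step is ``$U\subset V$ admits a $G$-invariant complement $W$, so that $V=U\oplus W$.'' Although the wording of the corollary is ambiguous, the hypothesis in the form the paper actually needs does \emph{not} place $U$ inside $V$: in the proof of Proposition \ref{prop:SympSlices} the corollary is invoked with $U=E\simeq\mathfrak{g}/\mathfrak{l}$, the tangent space to the orbit, which is an $L$-submodule of $\widetilde V=V\oplus V^\ast$ but in general not isomorphic to a submodule of $V$ alone; correspondingly, the paper's own proof never uses $U\subset V$. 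A toy illustration of why your reading is strictly narrower: take $G=\C^\times$, let $V$ be the one-dimensional module of weight $1$ and $U$ the one-dimensional module of weight $-1$; then $U\oplus U^\ast\simeq\widetilde V$, yet $U$ does not embed in $V$, so the decomposition $V=U\oplus W$ you rely on is unavailable.

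The good news is that your counting idea repairs easily. Let $\widetilde W$ be any $G$-stable complement of $\widetilde U$ in $\widetilde V$ and, for an irreducible $\lambda$, write $m_\lambda(M)=\dim\Hom_G(\lambda,M)$. Then $m_\lambda(\widetilde W)=m_\lambda(V)+m_{\lambda^\ast}(V)-m_\lambda(U)-m_{\lambda^\ast}(U)$, which is symmetric under $\lambda\leftrightarrow\lambda^\ast$ and, when $\lambda\simeq\lambda^\ast$, equals $2\bigl(m_\lambda(V)-m_\lambda(U)\bigr)$, hence is even; choosing one member of each dual pair and half the multiplicity of each self-dual irreducible yields a $W$ with $\widetilde W\simeq W\oplus W^\ast$. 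By contrast, the paper follows Luna: it extends the canonical symplectic form of $\widetilde U\simeq U\oplus U^\ast$ by zero, picks a generic member $\omega_{t_0}$ of the pencil $t\omega_1+(1-t)\omega_2$ (nondegenerate on both $\widetilde V$ and $\widetilde U$, which is exactly how it circumvents the degenerate-restriction problem you point out), takes the $\omega_{t_0}$-orthogonal complement, and applies Lemma \ref{lem:WWstar} to the resulting invariant skew and symmetric forms. You are right that Lemma \ref{lem:WWstar} is not needed for the corrected counting argument, but your proof as submitted requires the fix above before it establishes the corollary in the generality in which the paper uses it.
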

\begin{proof}
We follow an argument from \cite[Lemma 5]{LunaClosed}. Let $\omega_1$ be a non-degenerate $G$-invariant
bilinear skew-symmetric form on $\widetilde V$ and let $\omega_2$ be a non-degenerate $G$-invariant bilinear
skew-symmetric form on $\widetilde U$. We extend $\omega_2$ by zero on a $G$-complement to $\widetilde U$ in
$\widetilde V$ to consider it as a form on $\widetilde{V}$. Consider the $G$-invariant skew-symmetric
forms $\omega_t:=t\omega_1+(1-t)\omega_2$ for $t\in \R$.
For most $t$, the form $\omega_t$ is non-degenerate on both $\widetilde V$ and $\widetilde U$. Let $t_0$ be such
a $t$, and let $\widetilde W$ denote the orthogonal complement to $\widetilde U$ with respect to $\omega_{t_0}$.
Then $\omega_{t_0}$ is non-degenerate on $\widetilde W$ and induces a non-degenerate form on
$\widetilde V/\widetilde U\simeq\widetilde W$. The same argument shows that  $\widetilde V/\widetilde U$
admits a non-degenerate  $G$-invariant symmetric bilinear form, hence so does $\widetilde W$. Now we can
apply Lemma \ref{lem:WWstar}.
\end{proof}

Recall that if  $X$ and $Y$ are affine $G$-varieties, a $G$-morphism $\phi \co X\to Y$
is said to be \emph{excellent\/} if
\begin{enumerate}
\item[({\it i})] $\phi$ is \'etale,
\item[({\it ii})] the induced morphism $\phi\git G : X\git G \to Y\git G$ is \'etale, and
\item[({\it iii})] the morphism $(\phi,\pi_X) : X \to Y \times_{Y\git G} X\git G$ is an isomorphism.
\end{enumerate}
Thinking analytically, $\phi$ is \'etale if and only if it induces a locally biholomorphic map
of the complex analytic varieties associated to $X$ and $Y$. In particular, if $X$ and $Y$ are
smooth, $\phi$ is \'etale if and only if the differential of $\phi$ is an isomorphism at each
point of $X$.

\begin{lemma}
\label{lem:ExcellentPullback}
Let $X$ and $Y$ be smooth affine $G$-varieties. Let $\omega$ be a symplectic form on $Y$ with respect
to which $Y$ is Hamiltonian with moment map $\mu\co Y\to\mathfrak{g}^\ast$. If
$\phi\co X\to Y$ is excellent, then $X$ is Hamiltonian with respect to the symplectic form
$\phi^*\omega$ and moment map $\phi^*\mu$.
\end{lemma}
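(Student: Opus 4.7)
The plan is to verify the three pieces of a Hamiltonian structure on $X$---that $\phi^*\omega$ is a $G$-invariant symplectic form, and that $\phi^*\mu$ satisfies the moment map equation---each of which follows almost formally from $\phi$ being \'etale and $G$-equivariant. Conditions ({\it ii}) and ({\it iii}) of excellence will not be used for this lemma; only ({\it i}) is needed.

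First I would check that $\phi^*\omega$ is a symplectic form on $X$. Closedness is automatic from naturality of the exterior derivative: $d(\phi^*\omega)=\phi^*(d\omega)=0$. For non-degeneracy, the \'etale hypothesis guarantees that $d\phi_x\co T_xX\to T_{\phi(x)}Y$ is a linear isomorphism at every $x\in X$, so pulling back the non-degenerate form $\omega_{\phi(x)}$ through this isomorphism yields a non-degenerate bilinear form on $T_xX$. The $G$-invariance of $\phi^*\omega$ then comes from equivariance of $\phi$ and invariance of $\omega$: for $g\in G$ one has $g^*(\phi^*\omega)=(\phi\circ g)^*\omega=(g\circ \phi)^*\omega=\phi^*(g^*\omega)=\phi^*\omega$.

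The main remaining step is to verify the moment map identity $\iota_{A_X}(\phi^*\omega)=d((\phi^*\mu)^A)$ for each $A\in\mathfrak g$, where $A_X$ and $A_Y$ denote the fundamental vector fields on $X$ and $Y$ induced by $A$. Differentiating the equivariance relation $\phi\circ\exp(tA)=\exp(tA)\circ\phi$ at $t=0$ yields $d\phi\circ A_X=A_Y\circ\phi$. Consequently, for a tangent vector $v$ at $x\in X$,
\[
(\phi^*\omega)_x(A_X(x),v)=\omega_{\phi(x)}\bigl(A_Y(\phi(x)),d\phi_x(v)\bigr)=(d\mu^A)_{\phi(x)}(d\phi_x(v))=d(\mu^A\circ\phi)_x(v),
\]
using the moment map equation $\iota_{A_Y}\omega=d\mu^A$ on $Y$ and naturality of $d$. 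Since $(\phi^*\mu)^A=\mu^A\circ\phi$, this is exactly $d((\phi^*\mu)^A)_x(v)$, as required.

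There is no genuine obstacle: the lemma is a formal consequence of the compatibility of pullback with the exterior derivative, with the fundamental-vector-field construction, and with non-degenerate bilinear forms via linear isomorphisms. The only inputs used are that $\phi$ is an \'etale $G$-equivariant morphism of smooth varieties, so in particular the proof would go through for any such $\phi$, excellent or not.
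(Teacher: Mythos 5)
Your proof is correct and takes essentially the same approach as the paper, which simply states that one checks $d((\phi^*\mu)^A)=\iota_A\phi^*\omega$ pointwise; you have filled in exactly this verification (via equivariance relating fundamental vector fields and naturality of $d$) together with the routine checks that $\phi^*\omega$ is symplectic and $G$-invariant. Your observation that only the \'etale and equivariance properties of $\phi$ are used, not the full excellence, is also accurate.
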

\begin{proof}
For each $x\in X$ and $A\in\mathfrak g$, one checks that $d((\phi^*\mu)^A)(x)=(\iota_A\phi^*\omega)(x)$.
Hence $\phi^*\mu$ is a moment mapping for the symplectic action of $G$ on $X$.
\end{proof}

Let $X$ be an affine $G$-variety, and let $x \in X$. Recall that an \'etale slice at $x$ is a
$G_x$-invariant locally closed subvariety $S^\prime$ of $X$ containing $x$ such that the morphism $G\times_{G_x} S^\prime\to X$
sending $[g,s]\mapsto gs$ is excellent. Here $G\times_{G_x} S^\prime$ is the quotient of $G\times S^\prime$ by the free
$G_x$-action $h(g,s)=(gh^{-1},hs)$ for $h\in G_x$, $g\in G$, $s\in S^\prime$. Then $[g,s]$ denotes the image of $(g,s)$ in $G\times_{G_x} S^\prime$. By Luna's \'etale slice theorem \cite{LunaSlice}, there is a
slice at each point $x$ such that the orbit $Gx$ is closed.

We will now describe the form of an \'etale
slice in the presence of a $G$-invariant symplectic structure at a point in the shell.
Let $(V,K)$ be unitary, let $x\in N \subset V\oplus V^\ast$
with $Gx$ closed,
set $L = G_x$, and let $\mathfrak{l} \subset \mathfrak{g}$ denote the Lie algebra of $L$.
Let $\perp$ denote
orthogonal with respect to the symplectic form. Since $x\in N$, $E:=T_x(Gx)$ is isotropic and we have a direct sum decomposition of $L$-modules
\[
    V\oplus V^\ast\simeq S\oplus E\oplus E^*
\]
where $S\oplus E=E^\perp$
 and
$E^*\simeq(\mathfrak g/\mathfrak l)^\ast$ is paired dually with $E\simeq \mathfrak g/\mathfrak l$
via $\omega$. Then $\omega$ restricts to a  symplectic form $\omega_S$ on $S$. We call $(S,L)$ the \emph{symplectic slice representation at $x$}.
We will show that,  up to an  excellent morphism,
$N$ near $x$ is isomorphic to $G\times_LN_S$ at $[e,0]$ where
$N_S$ is the zero set of the  moment map $\mu_S\co S\to\mathfrak l^*$.

\begin{proposition}
\label{prop:SympSlices}
Let $V$ be a $G$-module  and let $\omega$ denote the standard symplectic form on $V\oplus V^\ast$.
Let $x=(v,\xi)\in N$ such that $Gx$ is closed
with isotropy group $L$, let $(S,L)$ denote the symplectic slice representation at $x$
and let $\omega_S$ denote the restriction of $\omega$ to $S$.
Then   $S = W\oplus W^\ast$ where $W$ is an $L$-module and
$W$ and $W^\ast$ are isotropic subspaces relative to $\omega_S$.
\end{proposition}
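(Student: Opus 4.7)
The plan is to apply Corollary \ref{cor:WWstar}, with $L$ in place of $G$, to the $L$-module $\widetilde{V} := V \oplus V^\ast$. First I would verify that $E = T_x(Gx)$ is isotropic with respect to $\omega$: since $x$ lies in the zero fiber of the moment map $\mu$, the Hamiltonian action identity yields $\omega(A \cdot x, B \cdot x) = \mu^{[A,B]}(x) = 0$ for all $A, B \in \mathfrak{g}$. The direct sum decomposition $\widetilde{V} \simeq S \oplus E \oplus E^\ast$ described in the preamble to the proposition then exhibits $\widetilde{U} := E \oplus E^\ast$ as an $L$-submodule of $\widetilde{V}$ on which $\omega$ restricts to a non-degenerate form (the standard symplectic pairing between $E$ and $E^\ast$), with $S = \widetilde{U}^\perp$ its symplectic complement.

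Next I would apply Corollary \ref{cor:WWstar} with this choice of $\widetilde{U}$. The corollary produces an $L$-module $W$ and an $L$-submodule $\widetilde{W} \subset \widetilde{V}$ complementary to $\widetilde{U}$ with $\widetilde{W} \simeq W \oplus W^\ast$. Crucially, since $\omega$ is already non-degenerate on both $\widetilde{V}$ and $\widetilde{U}$, the interpolation parameter $t_0$ in the proof of Corollary \ref{cor:WWstar} can be taken equal to $1$, making the perturbed symplectic form $\omega_{t_0}$ coincide with $\omega$ itself. The invocation of Lemma \ref{lem:WWstar} inside that proof then produces the isotropic decomposition of $\widetilde{W}$ with respect to $\omega|_{\widetilde{W}}$; taking $\widetilde{W} = S$ yields the desired conclusion that $S \simeq W \oplus W^\ast$ as $L$-modules with $W$ and $W^\ast$ isotropic for $\omega_S$.

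The main technicality I expect to handle is a discrepancy between the statement of Corollary \ref{cor:WWstar}, where $U$ is taken to be a $G$-submodule of an ambient module $V$, and the present situation, in which $E$ is naturally only an $L$-submodule of $V \oplus V^\ast$ and not of $V$ itself. However, an inspection of the proof of Corollary \ref{cor:WWstar} shows that its ambient hypothesis is used only to equip $\widetilde{V}$ and $\widetilde{U}$ with a pair of non-degenerate $L$-invariant skew and symmetric forms. These forms are present here as well: $\widetilde{V}$ carries the skew form $\omega$ and the symmetric pairing $\sigma((v,\xi),(v',\xi')) = \xi(v') + \xi'(v)$, and $\widetilde{U} \simeq E \oplus E^\ast$ carries the analogous pair from its $E \oplus E^\ast$ structure, so the argument applies unchanged.
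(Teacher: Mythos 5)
Your proposal is correct and follows essentially the same route as the paper: the paper's proof also rests on Corollary \ref{cor:WWstar}, applied with $\widetilde U \simeq E\oplus E^\ast$ inside $V\oplus V^\ast$ viewed as an $L$-module, together with Lemma \ref{lem:WWstar}. The only cosmetic difference is that the paper applies Lemma \ref{lem:WWstar} afresh to $(S,\omega_S)$ (using that $S\simeq W\oplus W^\ast$ carries a non-degenerate invariant symmetric form), rather than tracking the choice $t_0=1$ inside the corollary's proof as you do; the two arguments amount to the same thing.
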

\begin{proof}
By Corollary
\ref{cor:WWstar}, $S\simeq W\oplus W^\ast$ where $W$ is an $L$-module. Since $\omega_S$ is non-degenerate,
$W$ and $W^\ast$ can be chosen to be isotropic by Lemma \ref{lem:WWstar}.
\end{proof}

Because $\omega$ is $G$-invariant, as an $L$-module, $V\oplus V^\ast$ is the orthogonal symplectic direct
sum $(E\oplus E^\ast) \oplus (W\oplus W^\ast)$.
For the $G$-action on $V\oplus V^\ast$, the slice representation $S^\prime$ at $x$ is $E^\ast\oplus S$.
We have the canonical mapping
\[
    \phi\co G\times_L(E^\ast\oplus S)\to V\oplus V^\ast,\quad [g,e^\ast + s]\mapsto g(x+e^\ast+s).
\]
By \cite[Lemme Fondamental]{LunaSlice} there is a $G$-saturated neighborhood $U^\prime$ of $x\in V\oplus V^\ast$
and an $L$-saturated neighborhood $U$ of $0\in E^\ast\oplus S$ such that $\phi\co G\times_L U\to U^\prime$ is
excellent. By Lemma \ref{lem:ExcellentPullback}, $G\times_L U$ is Hamiltonian with symplectic form
$\phi^*\omega$ and moment mapping $\phi^*\mu$.

\begin{lemma}
\label{lem:SliceMoment}
There are $L$-saturated neighborhoods $Q^\prime$ of $x \in U$ and $Q$ of $0\in E^\ast\oplus S$
and an $L$-equivariant excellent morphism $\widetilde{\psi}\co Q\to Q^\prime$
such that $\widetilde{\psi}^*\phi^*\mu$ has zero set $(\{0\}\times N_S)\cap Q$.
\end{lemma}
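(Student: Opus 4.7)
\emph{Plan.} The plan is to exploit the quadratic structure of $\mu$ to put $\phi^\ast\mu$ on the slice $U$ in block form $(\alpha,\beta)$, where $\beta$ already is the desired $\mu_S$ and only the $\alpha$-block needs to be ``straightened out'' by an $L$-equivariant étale change of coordinates; Luna's fundamental lemma \cite{LunaSlice} then upgrades this local isomorphism to an excellent morphism on a suitable $L$-saturated neighborhood.

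Since $L$ is reductive, choose an $L$-invariant complement $\mathfrak{m}$ to $\mathfrak{l}$ in $\mathfrak{g}$ so that $\mathfrak{g}^\ast = \mathfrak{m}^\ast \oplus \mathfrak{l}^\ast$ as $L$-modules, and write $\phi^\ast\mu = (\alpha,\beta)$ with $\alpha \co U \to \mathfrak{m}^\ast$ and $\beta \co U \to \mathfrak{l}^\ast$. Using $\mu^A(y) = \tfrac{1}{2}\omega(Ay,y)$ and expanding at $y = x + e^\ast + s$, the facts that $x \in N$, that $Ax = 0$ for $A \in \mathfrak{l}$, that $E^\ast$ is an isotropic $L$-submodule, and that $E^\ast \perp S$ with respect to $\omega$ (from Proposition \ref{prop:SympSlices} and the orthogonal decomposition $V \oplus V^\ast = (E\oplus E^\ast) \oplus S$) kill every cross-term and give
\[
    \beta(e^\ast,s) = \mu_S(s),
\]
so $\beta$ depends only on $s$ and vanishes precisely on $N_S$. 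An analogous computation for $A \in \mathfrak{m}$ shows $d\alpha|_0$ vanishes on $S$, and, via the $L$-equivariant isomorphism $\mathfrak{m} \to E$, $A \mapsto Ax$, together with the $\omega$-duality of $E$ and $E^\ast$, its restriction to $E^\ast$ is an $L$-equivariant isomorphism $\iota \co E^\ast \to \mathfrak{m}^\ast$.

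Now define the $L$-equivariant polynomial map $F \co U \to E^\ast \oplus S$ by $F(e^\ast,s) = (\iota^{-1}\alpha(e^\ast,s),\, s)$. From the previous paragraph, $dF|_0 = \id$, so $F$ is étale at $0$. Applying Luna's fundamental lemma at the $L$-fixed point $0$ (whose $L$-orbit is trivially closed with full isotropy), we obtain $L$-saturated neighborhoods $Q^\prime \subset U$ of $0$ and $Q \subset E^\ast \oplus S$ of $0$ such that $F|_{Q^\prime} \co Q^\prime \to Q$ is an excellent $L$-equivariant isomorphism; set $\widetilde{\psi} = (F|_{Q^\prime})^{-1} \co Q \to Q^\prime$, which is then $L$-equivariant and excellent.

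To finish, unravel: from $F \circ \widetilde{\psi} = \id$ we read off $\alpha(\widetilde{\psi}(e^\ast,s)) = \iota(e^\ast)$ and $\operatorname{proj}_S \widetilde{\psi}(e^\ast,s) = s$, while $\beta$ depends only on $s$, so
\[
    \widetilde{\psi}^\ast\phi^\ast\mu(e^\ast,s)
    = \bigl(\iota(e^\ast),\, \mu_S(s)\bigr),
\]
which vanishes exactly on $(\{0\}\times N_S)\cap Q$. The main technical point is verifying the clean block form of $\phi^\ast\mu$ on the slice, which hinges on the isotropy of $E$ and $E^\ast$ and on $E^\ast\perp S$ from Proposition \ref{prop:SympSlices}; the rest is the inverse function theorem in its Luna-equivariant version.
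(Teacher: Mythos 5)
Your route is essentially the paper's: the components of your $\alpha$ are exactly the functions $f_i=\mu^{A_i}$ (with the $A_i$ representing a basis of $\mathfrak g/\mathfrak l$) that the paper uses as new coordinates alongside a basis of $S^\ast$; your computation that $d\alpha|_0$ kills $S$ and restricts to an isomorphism $\iota\co E^\ast\to\mathfrak m^\ast$ is the paper's statement that the $f_i$ have independent differentials spanning the dual of $E^\ast$; and the zero-set identification rests in both cases on $\mu^A(x+s)=\mu_S^A(s)$ for $A\in\mathfrak l$. The one place you go beyond the paper is the delicate one: you assert that Luna's fundamental lemma makes $F|_{Q^\prime}\co Q^\prime\to Q$ an \emph{isomorphism} and you set $\widetilde{\psi}=(F|_{Q^\prime})^{-1}$. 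The fundamental lemma yields excellence on a saturated neighborhood (\'etale, with \'etale quotient and the fiber-product property), not invertibility, and an \'etale morphism need not be an isomorphism on any Zariski-open, let alone $L$-saturated, neighborhood; so the inverse you take is not available as a morphism of varieties without a further argument (for instance, injectivity on a saturated neighborhood together with Zariski's main theorem). The paper does not make this claim: it only asserts an excellent morphism $\widetilde{\psi}\co Q\to Q^\prime$ pulling the $f_i$ back to linear coordinates on $E^\ast$ and fixing the $S$-coordinates, and nothing later (in particular Theorem \ref{thrm:ShellInSlice}, which only needs $\rho=\phi\circ\psi$ excellent) ever uses an inverse. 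You should either weaken your conclusion accordingly or justify the invertibility.

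A second, minor point: your identity $\beta(e^\ast,s)=\mu_S(s)$ on all of $U$ uses that $E^\ast$ is isotropic, which is not part of Proposition \ref{prop:SympSlices} (that proposition concerns $W,W^\ast\subset S$, not $E^\ast$). One can arrange it by choosing an $L$-invariant Lagrangian complement to $E$ in $S^\perp$, but it is also unnecessary: once the $\alpha$-block forces $e^\ast=0$ on the zero set, all you need is $\beta(0,s)=\mu^A(x+s)=\mu_S^A(s)$ for $A\in\mathfrak l$, which follows from $Ax=0$ and the $G$-invariance of $\omega$ alone; this weaker statement is exactly what the paper uses.
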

\begin{proof}
We make a change of coordinates on $E^*+ S$.
Let $A_1,\dots,A_m$ be elements of $\mathfrak g$ such that the $A_i+\mathfrak l$ form a basis of
$E \simeq \mathfrak{g}/\mathfrak{l}$.
Then the functions $f_i=\mu^{A_i}$ have linearly independent differentials at $x$ and span the dual of $E^\ast$.
Let $h_1,\dots,h_{2k}$ be a basis of $S^\ast$. Then the $f_i$ and $h_j$ are a local coordinate system near
$x\in x+E^\ast+ S$, and the corresponding change of coordinates is equivariant relative to the action of $L$.
By \cite[Lemme Fondamental]{LunaSlice} we have an excellent mapping $\widetilde{\psi}$ of a neighborhood $Q$ of
$0\in E^\ast\times S$ to a neighborhood $Q^\prime$ of $x$ in $x+E^\ast+S$,
which we identify with $E^\ast \oplus S$,
such that the pull-backs of the $f_i$ form
a linear coordinate system on $E^\ast$ and the pull-back of each $h_i$ is $h_i$.
By shrinking if necessary, we may assume $Q^\prime\subset U$.
Evidently, the zero set
of the $f_i$ is $\{0\}\times S$. Let $A\in\mathfrak l$. Then the zero set of $\widetilde{\psi}^*\phi^*\mu^A$ on
$\{0\}\times S$ is just the zero set of $\mu^A$ on $x + S\subset V\oplus V^\ast$. This establishes the lemma.
\end{proof}

Note that $\widetilde{\psi}$ induces an excellent mapping $\psi$ of
$G\times_L Q\to G\times_L Q^\prime \subset G\times_L U$.
Finally, we have the following symplectic slice theorem.

\begin{theorem}
\label{thrm:ShellInSlice}
Let $x$ be a closed orbit in $N$ with isotropy group $L$. Let $(S,L)$ denote the symplectic slice representation at $x$ with its induced Hamiltonian structure as above. Set $E:=T_x(Gx)$. Then there is an $L$-saturated neighborhood $Q$ of $0\in E^\ast\oplus S$ and an excellent morphism $\rho\co G\times_L Q\to V\oplus V^*$
such that $\rho([e,0]) = x$, and if $G\times_LQ$ is given the induced Hamiltonian structure via $\rho$, then the shell of $G\times_LQ$ is
$G\times_L (Q\cap (\{0\}\times N_S))$. The induced mapping
\[
    \widetilde{\rho} \co G\times_L(Q\cap (\{0\}\times N_S))\to N
\]
is excellent.
\end{theorem}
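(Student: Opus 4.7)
The plan is to set $\rho := \phi \circ \psi$, where $\phi \co G \times_L U \to V \oplus V^\ast$ is the excellent morphism from Luna's slice theorem introduced before Lemma~\ref{lem:SliceMoment} (given by $[g, e^\ast + s] \mapsto g(x + e^\ast + s)$), and $\psi \co G \times_L Q \to G \times_L Q^\prime$ is the $G$-equivariant extension of the coordinate change $\widetilde{\psi}$ produced in Lemma~\ref{lem:SliceMoment}. Since the composition of excellent morphisms is excellent, $\rho$ is excellent; and because $\widetilde{\psi}(0) = 0$ by construction (the $f_i$ and $h_j$ all vanish at the chosen origin), we have $\rho([e, 0]) = \phi([e, 0]) = x$.

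Next, I would apply Lemma~\ref{lem:ExcellentPullback} to equip $G \times_L Q$ with a Hamiltonian structure whose moment map is $\rho^\ast \mu$, so that the shell of $G \times_L Q$ is the zero set of $\rho^\ast \mu$. By $G$-equivariance of the moment map this zero set is $G$-saturated, so it suffices to determine its intersection with the fiber $\{[e, q] : q \in Q\} \simeq Q$. For $q \in Q$ one computes
\[
    (\rho^\ast \mu)([e, q]) = \mu(x + \widetilde{\psi}(q)) = (\widetilde{\psi}^\ast \phi^\ast \mu)(q),
\]
whose zero set on $Q$ equals $(\{0\} \times N_S) \cap Q$ by Lemma~\ref{lem:SliceMoment}. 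Hence the shell of $G \times_L Q$ is $G \times_L (Q \cap (\{0\} \times N_S))$, establishing the second claim.

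Finally, to show that $\widetilde{\rho}$ is excellent, I would invoke the general principle that the restriction of an excellent $G$-morphism to the preimage of a closed $G$-subvariety is excellent. Since the shell of $G \times_L Q$ equals $\rho^{-1}(N)$, the morphism $\widetilde{\rho}$ is the base change of $\rho$ along the closed immersion $N \hookrightarrow V \oplus V^\ast$. \'Etaleness of $\widetilde{\rho}$ then follows from stability of \'etale morphisms under base change, giving condition~(i). For conditions~(ii) and~(iii), one uses the identification $(G \times_L Q) \git G \simeq Q \git L$, passes to the closed subvariety $N \git G \subset (V \oplus V^\ast) \git G$, and observes that the fiber-product isomorphism $G \times_L Q \simeq (V \oplus V^\ast) \times_{(V \oplus V^\ast) \git G} (Q \git L)$ given by excellence of $\rho$ restricts over $N$ to the analogous isomorphism for $\widetilde{\rho}$. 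The main obstacle is precisely this last verification: checking that all three conditions for excellence descend to the preimage of $N$ requires careful tracking of how the moment-map equations interact with the GIT quotient of the slice representation, and of the compatibility of the two fiber-product decompositions.
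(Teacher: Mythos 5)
Your proposal is correct and follows essentially the same route as the paper: take $\rho=\phi\circ\psi$, use Lemma \ref{lem:SliceMoment} to identify the zero set of $\rho^\ast\mu=\widetilde{\psi}^\ast\phi^\ast\mu$ with $G\times_L(Q\cap(\{0\}\times N_S))$, and deduce excellence of $\widetilde{\rho}$ from the fact that an excellent morphism restricts to an excellent morphism over the preimage of a closed invariant subvariety. The ``main obstacle'' you flag at the end is not one: that restriction principle is a standard fact, which the paper simply cites (\cite[Section 6.2]{PopovVinberg}), so your base-change sketch of conditions ({\it i})--({\it iii}) need not be carried out by hand.
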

\begin{proof}
It follows from
Lemma \ref{lem:SliceMoment} that $\rho=\phi\circ\psi$ has the desired properties.
In particular, as the shell of $G\times_L Q$ is $G$-invariant, it is determined by its intersection with $Q$,
which is given by $(\{0\}\times N_S)\cap Q$, and $\rho^{-1}(N) = G\times_L(\widetilde{\psi}^*\phi^*\mu^{-1}(0))$.
Since $\rho$ is excellent, $\rho\inv(N)\to N$ is excellent by \cite[Section 6.2]{PopovVinberg},
showing that $\widetilde\rho$ is excellent.
\end{proof}

We now consider the properties of $(V,G)$ that are inherited by $(W,L)$ for a symplectic slice
representation $W\oplus W^\ast$ corresponding to the isotropy group $L$.

\begin{corollary}
\label{cor:ShellSliceProperties}
Let $(P)$ be any of the following properties: reduced,
smooth, or normal. Then $N_S$ has
$(P)$ at $0$ if and only if $N$ has $(P)$ at $x$. Similarly, $N_S\git L$ has (P) at the image of $0$
if and only if $N\git G$ has (P) at the image of $x$.
The complex codimension of $(N\git G)_{(L)}$ in $N\git G$ is the same as the complex codimension of $(N_S\git L)_{(L)}$ in $N_S\git L$, and the complex codimension of $N^{(L)}$ in $N$ is the same as the complex codimension of
$N_S^{(L)}$ in $N_S$.
\end{corollary}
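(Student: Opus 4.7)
The plan is to deduce the corollary directly from Theorem~\ref{thrm:ShellInSlice}, which provides an excellent $G$-morphism
\[
    \widetilde{\rho}\co G\times_L\bigl(Q\cap(\{0\}\times N_S)\bigr)\to N
\]
sending $[e,0]$ to $x$. By parts ({\it i}) and ({\it ii}) of the definition of an excellent morphism, $\widetilde{\rho}$ is \'etale and the induced morphism on GIT quotients is \'etale, so that the properties of being reduced, smooth, or normal, all of which are \'etale-local, transfer between $N$ at $x$ and $G\times_L(Q\cap N_S)$ at $[e,0]$, and between $N\git G$ at the image of $x$ and $\bigl(G\times_L(Q\cap N_S)\bigr)\git G$ at the image of $[e,0]$.

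The second step is to pass from $G\times_L Y$ down to $Y$, where $Y=Q\cap N_S$. Because $L$ acts freely on $G\times Y$ by $h\cdot(g,y)=(gh^{-1},hy)$, the projection $G\times Y\to G\times_L Y$ is a principal $L$-bundle, in particular smooth and faithfully flat; the target has property $(P)$ at $[e,0]$ if and only if the source does at $(e,0)$, and since $G$ is smooth this is in turn equivalent to $Y$ having $(P)$ at $0$. This yields the equivalence for $N$ versus $N_S$. For the GIT quotients, the canonical identification $\C[G\times_L Y]^G\simeq\C[Y]^L$ gives $\bigl(G\times_L Y\bigr)\git G\simeq Y\git L$, providing the corresponding equivalence for $N\git G$ versus $N_S\git L$.

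For the codimension statements, the same excellent morphism $\widetilde{\rho}$ does the work. \'Etale morphisms preserve local dimension, and by Luna's slice theorem the stratification by isotropy type pulls back compatibly under $\widetilde{\rho}$: the preimage of $N^{(L)}$ near $[e,0]$ is $G\times_L\bigl((Q\cap N_S)^{(L)}\bigr)$, and its image in the GIT quotient is the corresponding stratum of $N_S\git L$. Consequently, the local codimensions of the strata near $x$ and near $0$ agree, and similarly for the quotient strata, establishing the final assertions (where \emph{codimension} is interpreted near the given points, which suffices since the strata in question are precisely those containing the images of $x$ and $0$).

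The main technical obstacle, modest as it is, lies in invoking the correct descent statements for reducedness, smoothness, and normality along \'etale and smooth faithfully flat morphisms, and in verifying that passing through the fiber-bundle construction $G\times_L Y$ does not spoil the property at the chosen basepoint. These are standard commutative-algebra and algebraic-geometry facts, so the proof reduces to carefully assembling them together with Theorem~\ref{thrm:ShellInSlice}.
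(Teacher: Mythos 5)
Your proposal is correct and follows essentially the same route as the paper: both rest on the excellent morphism $\widetilde{\rho}$ of Theorem~\ref{thrm:ShellInSlice}, the \'etale-local nature of reduced, smooth, and normal (on the varieties and on their quotients), and the compatibility of the isotropy-type strata for the codimension claims. The only difference is that you spell out explicitly the passage from $G\times_L(Q\cap N_S)$ at $[e,0]$ down to $N_S$ at $0$ via the principal $L$-bundle and $(G\times_L Y)\git G\simeq Y\git L$, a step the paper leaves implicit.
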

\begin{proof}
Note that the restriction of an excellent mapping to the preimage of a closed invariant set is
excellent, see \cite[Section 6.2]{PopovVinberg}.
For \'etale mappings $\sigma\co X\to Y$, a subvariety $Z$ of $Y$ has property $(P)$ at $y$
if and only if $\sigma\inv(Z)$ has property $(P)$ at a point (equivalently, every point)
of $\sigma\inv(y)$,
see \cite[Corollaire 9.2, Proposition 9.2, and Corollaire 9.10]{SGA1}.
Since $\widetilde\rho$ is \'etale, and since $\widetilde\rho$ induces an \'etale mapping on the quotients,
the claims follow.
\end{proof}

In particular, note that the shell $N$ is a cone and hence connected.
By Corollary \ref{cor:ShellSliceProperties}, if $N$ is normal, then $N_S$ is normal, implying
by Proposition \ref{prop:NormalityShell}({\it ii}) that $(W,L)$ is $1$-modular.

\begin{lemma}
Suppose that $(V,G)$ is $1$-modular with FPIG. Then the principal isotropy groups of $N$ are finite
and $(N\git G)_\pr\subset (N\git G)_\sm$.
\end{lemma}
\begin{proof}
Since $V$ is $1$-modular, $N$ is irreducible by Proposition \ref{prop:NormalityShell}({\it ii}).
As $(V,G)$ has FPIG and $N$ contains $V\times\{0\}$, $N$ has FPIG as well.
Let $Gx\subset N$ be a principal orbit. Since $G_x$  is finite,
Proposition \ref{prop:NormalityShell}({\it iv}) and ({\it v})
show that $x$ is a smooth point of $N$. Hence the slice representation at $x$ is trivial,
and hence $Gx$ is a smooth point of $N\git G$. Thus $(N\git G)_\pr\subset (N\git G)_\sm$.
\end{proof}

Now, suppose that $(V,G)$ is $1$-large, the corresponding shell $N$ is normal,
and $(N\git G)_{\sm} = (N\git G)_{\pr}$, i.e.
every smooth point of $N\git G$ corresponds to a principal orbit. Let $Gx \subset N$
be a principal orbit.
Then the symplectic form $\omega_S$ given by Proposition
\ref{prop:SympSlices} induces a symplectic form on a neighborhood of the orbit $Gx$ in
$N_{\pr}\git G = (N\git G)_{\sm}$.
Recall that $\omega_S$ is given by restriction of the standard symplectic form $\omega$ on $V\oplus V^\ast$.
Hence, $\omega_S$ can be recovered from the Poisson bracket restricted to $S$
via $\omega(X_f,X_g) = \{ f, g\}$ where $X_f$ and $X_g$ denote the Hamiltonian vector fields corresponding to
functions $f$ and $g$. In particular, as the Hamiltonian vector fields of the coordinate functions of $S$ span the
tangent space to $S$, $\omega_S$ is determined by the brackets of the coordinate functions, see Equation
\eqref{eq:PoissonC}. For
$f, g \in \C[V\oplus V^\ast]^G$ representing elements of $\C[N]^G$,
the bracket $\{f + (\mu)^G,g + (\mu)^G\}$ in $\C[N]^G$ is defined by $\{f,g\} + (\mu)^G$, see Definition
\ref{def:CSympQuot}, so that the symplectic form on the image of $S$ in $(N\git G)_{\sm}$ is that induced
by the Poisson bracket on $\C[N]^G$. Hence we have the following.

\begin{corollary}
\label{cor:SympFormSmooth}
Suppose that $(V, G)$ is $1$-large,
the corresponding shell $N$ is normal, and $(N\git G)_{\sm} = (N\git G)_{\pr}$,
and let $\omega$ denote the standard
symplectic form on $V\oplus V^\ast$. Then the Poisson bracket on the complex
symplectic quotient
$\C[N]^G$ induces a regular symplectic form on the smooth locus
$(N\git G)_{\sm}$ of the complex symplectic quotient $N\git G$ that
agrees with the restriction of the standard symplectic form $\omega$ on $V\oplus V^\ast$
to symplectic slices at points with principal isotropy.
\end{corollary}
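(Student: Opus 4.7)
The strategy is to use the symplectic slice theorem (Theorem \ref{thrm:ShellInSlice}) to reduce the question to a local calculation on a symplectic vector space, then invoke the Poisson bracket on $\C[N]^G$ to glue the local forms into a single regular symplectic form on $(N\git G)_{\sm}$.

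First, pick a closed orbit $Gx \subset N$ whose image $[x]$ lies in $(N\git G)_{\sm} = (N\git G)_{\pr}$, and let $L = G_x$. Since $(V,G)$ is $1$-large it has FPIG, so $L$ is finite; hence $\mathfrak{l} = 0$, the slice moment map $\mu_S$ vanishes identically, and $N_S = S$. By Proposition \ref{prop:SympSlices}, $S \simeq W\oplus W^\ast$ and $\omega_S := \omega|_S$ is a (closed, non-degenerate) symplectic form. Moreover, because $[x]$ sits in the principal stratum, the slice action of $L$ on $S$ must be trivial: by Luna's slice theorem the Luna stratum of isotropy type $(L)$ in $S\git L$ corresponds to $S^L$, and for this stratum to be an open neighborhood of $0$ we must have $S^L = S$.

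Second, Theorem \ref{thrm:ShellInSlice} supplies an excellent morphism $\widetilde{\rho}\co G\times_L(Q\cap S)\to N$ from an $L$-saturated neighborhood $Q$ of $0\in E^\ast\oplus S$; passing to $G$-quotients yields an \'etale map from a neighborhood of $0\in S$ onto a Zariski open neighborhood of $[x]$ in $N\git G$, which by Corollary \ref{cor:ShellSliceProperties} is indeed smooth. By Lemma \ref{lem:ExcellentPullback}, the pullback of $\omega$ along $\rho$ is the standard symplectic form on $G\times_L(Q\cap S)$, whose restriction to the slice factor is $\omega_S$. Thus near $[x]$, $(N\git G)_{\sm}$ inherits a regular symplectic form that by construction coincides with $\omega|_S$ under the slice identification.

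Finally, these local forms patch together into a single regular symplectic form on $(N\git G)_{\sm}$ because they all arise from the intrinsic Poisson bracket on $\C[N]^G$: for $f,g \in \C[N]^G$, the value $\{f,g\}([x])$ equals $\omega_S(X_f|_S, X_g|_S)$, and by Equation \eqref{eq:PoissonC} the Hamiltonian vector fields of linear coordinates on $S$ span $T_0 S$, so $\omega_S$ is completely determined by the Poisson brackets of invariants restricted to the slice. The one genuinely non-trivial step is establishing that $L$ acts trivially on $S$ when $[x]$ is principal; this is where the hypothesis $(N\git G)_{\sm} = (N\git G)_{\pr}$ is essential, since without it one could have $L$ finite but acting non-trivially on $S$, obstructing the descent of $\omega_S$ to the smooth locus of the quotient.
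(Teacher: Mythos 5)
Your proposal is correct and follows essentially the same route as the paper: the symplectic slice theorem at a closed orbit lying over a smooth (hence, by hypothesis, principal) point, finiteness of $L$ and triviality of its action on $S$, transport of $\omega_S=\omega|_S$ through the excellent morphism to a neighborhood in $(N\git G)_{\sm}$, and gluing of the local forms because they are all induced by the intrinsic Poisson bracket on $\C[N]^G$; your justification that $S^L=S$ via the stratum-codimension statement of Corollary \ref{cor:ShellSliceProperties} is a reasonable substitute for the paper's direct appeal to the fact that principal isotropy groups act trivially on slices. Only your closing aside is slightly off: a finite $L$ acting nontrivially and symplectically on $S$ contains no pseudoreflections, so $S/L$ (and hence the corresponding point of $N\git G$) would already be singular; the hypothesis $(N\git G)_{\sm}=(N\git G)_{\pr}$ is really needed to exclude smooth points with positive-dimensional isotropy, not finite ones, so that the form just constructed covers all of $(N\git G)_{\sm}$.
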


% xxxxxxxxxxxxxxxxxxxxxxxxxxxxxxxxxxxxxxxxxxxxxxxxxxxxxxxxxxxxxxxxxxxxxxxxx

\subsection{The condition $(N\git G)_{\sm} = (N\git G)_{\pr}$}
\label{subsec:Smooth=Principal}

In this section, we establish conditions under which the hypothesis $(N\git G)_{\sm} = (N\git G)_{\pr}$
of Corollary \ref{cor:SympFormSmooth} is true.

First note that by Theorem \ref{thrm:ShellInSlice}, the hypothesis $(N\git G)_{\sm} = (N\git G)_{\pr}$ is
equivalent to $N_S\git L$ being singular at the origin for every $(W,L)$ corresponding to a symplectic
slice $W\oplus W^\ast$ at a point $x \in N$ with closed orbit and non-principal isotropy group $L$. By
\cite[Lemma 2.3]{HerbigSchwarzSeaton}, $N_S\git L$ is singular if $(W,L)$ is $1$-large so that
$(N\git G)_{\sm} = (N\git G)_{\pr}$ holds whenever each of the $(W,L)$ can be chosen to be $1$-large.

We have the following, which allows us to reduce without loss of generality to the case of
connected groups.

\begin{proposition}
\label{prop:SM=PRConnected}
Let $G$ be reductive and let $V$ be a $G$-module such that the shell $N$ is normal and has FPIG.
If $N\git G$ is smooth, then $N\git G^0$ is smooth.
\end{proposition}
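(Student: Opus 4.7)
The plan is to use the symplectic slice theorem (Theorem~\ref{thrm:ShellInSlice}) to reduce to a local statement at a closed orbit, and then to analyze the finite quotient by $L/L^0$ using the induced Hamiltonian structure and the $G$-invariant Lagrangian splitting $V\oplus V^\ast$.

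Take $p\in N\git G^0$ and lift to $x\in N$ with $G^0x$ closed. Since $G^0$ has finite index in $G$, the $G$-orbit $Gx$ is a finite disjoint union of closed $G^0$-orbits and hence is itself closed. Set $L=G_x$ and $L^0 = L\cap G^0=(G^0)_x$; by Matsushima's theorem $L$ is reductive, with identity component $L^0$. Theorem~\ref{thrm:ShellInSlice} produces the symplectic slice $(S=W\oplus W^\ast,L)$ at $x$ and an excellent morphism $G\times_L(Q\cap(\{0\}\times N_S))\to N$ sending $[e,0]$ to $x$; the same bundle structure restricted to $G^0$ provides an excellent morphism $G^0\times_{L^0}(Q\cap(\{0\}\times N_S))\to N$ near $[e,0]$. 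Passing to categorical quotients, $N\git G$ is \'etale-locally $N_S\git L$ at $0$ and $N\git G^0$ is \'etale-locally $N_S\git L^0$ at $0$. Since \'etale morphisms preserve smoothness, the problem reduces to showing that if $N_S\git L$ is smooth at $0$, then $N_S\git L^0$ is smooth at $0$.

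Set $\Gamma := L/L^0$, a finite group; the Zariski tangent spaces at the origin satisfy $T_0(N_S\git L)=T_0(N_S\git L^0)^{\Gamma}$. By Corollary~\ref{cor:ShellSliceProperties}, $N_S$ is normal (so $N_S\git L^0$ is normal and, by Hochster--Roberts, Cohen--Macaulay), and FPIG transfers to $(W,L)$ via Luna's theorem. Smoothness of $N_S\git L$ at $0$ gives $\dim T_0(N_S\git L)=\dim N_S\git L=\dim N_S\git L^0$, so smoothness of $N_S\git L^0$ at $0$ is equivalent to $\Gamma$ acting trivially on $T_0(N_S\git L^0)$. Hence the proposition further reduces to proving this triviality.

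The crucial structural input is that $\Gamma$ preserves both the Poisson bracket on $N_S\git L^0$ (because $L$ preserves the symplectic form $\omega_S$ on $S$) and the Lagrangian decomposition $S=W\oplus W^\ast$ (because the $G$-action on $V\oplus V^\ast$ is the diagonal of its action on $V$ together with the induced dual action on $V^\ast$, and so no element of $G$ can swap the two Lagrangian summands). The main obstacle is to use these two constraints, together with the FPIG-supplied generic symplectic leaf, to conclude the triviality of the $\Gamma$-action on $T_0(N_S\git L^0)$. The strategy I would pursue is by contradiction: if $\Gamma$ acted nontrivially, smoothness of the $\Gamma$-quotient would, by Chevalley--Shephard--Todd applied to the completed local rings, force pseudoreflections in $\Gamma$; the Poisson- and polarization-preservation would then propagate these to pseudoreflections of the induced symplectic form on the generic symplectic leaf, yet every element of the symplectic group has determinant $1$, so the only pseudoreflection is the identity---a contradiction. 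Note that without the polarization hypothesis this conclusion would fail (for instance the Poisson $\mathbb{Z}/2$-action $(x,z)\mapsto(z,x)$, $y\mapsto -y$ on the $A_1$-singularity $\{xz=y^2\}$ has smooth quotient $\mathbb{C}^2$), but such actions swap Poisson-dual coordinates and so are excluded in our polarized setting. Making this last propagation argument precise at the possibly singular origin $0$ is the delicate step.
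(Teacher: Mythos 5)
Your argument stops exactly where the real work begins. The reduction to a local statement at each closed orbit is fine in principle (modulo the slip that $(G^0)_x=L\cap G^0$ need not be connected, so it is $L\cap G^0$ rather than $L^0$ that acts on the slice), but the crucial step --- showing that the residual finite group $\Gamma$ acts trivially on $T_0\big(N_S\git (L\cap G^0)\big)$ --- is never carried out, and the strategy you sketch for it does not work as stated. Chevalley--Shephard--Todd applies to a finite group acting linearly (or at least on a smooth germ); at a non-principal point the slice quotient $N_S\git(L\cap G^0)$ is typically singular at $0$, so ``smoothness of the $\Gamma$-quotient forces pseudoreflections in $\Gamma$'' has no meaning there, and the subsequent ``propagation to the generic symplectic leaf'' is precisely the missing argument --- indeed, the local statement at such a point is essentially the proposition itself for the pair $L\cap G^0\trianglelefteq L$, so nothing has been gained. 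Two further slips: the asserted equality $T_0(N_S\git L)=T_0(N_S\git L^0)^{\Gamma}$ is false in general (only $(T_0X)^\Gamma\hookrightarrow T_0(X/\Gamma)$ holds; take $X=\C$, $\Gamma=\{\pm1\}$), although the inclusion happens to suffice for your dimension count; and your $A_1$ illustration is miscomputed: the involution $x\leftrightarrow z$, $y\mapsto -y$ lifts to $(u,v)\mapsto(v,-u)$, the group generated is $\Z/4\subset\SL_2(\C)$, and the quotient is the $A_3$ singularity rather than $\C^2$ --- the involution with smooth quotient is the one fixing $y$, which is anti-Poisson.

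The paper avoids singular slices altogether. It applies Chevalley--Shephard--Todd only at points $x$ lying over the \emph{principal} stratum of $N\git G^0$: there $(G^0)_x$ is finite (FPIG) and acts trivially on the symplectic slice, so the local model of $N\git G$ is the \emph{linear} quotient $(W\oplus W^\ast)/G_x$ with $G_x$ finite and acting symplectically; smoothness of $N\git G$ plus the fact that a symplectic transformation cannot be a pseudoreflection forces $G_x$ to act trivially (no Lagrangian polarization is needed, only invariance of $\omega_S$). This kills the isotropy contribution but leaves a residual group $F=(G/G^0)/H$ acting \emph{freely} on $(N\git G^0)_{\pr}$, and a genuinely global step is then required: $(N\git G^0)_{\pr}$ is an $F$-covering of an open subset $U\subseteq N\git G\simeq\C^{2d}$ whose complement has complex codimension at least $2$ (even-dimensional strata), so $U$ is simply connected by \cite[Lemma 2.4]{HerbigSchwarzSeaton} and $F$ is trivial; hence $G/G^0$ acts trivially on $N\git G^0$ and $N\git G^0=N\git G$ is smooth. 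Your pointwise scheme has no counterpart of this covering-space step and, at the non-principal points where it would be needed, no tool to run the local argument, so the proposal as it stands does not prove the proposition.
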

\begin{proof}
Let $x = (v,\xi)\in N$ have image in $(N\git G^0)_{\pr}$. Then $(G^0)_x$ is finite and acts trivially
on the symplectic slice representation $W\oplus W^\ast$ at $x$. Now, $G_x$ also acts on
$W\oplus W^\ast$ and contains no pseudoreflections so that as $(W\oplus W^\ast)/ G_x$ is smooth,
by the Chevalley-Shephard-Todd theorem \cite{ShephardTodd,Chevalley}, $G_x$ must act trivially on $W\oplus W^\ast$.
Therefore, $G/G^0$ has a normal subgroup $H$ which acts
trivially on $(N\git G^0)_{\pr}$, and hence on $N\git G^0$, and the quotient group $F = (G/G^0)/H$ acts
freely on $(N\git G^0)_{\pr}$. We claim that $F$ is trivial.

Let $U$ be the open subset of $\C^{2d} = N\git G$ given by the image of $(N\git G^0)_{\pr}$. Then
$(N\git G^0)_{\pr}$ is a covering space of $U$ with covering group $F$. Because the
strata of $N\git G^0$ have even dimension, the complement of $U$ has complex codimension at least $2$ in $N\git G$.
By \cite[Lemma 2.4]{HerbigSchwarzSeaton}, $U$ is simply connected so that $F$ is trivial.
Hence $N\git G^0$ is smooth.
\end{proof}

We now prove that $(N\git G)_{\sm} = (N\git G)_{\pr}$ when $(V,G)$ is $3$-large,
see Theorem \ref{thrm:Smooth=Princ3Large} below. We begin with the following.

\begin{proposition}
\label{prop:3LargeCodim}
If $(V,G)$ is $3$-large, then for any symplectic slice representation $(S,L)$ at $x\in N$, the codimension
of $N_S\smallsetminus (N_S)_{\pr}$ in $N_S$ is at least $3$.
\end{proposition}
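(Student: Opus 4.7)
The strategy is to use the symplectic slice theorem to translate the bound on $N_S$ to a bound on $N$, and then to deduce the latter from the $3$-modular and $3$-principal parts of the $3$-large hypothesis.

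First I would apply Corollary \ref{cor:ShellSliceProperties}: for each isotropy class $(L^\prime)$ with $L^\prime \subseteq L$ occurring in $N_S$, we have $\codim_\C N_S^{(L^\prime)}$ in $N_S$ equal to $\codim_\C N^{(L^\prime)}$ in $N$. Since $(V,G)$ is $1$-modular (being $3$-large), $N$ is irreducible by Proposition \ref{prop:NormalityShell}(ii), so its principal stratum is dense and corresponds under the \'etale map from the slice theorem to the principal stratum of $N_S$ near the origin. This yields
\[
\codim_\C\bigl(N_S \setminus (N_S)_{\pr}, N_S\bigr) \geq \codim_\C\bigl(N \setminus N_{\pr}, N\bigr),
\]
so it suffices to show $\codim_\C(N \setminus N_{\pr}, N) \geq 3$ whenever $(V,G)$ is $3$-large.

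Next I would split $N \setminus N_{\pr}$ into the set $A = \{(v,\xi) \in N : \dim_\C G_{(v,\xi)} \geq 1\}$ of points with positive-dimensional isotropy and the complement $B$ of points with finite but non-principal isotropy. For $A$, since $G_{(v,\xi)} \subseteq G_v$, we have $A \subseteq \bigcup_{r \geq 1}(V_{(r)} \times V^\ast) \cap N$, and the dimension count from the proof of Proposition \ref{prop:NormalityShell}(iii) applies: on $V_{(r)} \times V^\ast$ the components $\mu^A$ for $A \in \mathfrak{g}_v$ vanish identically, so $\mu = 0$ imposes at most $\dim_\C G - r$ equations, giving $\dim_\C\bigl((V_{(r)} \times V^\ast) \cap N\bigr) \leq \dim_\C V_{(r)} + \dim_\C V^\ast - (\dim_\C G - r)$. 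Applying the $3$-modular bound $\codim_\C V_{(r)} \geq r + 3$ together with $\dim_\C N = 2\dim_\C V - \dim_\C G$ (from $0$-modular) yields $\codim_N A \geq 3$.

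For $B$, I would use that $V^\ast$ is also $3$-large by Theorem \ref{thrm:KLargeDual}(v), so both $V$ and $V^\ast$ satisfy the $3$-principal condition. The aim is to show that $B$ is contained in $\bigl((V \setminus V_{\pr}) \times V^\ast\bigr) \cap N \,\cup\, \bigl(V \times (V^\ast \setminus V^\ast_{\pr})\bigr) \cap N$, after which an analogous dimension count using the $3$-principal bound $\codim_\C(V \setminus V_{\pr}) \geq 3$ (with the same treatment of the moment-map equations along the $V_{(r)}$-strata as in the previous paragraph) yields $\codim_N B \geq 3$. The main obstacle will be establishing this inclusion, which amounts to showing that if $(v,\xi) \in N$ has $v \in V_{\pr}$ and $\xi \in V^\ast_{\pr}$, then $G_{(v,\xi)}$ is conjugate to the principal isotropy $H_N$ of $N$ and so $(v,\xi) \in N_{\pr}$. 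This is subtle because even when $G_v$ and $G_\xi$ are both conjugate to the common principal isotropy of $V$ and $V^\ast$ (conjugate by Theorem \ref{thrm:KLargeDual}(ii)), their intersection can be a proper subgroup; a generic-position argument on $N_0$ identifying $H_N$ explicitly from the $H_V$-action on a generic cotangent fiber $T_v(Gv)^\perp$ appears to be required to complete the step.
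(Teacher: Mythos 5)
Your reduction to showing $\codim_\C(N\smallsetminus N_{\pr},N)\geq 3$ via Corollary \ref{cor:ShellSliceProperties}, and your count for the set $A$ of points lying over $\bigcup_{r\geq 1}V_{(r)}$ (fiber over $v\in V_{(r)}$ is the annihilator of $T_v(Gv)$, of dimension $\dim_\C V-\dim_\C G+r$, so $3$-modularity gives codimension $\geq 3$), are both sound. The genuine gap is in your set $B$: the inclusion $B\subseteq \big((V\smallsetminus V_{\pr})\times V^\ast\big)\cap N\,\cup\,\big(V\times(V^\ast\smallsetminus V^\ast_{\pr})\big)\cap N$ --- equivalently, that a point $(v,\xi)\in N$ with $v\in V_{\pr}$ and $\xi\in V^\ast_{\pr}$ lies in $N_{\pr}$ --- is asserted but not proved; you explicitly defer it to an unspecified ``generic-position argument.'' Since this is exactly where the content of the proposition sits (the dimension counts themselves are routine), the proof is incomplete as written.

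The missing step is in fact easier than you suggest, and the hypothesis on $\xi$ is not needed: because $(V,G)$ is $2$-principal with FPIG, the principal isotropy group of $V$ is the kernel of $G\to\GL(V)$ by \cite[Corollary 7.7(2)]{GWSlifting}, so after dividing by the kernel one may assume TPIG; then $V_{\pr}$ consists of closed orbits with trivial isotropy, and for $(v,\xi)\in N$ with $v\in V_{\pr}$ the orbit $G(v,\xi)$ is closed (its projection $Gv$ is closed and $G_v$ is finite) with $G_{(v,\xi)}\subseteq G_v$ trivial. Since $N$ has TPIG as a $G$-variety (the Kempf--Ness argument in the proof of Theorem \ref{thrm:Smooth=Princ3Large}), such a point is principal in $N$. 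Granting this, $N\smallsetminus N_{\pr}$ lies entirely over $V\smallsetminus V_{\pr}$, your appeal to $V^\ast$ and Theorem \ref{thrm:KLargeDual} becomes unnecessary, and your two dimension counts (over $V_{(r)}$ with $r\geq 1$, and over $(V_{(0)}\smallsetminus V_{\pr})$ using $3$-principality) finish the argument; this would then be a correct, somewhat more pedestrian variant of the paper's proof. The paper itself avoids the stratum-by-stratum count altogether: it chooses $f_1,f_2,f_3\in\C[V]^G$ vanishing on $V\smallsetminus V_{\pr}$ which together with the $\mu_i$ form a regular sequence (Remark \ref{rem:RegSeqMoment}), so that their zero set $N'$ has codimension $3$ in $N$ and contains $N\smallsetminus N_{\pr}$ by the same key fact about points over $V_{\pr}$ --- the fact your proposal leaves unproved.
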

\begin{proof}
As $V$ is $3$-principal, there are $f_1$, $f_2$, $f_3\in\C[V]^G$ which vanish on $V\smallsetminus V_\pr$ and
form a regular sequence in $\C[V]$. As $V$ is 3-large, the $f_i$ and the coordinates $\mu_1,\ldots,\mu_m$
of $\mu$ with respect to a basis for $\mathfrak{g}$ form a regular sequence in $\C[V\oplus V^\ast]$,
see Remark \ref{rem:RegSeqMoment}. Thus the zero set $N^\prime$ of the $f_i$ has codimension $3$ in $N$.
Let $x=(v,\xi) \in N\smallsetminus N^\prime$. Then one of the $f_i(v)$ is not zero, which implies that
$v\in V_\pr$. Hence $x$ is in $N_\pr$. In other words, $N\smallsetminus N_{\pr}\subset N'$ where
$N'$ has codimension $3$ in $N$. By Corollary \ref{cor:ShellSliceProperties}, it follows that
for any symplectic slice representation $(S,L)$, $N_S^{(L)}$ has codimension
at least $3$ in $N_S$.
\end{proof}

We now follow an argument of  \cite{Starr}.
Choose an
$L$-complement $\widehat{W}$ of $W^L$ so that $W = W^L\oplus \widehat{W}$
and let $\widehat{S} = \widehat{W} \oplus \widehat{W}^\ast$ so that $S = S^L\oplus \widehat{S}$.
Then $N_S = S^L\times N_{\widehat{S}}$ and $N_S\git L = S^L\times N_{\widehat{S}}\git L$.
Let $X$ be the image of $N_{\widehat{S}}\smallsetminus\{0\}$ in
$\mathbb{P}:=\mathbb{P}(\widehat{W}\oplus\widehat{W}^\ast)$ and let $Z$ be the image
of the nonzero points in $N_{\widehat{S}}\smallsetminus (N_{\widehat{S}})_{\pr}$. Note that
$X$ is a complete intersection and that $X\smallsetminus Z$ is smooth.

\begin{lemma}
\label{lem:2LargePi12}
The inclusion $X\smallsetminus Z\to X$ induces an isomorphism $\pi_1(X\smallsetminus Z)\to\pi_1(X)$
and a surjection $\pi_2(X\smallsetminus Z)\to\pi_2(X)$.
\end{lemma}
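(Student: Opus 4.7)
My plan is to derive both conclusions from the long exact sequence of homotopy groups of the pair $(X,X\smallsetminus Z)$, once I show that this pair is $2$-connected, i.e.\ $\pi_i(X,X\smallsetminus Z)=0$ for $i\le 2$. Granted this, the tail
\[
\pi_2(X,X\smallsetminus Z)\to\pi_1(X\smallsetminus Z)\to\pi_1(X)\to\pi_1(X,X\smallsetminus Z)
\]
of the long exact sequence forces $\pi_1(X\smallsetminus Z)\to\pi_1(X)$ to be an isomorphism, and the vanishing of $\pi_2(X,X\smallsetminus Z)$ yields the claimed surjection on $\pi_2$. So the task reduces entirely to establishing $2$-connectedness of the pair.

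Two favorable inputs are in hand. First, $X$ is a projective local complete intersection: by Proposition~\ref{prop:NormalityShell}(\emph{i}) applied to the symplectic slice, $N_{\widehat S}$ is a complete intersection in $\widehat{W}\oplus\widehat{W}^{\ast}$, so its projectivization $X$ is cut out in $\mathbb{P}$ by the corresponding number of homogeneous moment-map equations. Second, $Z$ has complex codimension at least $3$ in $X$: the proposition preceding the lemma gives $\codim_\C(N_S\smallsetminus (N_S)_{\pr},N_S)\ge 3$, this transfers to $N_{\widehat S}$ via the product decomposition $N_S\simeq S^L\times N_{\widehat S}$, and projectivization preserves codimension.

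To combine these inputs I would invoke a Lefschetz-type theorem for singular complete intersections of the following form: for a projective local complete intersection $X$ and a closed subvariety $Z\subset X$ of complex codimension at least $c$, the pair $(X,X\smallsetminus Z)$ is $(2c-1)$-connected. Such a statement is supplied by the homotopy Lefschetz theorem of Goresky--MacPherson from stratified Morse theory, via the fact that a local complete intersection has rectified homotopy depth equal to its real dimension, together with Hamm's theorem on the high connectivity of the Milnor fibre of a complete intersection singularity. In our setting, $c=3$ gives $5$-connectedness of the pair, vastly more than the $2$-connectedness we need.

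The principal obstacle is that $X$ is singular precisely along $Z$, so neither naive transversality nor the classical smooth Lefschetz theorem applies: one cannot simply push a $1$- or $2$-cycle off $Z$ by a generic perturbation inside a singular target. The stratified approach circumvents this by reducing the global connectivity of $(X,X\smallsetminus Z)$ to the connectivity of the local normal data along each stratum of $Z$, which is itself controlled by the complete-intersection structure at that stratum. Once this local input is secured, the global statement is formal through the handle/cell decomposition of stratified Morse theory.
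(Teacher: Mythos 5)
Your reduction of the lemma to $2$-connectedness of the pair $(X,X\smallsetminus Z)$ is sound, and the two inputs you isolate ($X$ a projective complete intersection, $\codim_\C(Z,X)\geq 3$) are the right ones; but the Lefschetz-type statement you invoke is false as stated, and the error occurs exactly where the singularities of $X$ matter. The bound ``$(X,X\smallsetminus Z)$ is $(2c-1)$-connected when $\codim_\C Z\geq c$'' is the general-position bound for \emph{smooth} $X$; it fails for a singular local complete intersection. Since the relative homotopy of $(X,X\smallsetminus Z)$ depends only on a neighborhood of $Z$, it suffices to look at the $3$-fold node: for $X=\{x_1^2+x_2^2+x_3^2+x_4^2=0\}$ with $Z=\{0\}$ (so $c=3$), the link is $S^2\times S^3$ and the cone is contractible, whence $\pi_3(X,X\smallsetminus Z)\cong\pi_2(S^2\times S^3)\cong\Z\neq 0$: the pair is $2$-connected but not $5$-connected. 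The source of the slip is the phrase ``rectified homotopy depth equal to its real dimension'': the Hamm--L\^e theorem gives $\mathrm{rhd}(X)=\dim_\C X$ for a local complete intersection, and the connectivity statement this yields is that $(X,X\smallsetminus Z)$ is $(c-1)$-connected when $\codim_\C(Z,X)\geq c$. Fortunately $(c-1)$-connectedness with $c=3$ is precisely the $2$-connectedness your long exact sequence needs, so your argument can be repaired by citing the correct bound (Hamm--L\^e, or the rectified-homotopical-depth results in Goresky--MacPherson) --- but then the codimension-$3$ estimate coming from $3$-largeness is used in full, with no room to spare, rather than giving ``vastly more'' than needed; this is presumably why the lemma asserts only an isomorphism on $\pi_1$ and a surjection on $\pi_2$.

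For comparison, the paper's proof avoids homotopical depth altogether: since $\codim_\C(Z,X)\geq 3$, a generic linear subspace $H\subset\mathbb{P}$ of codimension $\dim_\C X-2$ misses $Z$, and \cite[Theorem 1.2]{GM} (with $\hat n=2$), applied to $X$ and to the smooth quasi-projective variety $X\smallsetminus Z$, shows that $\pi_i(H\cap X)\to\pi_i(X)$ and $\pi_i(H\cap X)=\pi_i(H\cap(X\smallsetminus Z))\to\pi_i(X\smallsetminus Z)$ are isomorphisms for $i=1$ and surjections for $i=2$; comparing the two maps gives the lemma. Both routes are Goresky--MacPherson-type arguments --- yours through the local homotopy depth of complete intersection singularities, the paper's through generic plane sections --- and with the corrected connectivity bound your version is a legitimate alternative.
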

\begin{proof}
Choose a generic linear section $H$ of $\mathbb{P}$  which has codimension $\dim_{\C} X-2$
and does not intersect $Z$. Then \cite[Theorem 1.2]{GM} (with $\hat{n} = 2$) implies that
the maps $\pi_i(H\cap X)\to\pi_i(X)$ and
\[
    \pi_i(H\cap (X\smallsetminus Z)) = \pi_i(H\cap X)\to \pi_i(X\smallsetminus Z).
\]
are isomorphisms for $i=1$ and surjections for $i=2$.
\end{proof}

\begin{theorem}
\label{thrm:Smooth=Princ3Large}
Suppose $(V,G)$ is $3$-large. Then for any symplectic slice representation $(W\oplus W^\ast,L)$
such that $L$ is not principal, $N_{\widehat{S}}\git L$ is not smooth. Hence, $(N\git G)_{\pr} = (N\git G)_{\sm}$.
\end{theorem}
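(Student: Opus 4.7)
The plan is to proceed by contradiction. Assume that for some symplectic slice $(W\oplus W^\ast, L)$ with $L$ non-principal, the quotient $N_{\widehat{S}}\git L$ is smooth. Because $\widehat{W}$ is chosen so that $\widehat{W}^L = 0$, the graded invariant ring $\C[N_{\widehat{S}}]^L$ has only constants in degree zero and positive-degree generators, and smoothness at the irrelevant ideal forces it to be a polynomial algebra $\C[t_1,\ldots,t_{2d}]$. Hence $N_{\widehat{S}}\git L \cong \C^{2d}$, and the associated projective quotient $Y := \operatorname{Proj}\bigl(\C[N_{\widehat{S}}]^L\bigr)$ is the smooth and simply connected projective space $\mathbb{P}^{2d-1}$ with $\pi_2 = \Z$.

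Next I would combine the Lefschetz hyperplane theorem with Lemma \ref{lem:2LargePi12} to study the homotopy of $X$ and $X\setminus Z$. The variety $X$ is a complete intersection in $\mathbb{P}(\widehat{W}\oplus\widehat{W}^\ast)$; after verifying that $\dim_\C X \geq 3$ under $3$-largeness (and separately handling the case of finite non-principal $L$, which is ruled out by Proposition \ref{prop:NormalityShell}({\it iv}) and ({\it v}) together with the $3$-principal hypothesis, since points with finite isotropy lie in $N_{\sm}$), Lefschetz for complete intersections yields $\pi_1(X) = 0$ and $\pi_2(X) = \Z$, generated by the hyperplane class. Lemma \ref{lem:2LargePi12}, whose hypothesis that $Z$ has codimension at least $3$ in $X$ is supplied by the preceding proposition, then transfers these to $\pi_1(X\setminus Z) = 0$ and a surjection $\pi_2(X\setminus Z)\twoheadrightarrow \Z$.

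The contradiction is to be extracted from the $L$-invariant surjective categorical quotient morphism $q\co X \to Y$. By Corollary \ref{cor:ShellSliceProperties} and the preceding proposition, the slice $(\widehat{W}, L)$ inherits enough of the $3$-large structure so that on a dense open $U\subset X\setminus Z$ the map $q|_U\co U \to q(U)\subset Y$ is a geometric quotient with finite stabilizers, hence a topological fibration with fiber $L/F$ for finite $F$. The long exact homotopy sequence of this fibration, combined with the known topology of the complex reductive $L$ (via its maximal compact subgroup), with $Y = \mathbb{P}^{2d-1}$ being simply connected, and with the surjection $\pi_2(X\setminus Z)\twoheadrightarrow \Z$ already obtained, combine to force numerical constraints incompatible with $L$ being non-principal of positive dimension. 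The second sentence of the theorem is then immediate from the opening observation of Section \ref{subsec:Smooth=Principal}: any point of $(N\git G)_{\sm}$ whose closed orbit has non-principal isotropy would yield a smooth $N_{\widehat{S}}\git L$, contradicting what we have just established.

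The hardest step is turning the informal $\pi_2$-comparison into a genuine contradiction. The morphism $q$ is not a topological fibration on all of $X\setminus Z$, so one must control the codimension of $X \setminus (Z\cup U)$ in $X$ to push the long exact sequence all the way through, and the possibly nontrivial $\pi_0(L)$ and $\pi_1(L)$ must be tracked carefully in the resulting sequence. The codimension estimates supplied by the preceding proposition and the full force of $3$-largeness are exactly what make this control possible, and they are the only inputs specific to $3$-largeness used in the argument.
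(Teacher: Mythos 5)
Your setup matches the paper's up to a point: argue by contradiction, use that a smooth cone quotient is $\C^{2d}$, and use the Lefschetz-type results (via \cite[Corollary 9.7]{FL} and Lemma \ref{lem:2LargePi12}) to compute $\pi_1$ and $\pi_2$ of $X$ and compare with $X\smallsetminus Z$. But there is a genuine gap at the step where you claim the long exact homotopy sequence "forces numerical constraints incompatible with $L$ being non-principal of positive dimension." It does not, and cannot: the entire homotopy-theoretic package only yields that $\pi_1(L)$ is finite, i.e.\ that $L$ is semisimple. A semisimple $L$ (say $L=\SL_2(\C)$, with $\pi_1(L)=0$ and $\pi_2(L)=0$) is perfectly consistent with every homotopy group you have computed, so no contradiction can be extracted from the topology alone. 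The paper's contradiction comes from a second, invariant-theoretic ingredient that is entirely absent from your plan: once $L$ is known to be semisimple, normality of $N$ gives via Corollary \ref{cor:ShellSliceProperties} that $(W,L)$ is $1$-modular, the Luna--Vust criterion \cite[Corollaire 1]{LunaVust} then gives stability, hence $(W,L)$ is $1$-large, and \cite[Lemma 2.3]{HerbigSchwarzSeaton} says that the symplectic quotient of a $1$-large module is never smooth --- that last cited lemma is the actual source of the contradiction. You even flag the missing step yourself ("the hardest step is turning the informal $\pi_2$-comparison into a genuine contradiction"), which is exactly where a different kind of argument must enter.

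Two further points. First, your fibration is set up over a dense open subset of $\operatorname{Proj}(\C[N_{\widehat{S}}]^L)$, which you incorrectly identify with $\mathbb{P}^{2d-1}$ (the generators of the invariant ring need not have equal degrees, so this Proj is in general a weighted projective space); the paper avoids both the Proj and your codimension-control worries by first reducing to the case where $G$ acts with TPIG (using \cite[Corollary 7.7(2)]{GWSlifting} and the Kempf--Ness set) and then to a \emph{subprincipal} slice, so that the non-principal locus of $N_{\widehat{S}}$ is exactly the null cone and one gets an honest fiber bundle $L\to (N_{\widehat{S}})_\pr\to\C^{2d}\smallsetminus\{0\}$ over the whole punctured quotient, with fiber $L$ itself rather than $L/F$. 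Second, finite non-principal $L$ is not simply "ruled out" by Proposition \ref{prop:NormalityShell}({\it iv}), ({\it v}); after the TPIG and subprincipal reductions it is handled by the same final mechanism (smoothness of $(\widehat{W}\oplus\widehat{W}^\ast)/L$ would force $L$ to act trivially by Chevalley--Shephard--Todd, as in Proposition \ref{prop:SM=PRConnected}), not by the topology. So the proposal, as written, follows the paper's first half but lacks the decisive second half.
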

\begin{proof}
Note that if $(V,G)$ is $2$-principal, then the principal isotropy group is the kernel
of the homomorphism $G \to \GL(V)$ by \cite[Corollary 7.7(2)]{GWSlifting}. Hence,
replacing $G$ with its image in $\GL(V)$, we may assume $V$ has TPIG.
As the real shell $M \subset N$ is the Kempf-Ness set and hence intersects each closed orbit in $V$
\cite[Corollary 4.7]{GWSkempfNess}, it follows that $N$ contains closed orbits with trivial isotropy and hence
has TPIG as a $G$-variety.

If $N_{\widehat{S}}\git L$ is smooth, then the same holds for any of the corresponding symplectic slice representations.
Thus it is enough to consider the case that $N_{\widehat{S}}\git L$ is smooth for $(W\oplus W^\ast,L)$
\emph{subprincipal}, i.e., the only proper symplectic slice representations in $W\oplus W^\ast$ are principal.
Since $N_{\widehat{S}}\git L$ is smooth and a cone, it is isomorphic to affine space of dimension $2d$ for some $d\geq 1$.
The non-principal orbits in $N_{\widehat{S}}$ are just the null cone $\NN$ so that the image of the principal orbits in
$N_{\widehat{S}}\git L\simeq \C^{2d}$ is just the complement of the origin.
Therefore, $\pi_i((N_{\widehat{S}}\git L)_{\pr}) = \pi_i(\C^{2d}\smallsetminus\{0\}) = 0$ for $i=1,2$.
Since the action of $L$ on $N_{\widehat{S}}$
has TPIG, we have a fiber bundle $L\to (N_{\widehat{S}})_\pr\to\C^{2d}\smallsetminus \{0\}$. Taking the exact
homotopy sequence, we obtain
\[
 \underbrace{\pi_2(L)}_{=0} \to \pi_2((N_{\widehat{S}})_\pr) \to \underbrace{\pi_2(\C^{2d}\smallsetminus\{0\})}_{=0}
    \to \pi_1(L) \to \pi_1((N_{\widehat{S}})_\pr) \to \underbrace{\pi_1(\C^{2d}\smallsetminus\{0\})}_{=0}.
\]
Thus $\pi_1(L)\simeq \pi_1((N_{\widehat{S}})_\pr)$ and $\pi_2((N_{\widehat{S}})_\pr)=0$.

Recall that $\mathbb{P}=\mathbb{P}^{2\dim_{\C} \widehat{W}-1}$. By \cite[Corollary 9.7]{FL},
the relative homotopy groups $\pi_i(\mathbb{P},X)$ vanish  for
$i\leq 2\dim_{\C} X - \dim_{\C} \mathbb{P} + 1 = 2d$, as $\dim_{\C} N_{\widehat{S}} = 2\dim_{\C} \widehat{W} - \dim_{\C} L$.
It follows that $\pi_2(X)=\Z$ and $\pi_1(X)=0$. We have a diagram of fibrations
\[
\begin{CD}
\C^\times@>>>(N_{\widehat{S}})_\pr@>>>X\smallsetminus Z \\
@VVV @VVV @VVV \\
\C^\times @>>>N_{\widehat{S}}\smallsetminus\{0\}@>>>X.
\end{CD}
\]
A diagram chase and Lemma \ref{lem:2LargePi12} show that
$\pi_1((N_{\widehat{S}})_\pr)\simeq\pi_1(N_{\widehat{S}}\smallsetminus\{0\})$
and that $0=\pi_2((N_{\widehat{S}})_\pr)$ maps onto $\pi_2(N_{\widehat{S}}\smallsetminus\{0\})$, which then must vanish.
From the fibration $\C^\times\to N_{\widehat{S}}\smallsetminus\{0\}\to X$, we then have an exact sequence
\[
    0\to \underbrace{\pi_2(X)}_{=\Z}\to\underbrace{\pi_1(\C^\times)}_{=\Z}
    \to\pi_1(N_{\widehat{S}}\smallsetminus\{0\})\to \underbrace{\pi_1(X)}_{=0}.
\]
Hence $\pi_1(N_{\widehat{S}}\smallsetminus\{0\})$ is finite. Thus $\pi_1((N_{\widehat{S}})_\pr)$
and $\pi_1(L)$ are finite. This implies that $L$ is semisimple.

Now, as $N$ is normal, $(W, L)$ is $1$-modular by Corollary \ref{cor:ShellSliceProperties}.
Then by \cite[Corollaire 1]{LunaVust}, as $W_{(0)}$ is open and consists of points with finite,
hence reductive, isotropy, $(W,L)$ must be stable.
This implies that $(W,L)$ is $1$-large.
By \cite[Lemma 2.3]{HerbigSchwarzSeaton}, it follows that $N_{\widehat{S}}\git L$ is not smooth, a contradiction.
\end{proof}

% xxxxxxxxxxxxxxxxxxxxxxxxxxxxxxxxxxxxxxxxxxxxxxxxxxxxxxxxxxxxxxxxxxxxxxxxx
% xxxxxxxxxxxxxxxxxxxxxxxxxxxxxxxxxxxxxxxxxxxxxxxxxxxxxxxxxxxxxxxxxxxxxxxxx
% xxxxxxxxxxxxxxxxxxxxxxxxxxxxxxxxxxxxxxxxxxxxxxxxxxxxxxxxxxxxxxxxxxxxxxxxx

\section{Proof of Theorem \ref{thrm:GeneralMain}}
\label{sec:GeneralMain}

We are now ready to prove the first of our main results.
We divide the proof of Theorem \ref{thrm:GeneralMain} into two pieces. In Section \ref{subsec:Codim},
we use the results above to establish that $N\git G$ has symplectic
singularities if $(V,G)$ is
3-large, or more generally is $2$-large and
satisfies $(N\git G)_{\pr} = (N\git G)_{\sm}$. In Section \ref{subsec:GradGoren},
assuming that  $(V,G)$ is 1-large, $N$ is normal, and $(N\git G)_{\pr} = (N\git G)_{\sm}$, we show that
$\C[N]^G$ is graded Gorenstein if it is Gorenstein.

% xxxxxxxxxxxxxxxxxxxxxxxxxxxxxxxxxxxxxxxxxxxxxxxxxxxxxxxxxxxxxxxxxxxxxxxxx

\subsection{Symplectic singularities of $N\git G$}
\label{subsec:Codim}

To establish that symplectic quotients that satisfy the hypotheses of Theorem
\ref{thrm:GeneralMain} have symplectic singularities, we first demonstrate the following.

\begin{lemma}\label{lem:FirstCodim}
Let $L$ be a reductive subgroup of $G$ and $Gv$ an orbit in $V$ where $v\in V^L$. Then
the
complex codimension of $(Gv)^L$ in $Gv$ is at least
\[
\dim_\C G-\dim_\C N_G(L)-\dim_\C G_v+\dim_\C L.
\]
\end{lemma}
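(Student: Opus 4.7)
The plan is to establish the equivalent bound $\dim_\C(Gv)^L \leq \dim_\C N_G(L) - \dim_\C L$, from which the stated codimension estimate follows immediately since $\dim_\C Gv = \dim_\C G - \dim_\C G_v$.

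Because $v\in V^L$, we have $L\subset G_v$, so the reductive group $L$ acts on the smooth variety $Gv\simeq G/G_v$ with $v$ as a fixed point, and by a classical result on reductive group actions in characteristic zero, the fixed locus $(Gv)^L$ is smooth. Hence its dimension at any point equals that of the tangent space there. I would take any point $x\in(Gv)^L$ and use that $L$ fixes $x$ to observe that the $L$-module surjection $\mathfrak{g}\to T_xGv$ given by $A\mapsto Ax$ (adjoint action on $\mathfrak{g}$, given action on $V$) has $L$-stable kernel $\mathfrak{g}_x$. Reductivity of $L$ then yields the identification
\[
    T_x(Gv)^L = (T_xGv)^L \cong (\mathfrak{g}/\mathfrak{g}_x)^L \cong \mathfrak{g}^L/\mathfrak{g}_x^L,
\]
whose complex dimension is $\dim_\C Z_G(L) - \dim_\C Z_{G_x}(L)$.

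The heart of the argument is then the Lie-theoretic inequality
\[
    \dim_\C Z_G(L) - \dim_\C Z_{G_x}(L) \leq \dim_\C N_G(L) - \dim_\C L,
\]
which I would derive from two observations. First, $L\cdot Z_G(L)$ is a subgroup of $N_G(L)$ with $L\cap Z_G(L) = Z(L)$, giving
\[
    \dim_\C L + \dim_\C Z_G(L) - \dim_\C Z(L) = \dim_\C(L\cdot Z_G(L)) \leq \dim_\C N_G(L).
\]
Second, since $Z(L)\subset L\subset G_x$ and $Z(L)$ centralizes $L$, we have $Z(L)\subset Z_{G_x}(L)$, whence $\dim_\C Z(L)\leq \dim_\C Z_{G_x}(L)$. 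Combining these yields the key inequality; running the tangent-space computation at a base point of each irreducible component of $(Gv)^L$ then gives $\dim_\C(Gv)^L \leq \dim_\C N_G(L) - \dim_\C L$ and the desired codimension bound.

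The main subtle point is the smoothness of $(Gv)^L$, which is standard for fixed loci of reductive groups acting on smooth varieties in characteristic zero; it allows us to conclude the dimension bound from a pointwise tangent-space computation. Once that is in place, the entire argument reduces to the linear algebra of centralizers and normalizers of reductive subgroups, and one avoids the more delicate enumeration of $N_G(L)$-orbits on $(Gv)^L$ or of $G_x$-conjugacy classes among $G$-conjugates of $L$ sitting inside $G_x$.
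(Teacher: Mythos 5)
Your argument is correct and follows essentially the same route as the paper: both reduce to the identification $T_x(Gv)^L\simeq\mathfrak{g}^L/\mathfrak{g}_x^L$ via reductivity of $L$ and then conclude with the same centralizer--normalizer bookkeeping ($\dim_\C C_{G_x}(L)\geq\dim_\C C(L)$ together with $\dim_\C N_G(L)\geq\dim_\C L+\dim_\C C_G(L)-\dim_\C C(L)$, which the paper uses as an equality via $N_G(L)/L=(G/L)^L$). The only real difference is in justification: where you invoke the general smoothness of fixed loci of reductive groups acting on smooth varieties, the paper obtains the same conclusion directly by observing that $\mathfrak{g}^L=\lie(C_G(L))$ makes the $C_G(L)^0$-orbit through any point of $(Gv)^L$ open in it, so that $C_G(L)^0$ acts transitively on each connected component and the dimension is an orbit dimension.
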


\begin{proof}
Identify
$Gv$ with $G/G_v$. Then
the tangent space $T_{eG_v} (G/G_v)$ at the coset $eG_v$ of the identity is given by
$\mathfrak{g}/\mathfrak{g}_v$, and because $L$ is reductive,
$T_{eG_v} (G/G_v)^L = \mathfrak{g}^L/(\mathfrak{g}_v)^L$. As $\mathfrak{g}^L = \lie(C_G(L))$,
the $C_G(L)^0$-orbit through $eG_v$ is open in
$(G/G_v)^L$. Equivalently, the orbit of $C_G(L)^0$ in $(Gv)^L$ is open at $v$, hence open
at any point in $(Gv)^L$. It follows that $C_G(L)^0$ acts transitively on each connected
component of $(Gv)^L$. Hence the complex codimension of $(Gv)^L$ in $Gv$ is
\begin{align*}
    \dim_\C G - &\dim_\C G_v - \big(\dim_\C C_G(L) - \dim_\C C_{G_v}(L)\big)
    \\&\geq     \dim_\C G - \dim_\C G_v-\dim_\C C_G(L)   + \dim_\C C(L)  -\dim_\C L+\dim_\C L
    \\&= \dim_\C G-\dim_\C G_v-\dim_\C N_G(L)+\dim_\C L
\end{align*}
where we have used the fact that   $N_G(L) /L= (G/L)^L$ has complex dimension $\dim_\C C_G(L)  - \dim_\C C(L).$
\end{proof}

\begin{lemma}
\label{lem:Codim}
Suppose that $k \geq 1$ and that $(V,G)$ is $k$-modular, and let $L$ be a reductive subgroup of $G$.
Then for each $p > 0$,
\[
    \codim_{\C} (V^L\cap V_{(p)}) \geq \dim_\C G - \dim_\C N_G(L) + \dim_\C L + k.
\]
\end{lemma}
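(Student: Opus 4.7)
The plan is to combine Lemma \ref{lem:FirstCodim} with the dimension bound on $V_{(p)}$ coming from $k$-modularity, using a fibration argument over the orbit space.

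First, I would pick a point $v \in V^L \cap V_{(p)}$, so that $L$ is a reductive subgroup of the isotropy $G_v$, and $\dim_\C G_v = p$. Applying Lemma \ref{lem:FirstCodim}, the complex codimension of $(Gv)^L$ in $Gv$ is at least $\dim_\C G - \dim_\C N_G(L) - p + \dim_\C L$. Since $\dim_\C Gv = \dim_\C G - p$, this gives
\[
\dim_\C (Gv)^L \leq \dim_\C N_G(L) - \dim_\C L.
\]
This is the uniform bound on the ``vertical'' direction that will feed the saturation argument.

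Next, I would work with the $G$-saturation $Y := G\cdot(V^L\cap V_{(p)}) \subseteq V_{(p)}$. Every $G$-orbit in $Y$ lies in $V_{(p)}$ so has dimension $\dim_\C G - p$, and hence the quotient $Y \to Y/G$ (dimension understood as the transcendence degree of the invariant rational functions, or equivalently via Rosenlicht, on a suitable open subset) satisfies $\dim_\C Y/G = \dim_\C Y - (\dim_\C G - p)$. The map $V^L\cap V_{(p)} \to Y/G$ obtained by restriction is surjective, and its fiber over the class of an orbit $Gv$ is $Gv \cap V^L \cap V_{(p)} = (Gv)^L$, which by the previous step has dimension at most $\dim_\C N_G(L) - \dim_\C L$. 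Adding base and generic fiber dimensions,
\[
\dim_\C(V^L\cap V_{(p)}) \;\leq\; \dim_\C Y/G + \dim_\C N_G(L) - \dim_\C L.
\]

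Finally, I would invoke $k$-modularity: since $p \geq 1$, $\codim_\C V_{(p)} \geq p + k$, so $\dim_\C V_{(p)} \leq \dim_\C V - p - k$. Because $Y \subseteq V_{(p)}$,
\[
\dim_\C Y/G \;\leq\; \dim_\C V_{(p)} - (\dim_\C G - p) \;\leq\; \dim_\C V - k - \dim_\C G.
\]
Substituting into the previous inequality yields
\[
\dim_\C(V^L\cap V_{(p)}) \;\leq\; \dim_\C V - k - \dim_\C G + \dim_\C N_G(L) - \dim_\C L,
\]
which rearranges to the desired codimension bound.

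The main obstacle is the step that treats the orbit space $V_{(p)}/G$ rigorously: in general $V_{(p)}$ is only locally closed and the quotient is not a variety, so one must justify the additive ``$\dim = \dim\text{base} + \dim\text{fiber}$'' estimate. The cleanest way is to pass to a $G$-invariant open subset of $Y$ where Rosenlicht's theorem (or Luna's slice theorem on the single isotropy stratum) gives a geometric quotient, apply the dimension formula there, and observe that neither bound changes when one restricts to an open dense subset.
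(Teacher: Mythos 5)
Your proposal is correct and follows essentially the same route as the paper: combine the bound of Lemma \ref{lem:FirstCodim} on the codimension of $(Gv)^L$ in $Gv$ with the $k$-modularity estimate $\codim_\C V_{(p)} \geq p+k$, so that the two codimensions add. The paper's proof simply adds these codimensions without comment, leaving implicit the orbit-space/fibration dimension count that you spell out (and correctly flag as needing Rosenlicht or a slice argument on the stratum), so your write-up is a more detailed version of the same argument rather than a different one.
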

\begin{proof}
Let $v\in V^L$  and let $p = \dim_\C G_v > 0$. Then as $(V, G)$ is $k$-modular,
$\codim_\C V_{(p)} \geq p + k$. Hence by Lemma \ref{lem:FirstCodim} we see that
the complex codimension of $V^L\cap V_{(p)}$ in $V$ is at least
\[
    p + k+ \dim_\C G - p-\dim_\C N_G(L) + \dim_\C L
    =
    \dim_\C G - \dim_\C N_G(L) + \dim_\C L + k.
    \qedhere
\]
\end{proof}

\begin{lemma}\label{lem:CodimFinite}
Suppose that $k \geq 1$,   $(V,G)$ is   $k$-large  with TPIG, and $L$ is  a nontrivial finite subgroup of $G$. Then
\[
    \codim_{\C} V^L \geq\dim_\C G-\dim_\C N_G(L)+k.
\]
\end{lemma}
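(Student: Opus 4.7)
The plan is to split into two cases depending on the dimension of the generic isotropy group on $V^L$, handling one case by Lemma~\ref{lem:Codim} and the other by a direct dimension argument via the action morphism. Since $V^L$ is a linear subspace, hence irreducible, the function $v\mapsto \dim_\C G_v$ is constant on an open dense subset of $V^L$, with some value $p\geq 0$.

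The key consequence of TPIG I would record at the outset is that since $L\neq\{e\}$, every $v\in V^L$ has $G_v\supseteq L\neq\{e\}$, so $v\notin V_{\pr}$ (as $V_{\pr}$ has trivial isotropy). Hence $V^L\subseteq V\setminus V_{\pr}$, and moreover $G\cdot V^L\subseteq V\setminus V_{\pr}$, so by $k$-principal,
\[
    \dim_\C(G\cdot V^L)\;\leq\; \dim_\C V - k.
\]

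\emph{Case A: $p\geq 1$.} Then $V^L\cap V_{(p)}$ is open and dense in $V^L$, so by Lemma~\ref{lem:Codim} applied to the reductive subgroup $L$,
\[
    \codim_\C V^L \;=\; \codim_\C(V^L\cap V_{(p)}) \;\geq\; \dim_\C G-\dim_\C N_G(L)+\dim_\C L+k.
\]
Since $L$ is finite, $\dim_\C L=0$, yielding the stated bound.

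\emph{Case B: $p=0$.} I would use the action morphism $\phi\co G\times V^L\to V$, $(g,v)\mapsto gv$, whose image is $G\cdot V^L$. A direct calculation identifies the fiber of $\phi$ over $v_0\in V^L$ with
\[
    \{g\in G : gLg^{-1}\subseteq G_{v_0}\}.
\]
For $v_0$ in an open dense subset of $V^L$, the isotropy $H:=G_{v_0}$ is a fixed finite subgroup of $G$ containing $L$. This set then decomposes as a disjoint union of left cosets of $N_G(L)$, one for each $G$-conjugate $L'=gLg^{-1}$ of $L$ with $L'\subseteq H$; the index set is finite because $H$ is a finite group and thus has only finitely many subgroups. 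Consequently the generic fiber of $\phi$ has dimension exactly $\dim_\C N_G(L)$, and the fiber dimension formula gives
\[
    \dim_\C(G\cdot V^L) \;=\; \dim_\C G+\dim_\C V^L-\dim_\C N_G(L).
\]
Combining with $\dim_\C(G\cdot V^L)\leq \dim_\C V-k$ yields
\[
    \dim_\C V^L \;\leq\; \dim_\C V -\dim_\C G+\dim_\C N_G(L)-k,
\]
so $\codim_\C V^L\geq \dim_\C G-\dim_\C N_G(L)+k$.

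The main technical point is the generic fiber computation in Case B: that the fiber has dimension exactly $\dim_\C N_G(L)$, not merely at least this much. This reduces to the elementary observation that a finite group has only finitely many subgroups, so no extra dimension is accumulated from conjugates of $L$ inside the isotropy. The case split itself is forced because $k$-modular gives no information inside $V_{(0)}$, so Lemma~\ref{lem:Codim} cannot be invoked directly when $p=0$, and it is precisely in Case B that the TPIG and $k$-principal hypotheses come into play.
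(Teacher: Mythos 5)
Your proof is correct, and its skeleton matches the paper's: you split according to the dimension of the stabilizer on $V^L$, dispose of the positive-dimensional case with Lemma \ref{lem:Codim} (with $\dim_\C L = 0$), and in the finite-stabilizer case combine the $k$-principal bound on $V\smallsetminus V_{\pr}$ with an estimate of size $\dim_\C G - \dim_\C N_G(L)$. The one genuine divergence is how you obtain that last estimate: the paper invokes Lemma \ref{lem:FirstCodim} (the codimension of $(Gv)^L$ in $Gv$, proved via the open $C_G(L)^0$-orbits in $(Gv)^L$) and leaves the passage from a per-orbit bound to a bound on $\dim_\C(V^L\cap V_{(0)})$ implicit, whereas you make that count explicit through the action morphism $G\times V^L\to V$, identifying the fiber over $v_0$ with $\{g \in G: gLg^{-1}\subseteq G_{v_0}\}$ and observing that it is a finite union of left $N_G(L)$-cosets when $G_{v_0}$ is finite. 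This buys a self-contained treatment of the finite-isotropy case (no appeal to Lemma \ref{lem:FirstCodim}) and records transparently where TPIG and $k$-principality enter; the paper's route is shorter because Lemma \ref{lem:FirstCodim} handles all stabilizer dimensions uniformly. Two cosmetic points: you neither need nor have justified that the stabilizer is a \emph{fixed} finite group $H$ on a dense open subset of $V^L$ (note $V^L$ is not $G$-stable, so generic-stabilizer results do not apply directly), but your count only uses finiteness of $G_{v_0}$, which is exactly the Case B hypothesis; and only the inequality $\dim_\C \overline{G\cdot V^L}\geq \dim_\C G+\dim_\C V^L-\dim_\C N_G(L)$ is needed, which already follows from the upper bound on a single fiber, so the possible reducibility of $G\times V^L$ for disconnected $G$ is harmless.
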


\begin{proof}
For each $p > 0$, $\codim_{\C} (V^L\cap V_{(p)}) \geq \dim_\C G - \dim_\C N_G(L) + k$ by Lemma \ref{lem:Codim}.
If $v \in V^L\cap V_{(0)}$, then $Gv$ lies in $V\smallsetminus V_{\pr}$ which has complex codimension at least $k$ in $V$.
Applying Lemma \ref{lem:FirstCodim} completes the proof.
\end{proof}
We now can prove the following.

\begin{theorem}
\label{thrm:Codim}
Let $k \geq 1$ and let $(V,K)$ be unitary where $(V,G)$ is $k$-large,
the corresponding shell $N$ is normal, and $(N\git G)_{\pr} = (N\git G)_{\sm}$.
Then $N\git G\smallsetminus (N\git G)_{\sm}$ has complex codimension at least $2k$ in $N\git G$.
\end{theorem}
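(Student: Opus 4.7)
The plan is to use the hypothesis $(N\git G)_{\sm}=(N\git G)_{\pr}$ to rewrite the complement $N\git G\smallsetminus(N\git G)_{\sm}$ as the union of non-principal strata $(N\git G)_{(L)}$ and to bound each such stratum's codimension. When $k=1$, the conclusion is already immediate: since $N$ is normal and $G$ is reductive, $N\git G=\Spec\C[N]^G$ is normal, so Serre's criterion forces its singular locus---equal by hypothesis to the complement of the principal stratum---to have codimension at least $2=2k$. I therefore concentrate on $k\ge 2$, where $(V,G)$ is $2$-principal, so by \cite[Corollary 7.7(2)]{GWSlifting} the principal isotropy coincides with $\ker(G\to\GL(V))$. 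Replacing $G$ by its image in $\GL(V)$ I may assume $(V,G)$ has TPIG, and this quotient preserves $k$-largeness as well as the quantities $\dim_\C G$, $\dim_\C N_G(L)$, and $\codim_\C V^L$ that appear below.

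Fix a closed orbit $Gx\subset N$ with non-principal isotropy $L$. The symplectic slice theorem (Corollary \ref{cor:ShellSliceProperties}) identifies $\codim_\C(N\git G)_{(L)}$ with $\codim_\C(N_S\git L)_{(L)}$ in $N_S\git L$, where $(S,L)=(W\oplus W^*,L)$ is the symplectic slice representation of Proposition \ref{prop:SympSlices}. Normality of $N$ transfers to $N_S$ (Corollary \ref{cor:ShellSliceProperties}); since $N_S$ is a complete-intersection cone by Proposition \ref{prop:NormalityShell}(i), normality at the origin forces irreducibility, Proposition \ref{prop:NormalityShell}(ii) then yields that $(W,L)$ is $1$-modular, and the Luna--Vust argument used in the proof of Theorem \ref{thrm:Smooth=Princ3Large} gives that $(W,L)$ is also stable, so $\dim_\C N_S\git L=2\dim_\C W-2\dim_\C L$. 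Because $L$ fixes $S^L=W^L\oplus(W^L)^*\subset N_S$ pointwise, $(N_S\git L)_{(L)}$ has complex dimension $2\dim_\C W^L$ at the image of $x$. Using the $L$-module decomposition $V\oplus V^*\simeq S\oplus(\mathfrak g/\mathfrak l)\oplus(\mathfrak g/\mathfrak l)^*$ together with $\dim_\C(\mathfrak g/\mathfrak l)^L=\dim_\C N_G(L)-\dim_\C L$, straightforward arithmetic yields
$$\codim_\C(N\git G)_{(L)}=2\bigl(\codim_\C V^L-\dim_\C G+\dim_\C N_G(L)-\dim_\C L\bigr),$$
reducing the theorem to the inequality
$$\codim_\C V^L\ge\dim_\C G-\dim_\C N_G(L)+\dim_\C L+k. \qquad (\ast)$$

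I dispatch $(\ast)$ by splitting on $\dim_\C L$. When $\dim_\C L>0$, the generic isotropy $d=\min_{v\in V^L}\dim_\C G_v$ satisfies $d\ge\dim_\C L>0$, and by upper semicontinuity of isotropy dimension $V^L\cap V_{(d)}$ is open and dense in $V^L$, so $\codim_\C V^L=\codim_\C(V^L\cap V_{(d)})$; Lemma \ref{lem:Codim} with $p=d$ delivers $(\ast)$. When $L$ is finite and nontrivial, the TPIG reduction already arranged makes Lemma \ref{lem:CodimFinite} directly applicable, yielding $(\ast)$. The principal technical obstacle is the dimensional bookkeeping that transfers $\codim_\C(N\git G)_{(L)}$ to $\codim_\C V^L$ through the symplectic slice; the only subtle framework point is that for finite $L$ the bound $(\ast)$ genuinely requires TPIG (otherwise $V^L$ can contain principal-stratum points coming from a strictly larger finite isotropy), which is precisely why the $k=1$ case, in which TPIG is not yet available, has to be handled separately via the Serre-normality observation.
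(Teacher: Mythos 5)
Your argument is correct and follows essentially the same route as the paper's proof: for $k\ge 2$ reduce to TPIG, transfer the codimension of a non-principal stratum to the symplectic slice via Corollary \ref{cor:ShellSliceProperties}, perform the same dimension bookkeeping using $V\oplus V^\ast\simeq S\oplus\mathfrak g/\mathfrak l\oplus(\mathfrak g/\mathfrak l)^\ast$ and $\dim_\C N_G(L)=\dim_\C C_G(L)+\dim_\C L-\dim_\C C(L)$, and conclude with Lemmas \ref{lem:Codim} and \ref{lem:CodimFinite}; your Serre-criterion treatment of $k=1$ is an equally valid variant of the paper's even-dimensionality remark. The only step you compress is $\dim_\C N_S\git L=2\dim_\C W-2\dim_\C L$, which the paper obtains more directly by observing (via Kempf--Ness) that $N$ has TPIG as a $G$-variety and hence $N_S$ has TPIG as an $L$-variety---since you have already reduced to TPIG of $(V,G)$, you can invoke that instead of the stability/$1$-modularity detour, whose stated conclusion otherwise still needs the same Kempf--Ness input to produce closed orbits with finite isotropy in $N_S$.
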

\begin{proof}
Since the isotropy type strata of $N\git G$ are even dimensional, the case $k=1$ is trivial and we may assume that $k\geq 2$. Then as in the proof of Theorem \ref{thrm:Smooth=Princ3Large}, we may assume that $N$ has TPIG as a $G$-variety.

Let $n=\dim_\C V$. Let $L$ be a non-principal isotropy group for the action of $G$ on $N$ and let $(v,\xi)\in N$ have closed orbit and isotropy group $L$.
Let $S = W\oplus W^\ast$ be the corresponding  symplectic slice. By Corollary
\ref{cor:ShellSliceProperties},  the complex
codimension of $(N\git G)_{(L)}$ in $N\git G$ is the same as the complex codimension of $(N_S\git L)_{(L)}$ in $N_S\git L$.
As $N$ has TPIG as a $G$-variety so that $N_S$ has TPIG as an $L$-variety,
$\dim_\C N_S\git L = \dim_\C S - 2\dim_\C L$.

As an $L$-module,
\[
    V\oplus V^\ast = S\oplus \mathfrak{g}/\mathfrak{l} \oplus (\mathfrak{g}/\mathfrak{l})^\ast.
\]
Write $S = S^L\oplus \widehat{S}$. Then the complex codimension of $(N_S\git L)_{(L)}$ in $N_S\git L$ is
$\dim_\C \widehat{S} - 2\dim_\C L$. From the decomposition above,
\[
    \dim_\C S^L = \dim_\C (V\oplus V^\ast)^L - 2\dim_\C\mathfrak{g}^L + 2\dim_\C\mathfrak{l}^L,
\]
so that as $\dim_\C S = 2n - 2(\dim_\C G - \dim_\C L)$, we have
\[
    \dim_\C \widehat{S}
        = 2n - \dim_\C(V\oplus V^\ast)^L - 2\dim_\C G + 2\big(\dim_\C C_G(L) + \dim_\C L - \dim_\C C(L)\big).
\]
Using the fact that $\dim_\C N_G(L) = \dim_\C C_G(L) + \dim_\C L - \dim_\C C(L)$, see the proof of Lemma
\ref{lem:FirstCodim}, this gives
\[
    \dim_\C \widehat{S} = 2n - \dim_\C(V\oplus V^\ast)^L - 2\dim_\C G + 2\dim_\C N_G(L).
\]
Then as $\dim_\C(V\oplus V^\ast)^L = 2\dim_\C V^L$,
applying Lemmas \ref{lem:Codim} and \ref{lem:CodimFinite} yields $\dim_\C \widehat{S} \geq 2\dim_\C L + 2k$, and
the complex codimension of $(N_S\git L)_{(L)}$ in $N_S\git L$ is at least $2k$.
\end{proof}

Now, suppose that $(V,G)$ is $2$-large and $(N\git G)_{\pr} = (N\git G)_{\sm}$;
by Theorem \ref{thrm:Smooth=Princ3Large}, this is the case if $(V,G)$ is $3$-large.
As $(V,G)$ is $2$-modular, $N$ is normal,
implying that $N\git G$ is normal. The smooth points of $N\git G$ inherit a
symplectic $2$-form from that of $V$ by Corollary \ref{cor:SympFormSmooth}.
Then combining Theorem \ref{thrm:Codim} and Theorem \ref{thrm:FlennerCodim},
we have the following.

\begin{corollary}
\label{cor:SymplecticSing}
Suppose that $(V,G)$ is $2$-large and $(N\git G)_{\pr} = (N\git G)_{\sm}$.
Then the complex symplectic quotient $N\git G$ has symplectic singularities.
\end{corollary}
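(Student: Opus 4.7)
The plan is to deduce this corollary by assembling the pieces already set up in Sections \ref{subsec:NormalityShell}, \ref{subsec:SympSlice}, and \ref{subsec:Codim} and then invoking Flenner's extension theorem (Theorem \ref{thrm:FlennerCodim}). That is, I would verify the three hypotheses of Theorem \ref{thrm:FlennerCodim} for $X = N\git G$: normality, codimension $\geq 4$ for the singular locus, and existence of a holomorphic symplectic form on the smooth locus.

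First I would check normality. Since $(V,G)$ is $2$-large it is in particular $2$-modular, so Proposition \ref{prop:NormalityShell}({\it iii}) guarantees that the shell $N$ is normal. Normality then descends to the categorical quotient $N\git G$ by the reductive group $G$ in the standard way (integral closure commutes with taking $G$-invariants for reductive $G$).

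Next I would produce the symplectic $2$-form on $(N\git G)_{\sm}$. Because $2$-large implies $1$-large, $N$ is normal by the previous step, and the hypothesis $(N\git G)_{\pr} = (N\git G)_{\sm}$ is assumed, Corollary \ref{cor:SympFormSmooth} applies verbatim: the Poisson bracket on $\C[N]^G$ induces a regular symplectic form on the smooth locus, obtained by restricting the standard symplectic form $\omega$ on $V\oplus V^\ast$ to symplectic slices at principal points. Regular $2$-forms are in particular holomorphic, giving condition ({\it ii}) of Definition \ref{def:SymplecticSing}.

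Finally I would apply Theorem \ref{thrm:Codim} with $k=2$: the hypotheses $(V,G)$ $2$-large, $N$ normal, and $(N\git G)_{\pr}=(N\git G)_{\sm}$ are all in place, and the conclusion is that $N\git G \smallsetminus (N\git G)_{\sm}$ has complex codimension at least $2k = 4$ in $N\git G$. Flenner's theorem (Theorem \ref{thrm:FlennerCodim}) then immediately yields that $N\git G$ has symplectic singularities. In this corollary all the substantive work is already done upstream—the principal-equals-smooth reduction, the slice-theoretic construction of the symplectic form, and the codimension estimate via the isotropy stratification—so there is no real obstacle at this stage; the corollary is essentially a bookkeeping assembly.
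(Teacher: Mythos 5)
Your proposal is correct and follows essentially the same route as the paper: the paper's argument is exactly the assembly you describe — normality of $N$ (hence of $N\git G$) from $2$-modularity via Proposition \ref{prop:NormalityShell}({\it iii}), the symplectic form on $(N\git G)_{\sm}$ from Corollary \ref{cor:SympFormSmooth}, the codimension-$\geq 4$ bound from Theorem \ref{thrm:Codim} with $k=2$, and then Theorem \ref{thrm:FlennerCodim}.
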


That $N\git G$ is
Gorenstein with rational singularities then follows from Theorem \ref{thrm:NamikawaRatGoren}.
To complete the proof of Theorem \ref{thrm:GeneralMain}, then, we need only
demonstrate that $N\git G$ is graded Gorenstein; this will be shown in the next section.

% xxxxxxxxxxxxxxxxxxxxxxxxxxxxxxxxxxxxxxxxxxxxxxxxxxxxxxxxxxxxxxxxxxxxxxxxx

\subsection{Graded Gorenstein symplectic quotients}
\label{subsec:GradGoren}

The goal of this section is to prove the following.

\begin{theorem}
\label{thrm:GradeGoren}
Assume that $(V,G)$ is $1$-large, the corresponding shell $N$ is normal, and $(N\git G)_{\pr} = (N\git G)_{\sm}$.
If $N\git G$ is Gorenstein, then it is graded Gorenstein.
\end{theorem}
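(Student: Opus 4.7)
The plan is to exhibit a degree-$d$ generator of the canonical module $\omega_R$ of $R:=\C[N]^G$, where $d := \dim_\C N\git G$, and to read off $a(R)=-d$ from this.

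First I would review the bridge between Hilbert series, the $a$-invariant, and the degree of a generator of $\omega_R$. Since $R$ is Cohen-Macaulay (by Proposition \ref{prop:NormalityShell}({\it i}) together with the fact that invariants of a reductive group acting on a Cohen-Macaulay ring are Cohen-Macaulay) and Gorenstein by assumption, $\omega_R$ is an invertible graded $R$-module, hence $\omega_R\cong R(\ell)$ for a unique integer $\ell$. The standard identity $\Hilb_{\omega_R}(t)=(-1)^d\Hilb_R(1/t)$ for Cohen-Macaulay graded rings, combined with $\Hilb_{R(\ell)}(t)=t^{-\ell}\Hilb_R(t)$ and Stanley's functional equation (Theorem \ref{thrm:StanleyGorenHilb}), forces $\ell=a(R)$. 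Since the canonical generator of $R(\ell)$ sits in degree $-\ell$, the graded Gorenstein condition $a(R)=-d$ is equivalent to producing a generator of $\omega_R$ in degree $d$.

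Next I would identify such a generator using the symplectic structure. The form $\omega=\sum_j dz_j\wedge dw_j$ on $V\oplus V^\ast$ is homogeneous of degree $2$ with respect to the scaling grading (each $z_j,w_j$ has degree $1$). By Corollary \ref{cor:SympFormSmooth}, $\omega$ descends to a regular symplectic $2$-form on $(N\git G)_{\sm}=(N\git G)_{\pr}$, still of degree $2$. Because the smooth locus admits a symplectic form, $d$ is even, and the volume form $\eta:=\omega^{d/2}$ is a nowhere-vanishing holomorphic top-degree form on $(N\git G)_{\sm}$, homogeneous of degree $d$.

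Then I would extend $\eta$ to a global generator of $\omega_R$. Since $N$ is normal, so is $N\git G$ by invariant theory, and Serre's criterion implies the singular locus $(N\git G)\setminus(N\git G)_{\sm}$ has complex codimension at least $2$. The Gorenstein hypothesis makes $\omega_R$ invertible, hence reflexive, so sections extend uniquely across this codim-$\geq 2$ locus; in particular $\eta$ lifts to a global element of $\omega_R$ of degree $d$. The zero locus of a section of an invertible sheaf is pure of codimension $1$ if nonempty, but the zero locus of this extension is contained in the codim-$\geq 2$ singular set, so it is empty. Hence the extension is nowhere vanishing, i.e., a generator of the invertible $R$-module $\omega_R$.

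Combining the three steps gives $\omega_R\cong R(-d)$, so $a(R)=-d=-\dim R$, i.e., $R$ is graded Gorenstein. The main obstacle I anticipate is the grading bookkeeping of Step 1 (matching the various sign/shift conventions for the $a$-invariant with the geometric degree of the volume form) together with justifying carefully in Step 3 that reflexivity of the Gorenstein canonical module really permits the Hartogs-type extension across the singular locus.
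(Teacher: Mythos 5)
Your proposal is essentially correct and follows the same overall strategy as the paper: exhibit a homogeneous, nowhere-vanishing top-degree form of degree $d=\dim_\C N\git G$ on $(N\git G)_{\sm}$, use normality and the codimension-two singular locus to extend it to a generator of the canonical module, and conclude $a(R)=-d$. The one genuinely different ingredient is your choice of generator: you take $\omega_{red}^{d/2}$, the top power of the reduced symplectic form supplied by Corollary \ref{cor:SympFormSmooth} (whose hypotheses are exactly those of the theorem), whereas the paper never uses that corollary here and instead builds the generator from the complete-intersection structure of the shell, via the residue-type form $\sigma_N$ with $(df_1\wedge\dots\wedge df_{2n-g})|_N/\sigma_N=\alpha_N(f_1,\dots,f_{2n-g})$ and then $\sigma_{N\git G}=\iota_{A_1}\cdots\iota_{A_g}\sigma_N$, checking $G$-invariance and nonvanishing on $\pi(N_{\sm})\supset (N\git G)_{\sm}$ using $(N\git G)_{\pr}=(N\git G)_{\sm}$ and Proposition \ref{prop:NormalityShell}. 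Your route is arguably more direct, since nondegeneracy of $\omega_{red}$ on all of $(N\git G)_{\sm}$ makes the nonvanishing automatic; the paper's construction has the side benefit of making the $G$-action on the canonical form explicit (the determinant on $\mathfrak g^\ast$), which is reused later in the proof of Theorem \ref{thrm:TorusMain}.

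Two caveats. First, the step you defer as ``grading bookkeeping'' is precisely where the paper does real work: one must know that the identification of the graded canonical module $K_R$ (the grading entering Stanley's functional equation, Theorem \ref{thrm:StanleyGorenHilb}) with top-degree regular forms on $X_{\sm}$ is degree-preserving, with no hidden shift. This is the content of Lemma \ref{lem:DegreeCanonMod} and Corollary \ref{cor:DegreePreserv}, proved there via a homogeneous regular sequence and the \v{C}ech-type complex computing local cohomology; as written, your argument asserts rather than establishes this compatibility, so you would need to supply that lemma (or an equivalent citation) to close the proof. Second, your justification that $R=\C[N]^G$ is Cohen--Macaulay (``invariants of a reductive group acting on a Cohen--Macaulay ring are Cohen--Macaulay'') is false in general; it is harmless here only because Gorenstein is a hypothesis and already implies Cohen--Macaulay.
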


Clearly, Theorem \ref{thrm:GradeGoren}
and Corollary \ref{cor:SymplecticSing} imply Theorem \ref{thrm:GeneralMain}.
However, we will also have occasion to apply Theorem \ref{thrm:GradeGoren}
in cases that are not $3$-large; see Section \ref{sec:SU2}.

Let $R$ be a finitely generated $\N$-graded algebra over $\C$ of dimension $d$ with $R_0=\C$.  Assume that $R$ is Gorenstein
and normal. Let $K_R$ denote the canonical module of $R$ so that
$K_R$ is graded isomorphic to $R$ with a degree shift; see Section \ref{subsec:BackSympSing}. Let $\sigma$ be a
homogeneous module generator of $K_R$, and let $X := \Spec R$.

Let $I$ be the ideal of functions on $X\times X$ of the form $\sum_i f_i(x)\otimes h_i(y)$
such that $\sum_i f_i(x) h_i(x) = 0$, and let $\Omega(X) = I/I^2$. Then $I$ is a homogeneous ideal,
and the mapping $f\mapsto df \in \Omega(X)$ defined by $f \mapsto f\otimes 1 - 1 \otimes f + I^2$
is degree-preserving. Note that $\Omega(X)$ is a graded $R$ module by multiplication on the first
factor. Then the map $(h_1,\dots,h_d)\mapsto dh_1\wedge\dots\wedge dh_d$ is degree-preserving and maps
$d$-tuples of homogeneous functions to homogeneous elements of $\wedge^d(\Omega(X))$. Then as $\sigma$
is a homogenous generator of $K_R$, it is homogeneous when considered as a section of $\wedge^d(\Omega(X))$.
On the smooth locus $X_\sm$ of $X$, we can identify sections of $K_R$ with sections of $\wedge^d(\Omega(X))$. Since $X$ is normal,
$\C[X_{\sm}] = \C[X]$.  See \cite[Section II.8]{Hartshorne} or
\cite[Section 8.0]{CoxLittleSchenckToric}.

Let $f_1,\dots,f_d$ be a homogeneous regular sequence for $R$ and let $d_i := \deg f_i$.
On $X_\sm$, we have $df_1\wedge\dots\wedge df_d=\lambda\sigma$ where $\lambda$ is a regular function
on $X_{\sm}$. Since $X$ is normal, $\lambda$ extends to a regular function on $X$, and as $\sigma$ and
the $f_i$ are homogeneous, $\lambda$ is as well. Let $m:=\deg \lambda$. Note that if we calculate
$\lambda$ in $K_R$, whose grading may be different than that on $\wedge^d\Omega(X)$, the ratio
$\lambda$ remains the same.

Let $S=\C[f_1,\dots,f_d]$. Since $R$ is Gorenstein, $R=S\otimes H$ where $H$ is a finite dimensional
complex vector space whose Hilbert series has the form $1+\dots+t^r$. As a graded module,
$K_R=f_1\cdots f_d S\otimes H^*$ where $H^*$ has Hilbert series $1+\dots+t^{-r}$.
We demonstrate this with the following.

\begin{lemma}
\label{lem:DegreeCanonMod}
The image of $df_1\wedge\dots\wedge df_d$ in $K_R$ is $f_1\cdots f_d\otimes 1$ and hence has degree $\sum d_i$.
\end{lemma}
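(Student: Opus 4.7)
The plan is to verify the degree claim first, then establish the explicit identification with $f_1\cdots f_d\otimes 1$. For the degree: the differential $d$ preserves the grading on $R\to\Omega^1_R$, so $df_1\wedge\cdots\wedge df_d\in\wedge^d\Omega^1_R$ has degree $\sum d_i$. Since $R$ is normal, $K_R$ is reflexive and coincides with $\wedge^d\Omega^1_R$ on the smooth locus $X_{\sm}$, so this form extends uniquely to a section of $K_R$, still of degree $\sum d_i$.

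For the explicit form, I would use finite flat duality. Since $f_1,\ldots,f_d$ is a regular sequence, $R=S\otimes H$ is finite free over $S=\C[f_1,\ldots,f_d]$, yielding $K_R\simeq\Hom_S(R,K_S)$ as graded $R$-modules. Identifying the free rank-one $S$-module $K_S=S\cdot(df_1\wedge\cdots\wedge df_d)$ with $f_1\cdots f_d S$ via the generator identification (both elements of degree $\sum d_i$) then gives $K_R\simeq f_1\cdots f_d S\otimes H^*$, which is precisely the description set up in the excerpt.

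The element $df_1\wedge\cdots\wedge df_d\in K_R$ is the pullback $\pi^*(df_1\wedge\cdots\wedge df_d)$ along the finite map $\pi\co\Spec R\to\Spec S$. The crux of the proof is to show that, under the identification, the pullback map $\pi^*\co K_S\to K_R$ is the embedding $\omega\mapsto\omega\otimes 1$, where $1\in H^*$ denotes the functional dual to $1\in H$; equivalently, $\pi^*\omega\in\Hom_S(R,K_S)$ is the $S$-linear map that sends $1\in R$ to $\omega$ and annihilates the positive-degree part $H_+\subset H\subset R$. Applied to the generator $\omega=df_1\wedge\cdots\wedge df_d\leftrightarrow f_1\cdots f_d$ of $K_S$, this yields exactly $f_1\cdots f_d\otimes 1\in f_1\cdots f_d S\otimes H^*=K_R$, as claimed.

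The principal difficulty is verifying that the pullback $\pi^*$ takes this clean form under finite flat duality. This involves fixing the normalization of the duality isomorphism (for instance, absorbing the factor $N=\dim H$ that would otherwise arise from the Grothendieck trace $\operatorname{tr}_{R/S}(1)=N$) and exploiting the graded structure of $R$ to conclude that the contribution of positive-degree elements of $H$ vanishes. This is ultimately a local computation that may be carried out on the \'etale locus of $\pi$ (where $\pi^*$ is literally the identity on differentials) and then extended to all of $X$ by normality.
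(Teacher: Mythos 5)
Your route is genuinely different from the paper's (the paper computes $K_R$ as the graded dual of the top local cohomology via the \v{C}ech complex on $f_1,\dots,f_d$, uses $R=S\otimes H$ to reduce to the polynomial ring $S$, and settles that case by the one-variable computation $dx\mapsto x$ plus induction), but as written it has two genuine gaps. First, your opening degree argument is circular: the issue is not the degree of $df_1\wedge\dots\wedge df_d$ in $\wedge^d\Omega(X)$, which is trivially $\sum d_i$, but its degree in $K_R$ with the canonical grading coming from local cohomology. The identification of $K_R$ with sections of $\wedge^d\Omega(X)$ over $X_{\sm}$ is a priori only an isomorphism of modules and could shift degrees; that there is no shift is precisely what this lemma together with Corollary \ref{cor:DegreePreserv} establishes (the paper explicitly warns that the grading on $K_R$ ``may be different than that on $\wedge^d\Omega(X)$''). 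So ``extends to a section of $K_R$, still of degree $\sum d_i$'' assumes the conclusion.

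Second, the crux claim about $\pi^*$ is false for an arbitrary homogeneous complement $H$, which is all that is fixed here. The \'etale-locus computation you propose identifies $\pi^*\omega$ under $K_R\simeq\Hom_S(R,K_S)$ with the functional $a\mapsto\operatorname{tr}_{R/S}(a)\,\omega$, and this trace does not annihilate $H_+$: take $R=\C[x,y]$, $f_1=x^2+y^2$, $f_2=xy$, and $H$ spanned by $1,x,y,x^2$; then $\operatorname{tr}_{R/S}(x^2)=2f_1\neq 0$, so $\pi^*\omega$ corresponds to $4\,\omega\otimes 1^*+2f_1\,\omega\otimes (x^2)^*$, not to a multiple of $\omega\otimes 1^*$ (consistently, $df_1\wedge df_2=2(x^2-y^2)\,dx\wedge dy$ here). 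No grading argument removes these terms, since $\operatorname{tr}_{R/S}(h)$ is a homogeneous element of $S$ of degree $\deg h$ that need not vanish, rescaling by $N=\dim_{\C} H$ does not help, and any two $R$-linear graded identifications $K_R\simeq\Hom_S(R,K_S)$ differ only by a nonzero scalar, so no renormalization produces your formula. What your approach does give, once corrected to the trace formula, is that every component $\operatorname{tr}_{R/S}(h_i)\,\omega\otimes h_i^*$ has total degree $\sum d_i$ and the $1^*$-component is $N\omega\neq 0$, so the image of $df_1\wedge\dots\wedge df_d$ in $K_R$ is nonzero of degree $\sum d_i$ --- which is all that Corollary \ref{cor:DegreePreserv} actually uses --- but it does not yield the literal identity $f_1\cdots f_d\otimes 1$, and the ``annihilates $H_+$'' step as sketched is a genuine gap. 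Note also that you take for granted the polynomial-ring case (that the canonical grading of $K_S$ places $df_1\wedge\dots\wedge df_d$ in degree $\sum d_i$); that is standard, but it is exactly the content the paper supplies with its one-variable computation and induction.
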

\begin{proof}
By the definition of $K_R$ we have that $K_R^*$ is calculated as the $n$th cohomology of the complex
\begin{align*}
    0\to R\to\oplus_i R[f_i\inv]\to\dots&\to  R[f_2\inv,\dots,f_d\inv]\oplus\dots\oplus R[f_1\inv,\dots,f_{d-1}\inv]
    \\
    &\to R[f_1\inv,\dots,f_d\inv]\to 0,
\end{align*}
where all the mappings are inclusions up to a sign; see \cite[Section 2.1]{GotoWatanabe} and
\cite[Section 2.1]{HunekeLocalCohom}. Because $R=S\otimes H$, we may use the corresponding complex for $S$
and tensor each term with $H$. Hence, it is enough to consider the case where $R$ is replaced with $S$.

First consider the case of a polynomial ring in one variable $x$ of degree $e$. In this case, we have the complex
\[
    0   \to   \C[x]   \to \C[x,x\inv]   \to 0
\]
with cokernel $x\inv\C[x\inv]$. Taking the dual, we see that  $K_{\C[x]}\simeq x\C[x]$ with the canonical grading.
The mapping of $\Omega(\Spec \C[x])$ to $K_{\C[x]}$ sends the generator $dx$ of $\Omega(\Spec \C[x])$ to the
generator $x$ of $K_{\C[x]}$ and hence preserves degrees. By an induction argument, it follows that
in our case, $K_R\simeq f_1\cdots f_d\cdot S\otimes H^*$ with its canonical grading,
and the image of $df_1\wedge\dots\wedge df_d$ is $f_1\cdots f_d\otimes 1$.
\end{proof}

\begin{corollary}
\label{cor:DegreePreserv}
We have $r=m$ so that the isomorphism between sections of $K_R$ and $\wedge^d\Omega(X)$ is degree preserving.
In particular, $\sigma$ has degree $\sum d_i-m$.
\end{corollary}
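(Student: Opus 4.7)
The plan is to determine the degree shift between $\wedge^d\Omega(X)$ and $K_R$ by computing on the single explicit element $df_1\wedge\cdots\wedge df_d$, then read off both $r=m$ and $\deg\sigma=\sum d_i-m$ from a direct degree count.

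First, I would locate $\sigma$ in the explicit decomposition $K_R=f_1\cdots f_d\,S\otimes H^*$. Since $\sigma$ is a homogeneous module generator, it lies in the lowest nonzero graded piece of $K_R$. The factor $f_1\cdots f_d$ contributes degree $\sum d_i$, the polynomial algebra $S$ starts in degree $0$, and $H^*$ has Hilbert series $1+\cdots+t^{-r}$ so its minimum degree is $-r$. Hence the lowest degree of $K_R$ equals $\sum d_i-r$, and therefore $\deg\sigma=\sum d_i-r$.

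Next, I would exploit the defining relation $df_1\wedge\cdots\wedge df_d=\lambda\sigma$ on $X_{\sm}$. Because $X$ is normal, the equation extends to all of $X$ and $\lambda\in R$ is homogeneous of degree $m$. On the one hand, degree counting gives $\deg(\lambda\sigma)=m+\deg\sigma=m+\sum d_i-r$. On the other hand, Lemma~\ref{lem:DegreeCanonMod} identifies the image of $df_1\wedge\cdots\wedge df_d$ in $K_R$ with $f_1\cdots f_d\otimes 1$, which has degree $\sum d_i$. Equating these two expressions yields $m=r$, and substituting back gives $\deg\sigma=\sum d_i-m$.

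Finally, for the statement that the identification between sections of $K_R$ and $\wedge^d\Omega(X)$ is degree preserving, I would argue as follows. On $X_{\sm}$ both sheaves are rank-one locally free over the graded ring $R$, and the identification is $R$-linear; any two such graded isomorphisms differ by a global homogeneous unit, which by $R_0=\C$ must be a scalar. It therefore suffices to check the degree shift on one nonzero homogeneous element. Since $df_1\wedge\cdots\wedge df_d\in\wedge^d\Omega(X)$ has degree $\sum d_i$ (because $f\mapsto df$ preserves degrees) and maps to $f_1\cdots f_d\otimes 1\in K_R$, also of degree $\sum d_i$, the shift is zero, so the isomorphism preserves the grading.

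The only delicate point is ensuring the grading conventions on $H^*$ are consistent with those inherited from the local cohomology description of $K_R$; once this bookkeeping is settled, the result is a one-line degree comparison and there is no serious obstacle.
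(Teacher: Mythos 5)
Your proposal is correct and follows essentially the same route as the paper: it combines Lemma~\ref{lem:DegreeCanonMod} with the relation $df_1\wedge\dots\wedge df_d=\lambda\sigma$ and the graded structure $K_R=f_1\cdots f_d\,S\otimes H^*$ to force $r=m$, then reads off $\deg\sigma=\sum d_i-m$ and the degree preservation. The extra remarks about the rank-one identification being determined up to a scalar unit are just a fuller justification of the constant degree shift that the paper uses implicitly.
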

\begin{proof}
The degree of $\sigma$ in $\wedge^d\Omega(X)$ is $\sum d_i-m$ and its degree in $K_R$ is $\sum d_i-r$.
Hence $r=m$.
\end{proof}

We are now ready to prove the main result of this section.

\begin{proof}[Proof of Theorem \ref{thrm:GradeGoren}]
Let $(V,G)$ be $1$-large and assume that the shell $N$ is normal and that
$\C[V\oplus V^\ast]^G/(\mu)^G$
is Gorenstein. Let $n = \dim_\C V$ and $g = \dim_\C G$. Let $\vol$ be the volume element on $V\oplus V^\ast$
so that, up to sign, $\vol = dx_1\wedge\dots\wedge dx_n\wedge dy_1\wedge\dots\wedge dy_n$ has degree
$2n$ and is $G$-invariant. Choosing a basis $A_1,\ldots,A_g$ for $\mathfrak{g}$, we let $\mu_1,\dots,\mu_g$ denote the corresponding
entries of the moment map with respect to the dual basis for $\mathfrak{g}^\ast$.

Note that as $(V,G)$ is $0$-modular, the shell $N$ is a complete intersection by
\cite[Proposition 9.4]{GWSlifting}.
In particular, the algebra of regular functions on the shell
$\C[N] = \C[V\oplus V^\ast]/(\mu)$
is given by the quotient of the polynomial algebra $\C[V\oplus V^\ast]$ in $2n$ variables by the complete
intersection $(\mu)$ generated by $g$ quadratics, and hence
has Hilbert series
\[
    \Hilb_{\C[N]}(t)  =   \frac{(1 - t^2)^g}{(1 - t)^{2n}}
\]
so that by Theorem \ref{thrm:StanleyGorenHilb}, the $a$-invariant of
$\C[N]$ is
$2g-2n$. Hence, the generator $\sigma_N$ of the canonical module of $\C[N]$ has degree
$2n - 2g$. To describe $\sigma_N$ explicitly, if $f_1,\ldots,f_{2n-g} \in \C[V\oplus V^\ast]$
are homogeneous, representing homogeneous elements of $\C[N]$, define
\[
    \alpha_N(f_1,\dots,f_{2n-g})
    =   df_1\wedge\dots\wedge df_{2n-g}\wedge d\mu_1\wedge\dots\wedge d\mu_g/\vol,
\]
restricted to $N$. Note that $\alpha_N$ depends only on the elements of $\C[N]$ represented
by the $f_i$. Because the $d\mu_i$ have degree $2$, the value of $\alpha_N(f_1,\dots,f_{2n-g})$
is homogeneous of degree $\sum_i\deg f_i-(2n-2g)$.  Then the form $\sigma_N$ is defined by
\[
    (df_1\wedge\dots\wedge df_{2n-g})|_N/\sigma_N=\alpha_N(f_1,\dots,f_{2n-g}).
\]
Note that  $h\in G$ acts on $\sigma_N$   by the determinant of $h$ acting on $\mathfrak{g}^*$. Moreover, by Proposition
\ref{prop:NormalityShell}({\it iv}), $\sigma_N$ does not vanish on the smooth locus of $N$.

Define
\[
    \sigma_{N\git G}=\iota_{A_1}\dots \iota_{A_g}\sigma_N
\]
where $\iota_{A_i}$ denotes  contraction and the basis elements $A_i$ for $\mathfrak{g}$ are
identified with the corresponding vector fields on $N$. Then $\sigma_{N\git G}$ is $G$-invariant
and gives rise to an element of $\wedge^{2n-2g}\Omega((N\git G)_\sm)$.
Because
$(N\git G)_{\pr} = (N\git G)_{\sm}$, we have by
Proposition \ref{prop:NormalityShell}({\it iv}) and ({\it v}) that the image under the orbit map
$\pi\co N\to N\git G$ of the smooth locus $N_{\sm}$ contains $(N\git G)_\sm$. Recalling that $N$ and $N\git G$
are normal, as $\sigma_N$ does not vanish on $N_{\sm}$, it follows that $\sigma_{N\git G}$ is a generator of $\wedge^{2n-2g}\Omega(N\git G_\sm)$, and hence of the canonical module of
$\C[N]^G$.
Because the vector fields $A_i$ have degree zero, the degree of $\sigma_{N\git G}$ is $2n-2g$. Noting that
$\dim_\C N\git G = 2n - 2g$ completes the proof.
\end{proof}

% xxxxxxxxxxxxxxxxxxxxxxxxxxxxxxxxxxxxxxxxxxxxxxxxxxxxxxxxxxxxxxxxxxxxxxxxx
% xxxxxxxxxxxxxxxxxxxxxxxxxxxxxxxxxxxxxxxxxxxxxxxxxxxxxxxxxxxxxxxxxxxxxxxxx
% xxxxxxxxxxxxxxxxxxxxxxxxxxxxxxxxxxxxxxxxxxxxxxxxxxxxxxxxxxxxxxxxxxxxxxxxx

\section{The case of $G^0$ a torus}
\label{sec:Torus}

In this section, we prove Theorem \ref{thrm:TorusMain}. Because the shell $N$ depends only on
$G^0$, we first consider the case that $G = G^0$ is a torus.
Assume that $G = (\C^\times)^\ell$ for some positive integer $\ell$. Without loss of generality,
assume that $V^G = \{0\}$ for simplicity, and by replacing $G$ with its image in $\GL(V)$,
we assume that $(V,G)$ is faithful.

Choosing a basis for $V$ with respect to which the action of
$G$ is diagonal, we may identify $V$ with $\C^n$ and describe the action of $G$ with
a weight matrix $A = (a_{ij}) \in \Z^{\ell\times n}$. Specifically, in coordinates
$(z_1,\ldots,z_n)$ with respect to our basis, the action of $G$ is given by
\[
    (t_1,\ldots,t_\ell)\cdot(z_1,\ldots,z_n)
    :=
    \left(
        z_1\prod\limits_{i=1}^\ell t_i^{a_{i1}}, \ldots,
        z_n\prod\limits_{i=1}^\ell t_i^{a_{in}}
    \right).
\]
for $(t_1,\ldots,t_\ell)\in G = (\C^\times)^\ell$. Choosing the dual basis for $V^\ast$
and using coordinates
\linebreak
$(z_1,\ldots,z_n,w_1,\ldots,w_n)$ with respect to the concatenated
basis for $V\oplus V^\ast$, the weight matrix for the action of $G$ on $V\oplus V^\ast$
is $(A|-A)$. Using the symplectic form
$\omega = \sqrt{-1}/2 \sum_{j=1}^n dz_j \wedge d\overline{z}_j$ on $V$, and identifying
$\mathfrak{k}^\ast$  with $\R^\ell$ via the coordinates for $(\C^\times)^\ell$ used above,
the real moment map $J\co V\to \mathfrak{k}^\ast$ is given by the component functions
\begin{equation}
\label{eq:TorusMomentR}
    J_i(\bs{z},\bs{\overline{z}}) = \sum\limits_{j=1}^n a_{ij} z_j \overline{z_j},
    \quad\quad
    i=1,\ldots,\ell.
\end{equation}
It follows that the complexified moment map $\mu\co V\oplus V^\ast \to \mathfrak{g}^\ast$
is given by the component functions
\begin{equation}
\label{eq:TorusMoment}
    \mu_i(\bs{z},\bs{w}) = \sum\limits_{j=1}^n a_{ij} z_j w_j,
    \quad\quad
    i=1,\ldots,\ell.
\end{equation}
See \cite{FarHerSea,HerbigIyengarPflaum,HerbigSeaton2} for more details.

Using \cite[Lemma 2]{WehlauPopov}, we may reduce to the case that $(V,G)$ is stable;
see also \cite[Section 3]{FarHerSea} and \cite[Lemma 3]{HerbigSeaton}.
Specifically, we have the following.

\begin{lemma}
\label{lem:TorusStableRed}
Let $G = (\C^\times)^\ell$ and let $V$ be a faithful $G$-module with $V^G = \{0\}$.
Then there is a submodule $V^\prime$ of $V$ such that $(V^\prime, G^\prime)$ is stable and faithful,
where $G^\prime$ is the quotient of $G$ by the subgroup that acts trivially on $V^\prime$.
Moreover, the real shell and the real and complex symplectic quotients corresponding to $(V,G)$ and
$(V^\prime, G^\prime)$ coincide.
\end{lemma}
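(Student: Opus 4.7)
The plan is to invoke \cite[Lemma 2]{WehlauPopov}, which characterizes stability of a faithful torus action $(V,G)$ with $V^G = \{0\}$ in convex-geometric terms: the action is stable precisely when $0$ lies in the interior of the convex hull of the weight vectors $a_1, \ldots, a_n \in \Z^\ell$ (the columns of the weight matrix $A$). If this condition fails, there is a supporting hyperplane, i.e., a nonzero $c \in \R^\ell$ with $c \cdot a_j \geq 0$ for all $j$ and $c \cdot a_{j_0} > 0$ for at least one $j_0$. I would take $V'$ to be the coordinate submodule indexed by a subset $S \subset \{1, \ldots, n\}$ obtained by iteratively discarding all coordinates with $c \cdot a_j > 0$ for the current supporting hyperplane and then passing to the quotient torus defined by the remaining weights (this terminates because $n$ is finite). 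Defining $G'$ as the quotient of $G$ by the subgroup acting trivially on $V'$, the resulting $(V', G')$ is faithful with $(V')^{G'} = \{0\}$, and by \cite[Lemma 2]{WehlauPopov} it is stable.

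The observation that makes the reduction harmless is that each discarded coordinate vanishes identically on the real shell $M$. For a supporting hyperplane datum $c$ as above, the real combination $\sum_i c_i J_i = \sum_j (c \cdot a_j)\, z_j \overline{z_j}$ from Equation \eqref{eq:TorusMomentR} is a sum of non-negative terms that vanishes on $M$, forcing $z_{j_0} \equiv 0$ on $M$ whenever $c \cdot a_{j_0} > 0$. Iterating this argument through the reduction process, every coordinate $z_j$ with $j \notin S$ vanishes on $M$. Consequently $M$ coincides set-theoretically with $M' \subset V'$ when $V'$ is viewed as a coordinate subspace of $V$, and the components of $J$ that are killed in passing to $G'$ are forced to be $\R$-linear combinations of the components of $J'$ on the locus where the discarded coordinates vanish. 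Hence the defining ideals of $M$ inside $\R[V]$ and of $M'$ inside $\R[V']$ match after restriction.

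Finally, the kernel of $K \twoheadrightarrow K'$ acts trivially on $V'$ and therefore on $M$, so $M/K = M'/K'$ as semialgebraic sets carrying the same $\R$-algebra of regular functions; this gives $\R[M_0] = \R[M_0']$, and the complex symplectic quotient of Definition \ref{def:CSympQuot} is then unchanged since it is $\Spec\bigl(\R[M_0] \otimes_{\R} \C\bigr)$. The main subtlety I anticipate is ensuring that the iterative passage $G \mapsto G'$ is well defined and produces a torus: at each stage the kernel of $G$ acting on the current coordinate submodule is the subtorus dual to the $\Z$-span of the discarded weights modulo the retained ones, so after the final step the reduced weight matrix indexed by $S$ has full row rank and $G'$ is a torus acting faithfully on $V'$, completing the proof.
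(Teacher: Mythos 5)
Your argument is correct, but it reaches the key point by a different route than the paper. Both proofs rest on the same two pillars: (a) every coordinate outside the stable submodule $V'$ vanishes identically on the real shell $M$, so $M\subseteq V'$, and (b) the complex symplectic quotient of Definition \ref{def:CSympQuot} is the complexification of $\R[M_0]$, so once the real data agree nothing more is needed. Where you differ is in how (a) and the stability of $(V',G')$ are established: the paper argues that instability forces $\dim_{\C}V\git G<n-\ell$, transfers this to $M_0$ and $M$ via the Kempf--Ness homeomorphism \cite[Corollary 4.7]{GWSkempfNess} to conclude that some coordinates vanish on $M$ and occur in no invariant, and then quotes \cite[Lemma 2]{WehlauPopov} to get in one stroke that the restriction to $V'$ is stable with the same categorical quotient; you instead run an explicit convex-geometric induction, using a supporting functional $c$ and the positivity of $\sum_i c_iJ_i=\sum_j(c\cdot a_j)|z_j|^2$ from Equation \eqref{eq:TorusMomentR} to force the discarded coordinates to vanish on $M$, and the ``$0$ in the interior of the weight polytope'' criterion to certify stability at termination. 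Your version is more constructive and essentially self-contained (it also shows directly that discarded coordinates appear in no invariant monomial, by pairing an invariance relation $\sum_j m_ja_j=0$ with $c$), at the cost of a little bookkeeping you only gesture at: at later stages the supporting functional lives on the quotient torus and must be lifted to $\mathfrak k^\ast$, after which the positivity argument applies on $M$ because the previously discarded coordinates already vanish there. Two small caveats, neither a gap: the convexity characterization of stability you attribute to \cite[Lemma 2]{WehlauPopov} may not be the literal content of that lemma (the paper uses it for the statement that restriction to $V'$ is stable with unchanged invariant ring), but the criterion is standard and follows, e.g., from the Hilbert--Mumford criterion; and your remark that the ``killed'' components of $J$ are combinations of components of $J'$ on $V'$ can be sharpened to the cleaner statement that they vanish identically on $V'$, since the corresponding Lie algebra elements act trivially there.
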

\begin{proof}
If $(V,G)$ is stable, then there is nothing to prove, so assume not.
Then $V\git G$ has complex dimension strictly less than $n - \ell$. By
the Kempf-Ness homeomorphism \cite[Corollary 4.7]{GWSkempfNess}, the real symplectic quotient
$M_0 = M/K$ has
real dimension less than $2n - 2\ell$ from which it follows that the real shell
$M$ has real dimension less than $2n - \ell$. Considering the moment map in Equation \eqref{eq:TorusMomentR},
it follows that there are coordinates $z_i$ that vanish on $M$; again by the Kempf-Ness homeomorphism,
these coordinates
do not appear in any invariant polynomial.
By \cite[Lemma 2]{WehlauPopov}, restricting to
the subspace of $V^\prime$ of $V$ on which all such coordinates vanish yields a stable torus-module
$(V^\prime,G^\prime)$ such that $V\git G = V^\prime\git G^\prime$. Moreover, as the real shell $M$ is contained
in $V^\prime$,
restricting to $(V^\prime,G^\prime)$ does not change the real symplectic
quotient $M_0$. As the complex symplectic quotient
is defined in Definition \ref{def:CSympQuot} to be
the complexification of the real symplectic
quotient, it also does not change the complex symplectic quotient.
\end{proof}

Note that Lemma \ref{lem:TorusStableRed} can be understood as an application of the Luna-Richardson
Theorem, \cite[Corollary 4.4]{LunaRichardson}. Specifically, if $H$ is the principal isotropy group
of $V$, then inclusion yields an isomorphism $\C[V^H]^{N_G(H)/H} = \C[V^H]^{G/H} \simeq \C[V]^G$.
Because $H$ acts with no closed orbits on a $G$-complement to $V^H$, the invariants only involve
coordinates of $V^H$. By construction, $(V^H, G/H)$ has FPIG and hence is stable, and the real shell $M$ is contained in $V^H$.

Reducing to the case of $(V,G)$ stable and faithful, we can now demonstrate the following.
See also \cite[Theorem 2.2]{Bulois}, where it was demonstrated independently and simultaneously without the hypothesis of stable
that $N$ is normal if and only if it is irreducible.

\begin{lemma}
\label{lem:TorusNormalShell}
Suppose that $G = (\C^\times)^\ell$ is a torus and that $(V,G)$ is stable and faithful. Then both the shell
$N$ and the complex symplectic quotient $N\git G$ are normal varieties.
\end{lemma}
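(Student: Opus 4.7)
The plan is to verify that the shell $N$ is an irreducible, normal complete intersection, from which normality of $N\git G$ follows because invariants of a normal integral domain under a reductive group action are again normal.

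After reducing via Lemma \ref{lem:TorusStableRed} to the case where $(V,G)$ is stable and faithful with $V^G=\{0\}$, faithfulness is equivalent to $\operatorname{rank}(A)=\ell$. For $v\in V$ with support $S_v$, the isotropy $G_v$ has complex dimension $\ell-\operatorname{rank}(A_{S_v})$. Since removing $n-|S|$ columns from a rank-$\ell$ matrix drops the rank by at most $n-|S|$, we have $\operatorname{rank}(A_S)\geq \ell-(n-|S|)$ for every $S\subseteq\{1,\dots,n\}$; equivalently $V_{(r)}$ has complex codimension at least $r$, so $(V,G)$ is $0$-modular, and Proposition \ref{prop:NormalityShell}({\it i}) gives that $N$ is a Cohen-Macaulay complete intersection of complex dimension $2n-\ell$.

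Stability now enters. For a torus, closedness of the principal orbit is equivalent to $0$ lying in the relative interior of the convex hull of $\{a_j\}_{j=1}^n$, so there exists a strictly positive linear relation $\sum_j\lambda_j a_j=0$ with all $\lambda_j>0$. For any $S$ with $r:=\operatorname{rank}(A_S)<\ell$, reducing this relation modulo $\operatorname{span}(A_S)$ yields a positive relation among the images of $\{a_j : j\in S^c\}$ in the $(\ell-r)$-dimensional quotient; since these images must span the quotient by faithfulness, they cannot be linearly independent once they satisfy a positive relation, whence
\begin{equation}
\label{eq:torus-stability-estimate}
n-|S|\geq \ell-\operatorname{rank}(A_S)+1\quad \text{whenever}\quad \operatorname{rank}(A_S)<\ell.
\end{equation}
Applying \eqref{eq:torus-stability-estimate} for each $r\geq 1$ shows $(V,G)$ is $1$-modular, so Proposition \ref{prop:NormalityShell}({\it ii}) gives that $N$ is reduced and irreducible.

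To finish normality of $N$, I would invoke Proposition \ref{prop:NormalityShell}({\it vi}): it suffices to show that $N\smallsetminus(N\cap R)$ has complex codimension at least $2$ in $N$. Stratify by $S=S_z\cup S_w$; for each $S$ with $r:=\operatorname{rank}(A_S)<\ell$, the stratum $X_S=\{(z,w)\in N : S_z\cup S_w=S\}$ sits in the $2|S|$-dimensional coordinate subspace $\{z_j=w_j=0 : j\notin S\}$, on which the restrictions of $\mu_1,\dots,\mu_\ell$ span a rank-$r$ subspace of linearly independent quadratics. The complete-intersection argument of the second paragraph, applied now to the subtorus quotient $G/K_S$ acting faithfully on $V_S$, yields $\dim X_S\leq 2|S|-r$, and combining this with \eqref{eq:torus-stability-estimate} gives $\dim X_S\leq 2n-\ell-2$ as required. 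Hence $N$ is normal. Since $G$ is reductive, $\C[N]^G$ inherits normality: any $\alpha\in\operatorname{Frac}(\C[N]^G)$ integral over $\C[N]^G$ is integral over $\C[N]$, lies in $\C[N]$ by normality, and being $G$-invariant lies in $\C[N]^G$. The main obstacle is the codimension estimate in the final step, and specifically the inequality \eqref{eq:torus-stability-estimate}; without the extra ``$+1$'' coming from stability, $N\smallsetminus R$ could have codimension only $1$ and $N$ would fail to be normal.
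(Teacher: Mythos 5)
Your proposal is correct, and it reaches the conclusion by a genuinely different route than the paper. The paper's proof is short because it quotes that a stable faithful torus module is $1$-large (Herbig--Schwarz and Vinberg), so that $(\mu)$ is radical and $N$ is an irreducible Cohen--Macaulay complete intersection, and then checks regularity in codimension one concretely: after row-reducing the weight matrix to $(D|C)$ with $D$ diagonal and nonzero diagonal entries, $d\mu$ has full rank at any point of $N$ where no pivot pair $(z_j,w_j)$ vanishes, so the singular locus lies in finitely many codimension-two coordinate conditions, and Serre's criterion applies. You instead establish $0$- and $1$-modularity directly from the weight matrix --- $0$-modularity from $\operatorname{rank}(A_S)\geq\ell-(n-|S|)$, and $1$-modularity from the convexity characterization of stability, which yields the inequality $n-|S|\geq\ell-\operatorname{rank}(A_S)+1$ when $\operatorname{rank}(A_S)<\ell$ --- obtaining reducedness and irreducibility from Proposition \ref{prop:NormalityShell}({\it i}),({\it ii}) rather than from quoted $1$-largeness, and you then verify the hypothesis of Proposition \ref{prop:NormalityShell}({\it vi}) by stratifying $N\smallsetminus(N\cap R)$ by supports and bounding each stratum via the complete-intersection count for the sub-representation $(V_S,G/K_S)$. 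What the paper's route buys is brevity by citation; what yours buys is a self-contained combinatorial argument that also makes explicit the codimension estimate which the paper states tersely (``therefore $N$ is smooth in codimension $1$''), and in fact your count gives codimension at least $3$ for the infinite-isotropy locus since $\operatorname{rank}(A_S)\leq\ell-1$; you also spell out why normality descends to $\C[N]^G$, which the paper treats as standard. Two cosmetic slips, neither affecting the proof: the reduction to $V^G=\{0\}$ is not what Lemma \ref{lem:TorusStableRed} provides (under your hypotheses that lemma is vacuous), but your convexity and rank arguments go through verbatim when zero weights are present, so no reduction is needed; and faithfulness is stronger than $\operatorname{rank}(A)=\ell$ (the latter only says the kernel is finite), though the rank statement is all you use. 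Your closing remark is exactly the right diagnosis of where stability enters: without the extra ``$+1$'' the locus where $d\mu$ drops rank could meet $N$ in codimension one and normality would fail.
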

\begin{proof}
By \cite[Theorem 3.2]{HerbigSchwarz}, \cite[Proposition 1]{VinbergComplexity},
$(V,G)$ is $1$-large, and by Lemma \ref{lem:dimVminusdimG}, it follows that
$\dim_\C V > \dim_\C G$. The shell $N$ is a complete intersection and Cohen-Macaulay
by Proposition \ref{prop:NormalityShell}({\it i}). By \cite[Corollary 4.3]{HerbigSchwarz},
$(\mu)$ is radical and hence the ideal of $N$.
Because row-reducing the weight matrix $A$ over $\Z$ corresponds to choosing a
different basis for $\mathfrak{g}^\ast$ and corresponding coordinates for $G$, we may assume that
the weight matrix is in the form $A = (D|C)$ where $D$ is diagonal and
has strictly negative entries.

Using Equation \eqref{eq:TorusMoment}, we express the Jacobian matrix of the moment map
$\mu\co V\oplus V^\ast \to \mathfrak{g}^\ast$ as
\[
    d\mu = (D_1 | \cdots | D_2 | \cdots )
\]
where $D_1$ is the $\ell\times\ell$ diagonal matrix with $(i,i)$-entry $a_{ii}w_i$,
$D_2$ is the $\ell\times\ell$ diagonal matrix with $(i,i)$-entry $a_{ii}z_i$,
and the $\cdots$ indicate $\ell\times(n-\ell)$ blocks.
As $a_{ii}\neq 0$ for $i \leq \ell$, it follows from Proposition \ref{prop:NormalityShell}({\it iv})
that $(\bs{z},\bs{w})\in V\oplus V^\ast$ is a smooth point of $N$ unless $z_j = w_j = 0$ for some $j$.
Therefore, $N$ is smooth in codimension $1$.
Then by Serre's Criterion \cite[Theorem 23.8]{MatsumuraBook}, $N$ is Cohen-Macaulay
and regular in codimension $1$, hence normal.
\end{proof}

\begin{remark}
Note that there is a mistake in the proof of \cite[Theorem 3]{HerbigSeaton2}, where it is incorrectly
assumed that the torus representation is $2$-large. Lemma \ref{lem:TorusNormalShell} offers an
alternate proof that corrects this mistake.
\end{remark}

We now have the following.

\begin{proposition}
\label{prop:TorusRationalShell}
Suppose that $G = (\C^\times)^\ell$ is a torus and that $(V,G)$ is stable and faithful. Then both the shell
$N$ and the complex symplectic quotient $N\git G$ have rational singularities.
\end{proposition}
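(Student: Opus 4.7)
The plan is to first reduce the claim for $N\git G$ to the claim for $N$ using Boutot's theorem \cite{Boutot}: since $G$ is reductive acting on the affine variety $N$, rational singularities of $N$ pass to the affine GIT quotient $N\git G$. Hence the core task is to prove that the shell $N$ itself has rational singularities.

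For $N$, I would combine the explicit structure from the proof of Lemma \ref{lem:TorusNormalShell} with an induction on $n = \dim_\C V$. From that proof, $N$ is a normal complete intersection, hence Gorenstein and Cohen-Macaulay, and is smooth off the locus where some pair $(z_j,w_j)$ simultaneously vanishes; thus
\[
    N\smallsetminus N_{\sm} \subset \bigcup_{j=1}^n \big(N\cap\{z_j = w_j = 0\}\big).
\]
Each coordinate slice $N\cap\{z_j = w_j = 0\}$ is itself the shell of the torus representation on $\{z\in V\mid z_j=0\}$ obtained by deleting the $j$-th weight column. After passing if necessary to a stable, faithful quotient via Lemma \ref{lem:TorusStableRed}, the induction hypothesis applies to this smaller shell and delivers rational singularities on each slice, hence on $N\smallsetminus N_{\sm}$.

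With rational singularities on the singular locus in hand, I would pass to $N$ itself by combining the Gorenstein structure with a deformation argument. Since $(V,G)$ is $1$-large, the moment map $\mu\co V\oplus V^\ast\to \mathfrak{g}^\ast$ is flat (its fibers are equidimensional complete intersections) and $d\mu$ has maximal rank on a dense open set by Proposition \ref{prop:NormalityShell}, so the generic fiber $\mu\inv(\xi)$ is smooth. Thus $N=\mu\inv(0)$ is the special fiber of a flat family with smooth general member. Combined with normality, Gorensteinness, and the codimension-$\geq 2$ control on the singular locus provided by the inductive step, standard criteria (Elkik's theorem, or the equivalence of rational and canonical singularities for normal Gorenstein varieties) then yield rational singularities of $N$.

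The main obstacle I expect is precisely this last step: cleanly combining the inductive information on the singular locus with the deformation picture to conclude rational singularities of the full shell. A possible alternative, bypassing the induction entirely, is to construct an explicit crepant resolution of $N$ by iterated blowups along the coordinate strata $\{z_j=w_j=0\}$ and invoke Grauert--Riemenschneider vanishing; however, tracking the exceptional divisors and verifying crepancy under these blowups appears to require nontrivial combinatorial bookkeeping tied to the matroid of the weight matrix $A$.
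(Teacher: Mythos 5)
Your reduction of $N\git G$ to $N$ via Boutot is exactly what the paper does, but the argument for $N$ itself has two genuine gaps. First, knowing that each coordinate slice $N\cap\{z_j=w_j=0\}$ is (after Lemma \ref{lem:TorusStableRed}) a smaller shell with rational singularities says nothing about whether $N$ \emph{itself} has rational singularities at points of that slice: rational singularity is a local property of the ambient variety $N$ at a point, not a property of a subvariety passing through it, so "rational singularities on each slice, hence on $N\smallsetminus N_{\sm}$" does not follow. The paper's induction is set up differently precisely to avoid this: at any nonzero point of $N$ some coordinate, say $z_1$, is nonzero; after row-reducing the weight matrix one localizes at $z_1\neq 0$, solves $\mu_1=0$ for $w_1$, and identifies $N$ there with $\C\times N'$, where $N'$ is the shell of an $(\ell-1)$-dimensional torus representation. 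This gives rational singularities of $N$ at every point other than the origin by induction on $\ell$ (base case $\ell=1$: isolated singularity).

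Second, the closing step does not close. Elkik's theorem propagates rational singularities from a fiber to nearby fibers, not from a smooth general fiber back to the special fiber $\mu^{-1}(0)$; indeed rational singularities are not stable under specialization (a family of smooth cubic surfaces can degenerate to a cone over an elliptic curve, which is normal and Gorenstein but not rational). Likewise "rational $=$ canonical for normal Gorenstein varieties" merely renames the problem, since you have not established canonical singularities. The missing ingredient, and the crux of the paper's proof, is the graded criterion of Flenner and Watanabe: a normal graded ring with negative $a$-invariant that has rational singularities away from the cone point has rational singularities. The negative $a$-invariant is read off from the complete-intersection Hilbert series $(1-t^2)^{\ell}/(1-t)^{2n}$, giving $a$-invariant $2(\ell-n)<0$ because stability forces $\ell<n$ (Lemma \ref{lem:dimVminusdimG}); note Example \ref{ex:TorusShellNotRational} shows the conclusion genuinely fails when $\ell=n$, so any correct argument must use this negativity or an equivalent, which your proposal never does. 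With Flenner--Watanabe in hand, the localization-plus-induction argument above on the punctured cone finishes the proof, and Boutot then handles $N\git G$ as you intended.
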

\begin{proof}
If the symplectic quotient is a point, then the result is trivial, so assume not.
As above, we assume without loss of generality that $V^G=\{0\}$ and choose coordinates with respect
to which $A = (D|C) \in \Z^{\ell\times n}$, $\ell< n$, with $D$ diagonal having strictly negative entries
on the diagonal. Recalling that $N$ is a complete intersection
with $(\mu)$ generated
by $\ell$ quadratics, the Hilbert series of
$\C[N] = \C[V\oplus V^\ast]/(\mu)$ is given by
\begin{equation}
\label{eq:TorusShellHilbSer}
    \Hilb_{\C[N]}(t) = \frac{(1 - t^2)^\ell}{(1 - t)^{2n}}
\end{equation}
so that the $a$-invariant of $N$ is $2(\ell-n)$. In particular, $\ell < n$ implies that the
$a$-invariant is negative. With this, the proof is by induction on $\ell$.

If $\ell = 1$, then the moment map $\mu\co V\oplus V^\ast \to \mathfrak{g}^\ast$ is given by
the single function
\[
    \mu(\bs{z},\bs{w}) = \sum\limits_{j=1}^n a_j z_j w_j
\]
with Jacobian
\[
    d\mu(\bs{z},\bs{w}) = (a_1 w_1, \ldots, a_n w_n, a_1 z_1, \ldots, a_n w_n).
\]
As $V^G = \{0\}$, each $a_j \neq 0$ so that $d\mu$ has rank $1$ away from the origin. It follows that
$N$ is Cohen-Macaulay, normal, has negative $a$-invariant, and is regular away from the origin so that,
by a theorem of Flenner and Watanabe
\cite[Satz 3.1]{FlennerRational}, \cite[Theorem 2.2]{WatanabeRational},
\cite[Theorem 9.2]{HunekeTight}, $N$ has rational singularities.

Now assume that $G$ has
complex dimension $\ell$ and that the result holds for representations of tori of dimension
less than $\ell$. We have established that
$\C[N]$ is normal, Cohen-Macaulay, and has negative $a$-invariant,
so again by the theorem of Flenner and Watanabe, it suffices to show that $N$ has rational singularities away from the
origin. Let $p \in N$ be a nonzero point, and then $p$ has a nonzero coordinate. By switching $V$ and $V^\ast$
and permuting the bases for $V$ and $V^\ast$, we may assume that $p$ has nonzero $z_1$-coordinate. Recalling that
$V^G = \{0\}$ and row-reducing $A$ (i.e., changing bases for $\mathfrak{g}^\ast$) results in a weight matrix
$A = (D|C)$ such that $a_{11}\neq 0$ and $a_{1i} = 0$ for $i = 2,\ldots,\ell$. Then
\[
    \mu_1(\bs{z},\bs{w})  = a_{11}z_1 w_1 + \sum\limits_{j=\ell+1}^n a_{1j} z_j w_j,
\]
and each $\mu_i$ for $i > 1$   involves neither $z_1$ nor $w_1$. Localizing to the set $z_1 \neq 0$,
we adjoin an inverse to $z_1$ and express the condition $\mu_1 = 0$ as
\[
    w_1 = - \sum\limits_{j=\ell+1}^n \frac{a_{1j} z_j w_j}{a_{11}z_1},
\]
eliminating $w_1$. Then on the set $z_1\neq 0$, $N$ is
isomorphic to the product of $\C$
with the shell associated to the $(\ell-1)$-dimensional torus action with weight matrix formed by
removing the first row and column from $A$. This has rational singularities by the inductive hypothesis,
completing the proof
that $N$ has rational singularities. Then $N\git G$ has rational singularities by a theorem of Boutot
\cite{Boutot}.
\end{proof}

As described above, if $(V,G)$ is not stable, then the corresponding complex symplectic quotient
coincides with that of a stable torus representation so that the symplectic quotient still has
rational singularities. However, the shell $N$ may fail to
have rational singularities.

\begin{example}
\label{ex:TorusShellNotRational}
Let $G = (\C^\times)^n$ act on $V = \C^n$ with weight matrix given by the identity matrix.
Then the only closed orbit is the origin so that $(V,G)$ is not stable. Noting that
$\ell = n$ in Equation \eqref{eq:TorusShellHilbSer}, one observes that the $a$-invariant
of the shell $N$ is zero so that, again by the theorem of Flenner and Watanabe
\cite[Satz 3.1]{FlennerRational}, \cite[Theorem 2.2]{WatanabeRational}, \cite[Theorem 9.2]{HunekeTight},
$N$ does not have rational singularities.
\end{example}

In order to apply Theorem \ref{thrm:GradeGoren} to this case, we will need the following.

\begin{lemma}
\label{lem:TorusSingular}
Let $G = (\C^\times)^\ell$ be a torus and let $(V,G)$ be $1$-modular. Then $N\git G$ is not smooth.
\end{lemma}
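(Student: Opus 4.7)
The plan is to argue by contradiction using the Poisson structure on $\C[N]^G$ inherited from $V\oplus V^*$ together with a codimension count for the non-principal stratum.

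I would first reduce to the case $V^G=\{0\}$: writing $V=V^G\oplus\widehat V$ yields
\[
    N\git G \;\simeq\; \bigl(V^G\oplus (V^G)^*\bigr)\times\bigl(N_{\widehat V}\git G\bigr),
\]
and since the first factor is smooth, $N\git G$ is smooth if and only if $N_{\widehat V}\git G$ is. Because the isotropy of $v^G+\widehat v$ coincides with that of $\widehat v$, the module $(\widehat V,G)$ remains $1$-modular and satisfies $\widehat V^G=\{0\}$. Lemma \ref{lem:dimVminusdimG} then gives $n:=\dim_\C\widehat V\geq\ell+1$, so $d:=\dim_\C(N\git G)=2(n-\ell)\geq 2$.

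Now suppose for contradiction that $N\git G$ is smooth. As $\C[N]^G$ is a connected $\mathbb N$-graded domain smooth at its unique $\C^\times$-fixed point, it must be a polynomial ring $\C[y_1,\dots,y_d]$ on homogeneous generators $y_i$ of degrees $d_i\geq 1$. Because $V^G=\{0\}$ there are no linear $G$-invariants on $V\oplus V^*$, so $d_i\geq 2$ for every $i$. The Poisson bracket on $\C[N]^G$ from Definition \ref{def:CSympQuot} has degree $-2$, hence $\{y_i,y_j\}$ is homogeneous of degree $d_i+d_j-2\geq 2$ and vanishes at the origin. Consequently the Pfaffian $\operatorname{Pf}(\{y_i,y_j\})\in\C[y_1,\dots,y_d]$ is homogeneous of degree $\sum_i d_i - d$ and vanishes at the origin.

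The crux is to show that the non-principal locus in $N\git G$ has complex codimension at least $2$. For any subtorus $H\subseteq G$ of dimension $r\geq 1$, every $v\in V^H$ satisfies $\dim_\C G_v\geq r$, so $V^H\subseteq\bigcup_{r'\geq r}V_{(r')}$ and the $1$-modular hypothesis forces $\codim_\C V^H\geq r+1$. The image $Y_H\subseteq N\git G$ of the corresponding isotropy stratum is the symplectic reduction of $V^H\oplus (V^H)^*$ by $G/H$ (with moment map taking values in $(\mathfrak g/\mathfrak h)^*$), of dimension at most $2\dim_\C V^H-2(\ell-r)$; thus $\codim_\C Y_H\geq 2(\codim_\C V^H - r)\geq 2$. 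Since the Poisson structure is non-degenerate on the principal stratum, $\operatorname{Pf}(\{y_i,y_j\})$ is a nonzero element of $\C[y_1,\dots,y_d]$ whose zero set is contained in, hence equal to, the non-principal locus; therefore this zero set has complex codimension at least $2$ in $\C^d$. But any non-constant polynomial on $\C^d$ cuts out a hypersurface of codimension one, forcing $\operatorname{Pf}(\{y_i,y_j\})$ to be a nonzero constant. Comparing degrees gives $\sum_i d_i = d$, which contradicts $d_i\geq 2$ and $d\geq 2$. The main obstacle is the codimension estimate for the non-principal locus; the remainder is elementary degree bookkeeping.
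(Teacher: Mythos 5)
Your route is genuinely different from the paper's (which simply exhibits, after arranging the first $\ell$ characters to be independent, at least $3(n-\ell)$ minimal generators of $\C[N]^G$ — the surviving $z_kw_k$ together with two further minimal invariants for each $k>\ell$ — against the bound $\dim_\C N\git G\le 2(n-\ell)$, so the ring cannot be polynomial), but as written your Pfaffian argument has real gaps, located exactly where $1$-modularity without stability or FPIG must be exploited. First, you assert $d=\dim_\C N\git G=2(n-\ell)$ and that the Poisson structure is non-degenerate on the principal stratum. Under $1$-modularity alone, $(V,G)$ need not be stable nor have FPIG (e.g.\ $\C^\times$ acting on $\C^2$ with weights $(1,2)$), so neither claim is automatic: both amount to showing that $N$ contains closed $G$-orbits with \emph{finite} isotropy, which requires an argument (for instance, that $1$-modularity forces $\ker A$ not to lie in any coordinate hyperplane, so $N$ contains points with all $2n$ coordinates nonzero, whose orbits are closed with finite isotropy). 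Without this, the Pfaffian could vanish identically and the degree bookkeeping proves nothing; this is precisely the degenerate behavior one sees in non-$1$-modular examples such as Example \ref{ex:HarmOscRC}.

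Second, the estimate $\dim_\C Y_H\le 2\dim_\C V^H-2(\ell-r)$ is not justified and is false as stated when $G/H$ acts on $V^H$ with positive-dimensional kernel (if $V^H=V^{H''}$ for a larger subtorus $H''$, the reduction of $V^H\oplus(V^H)^\ast$ has dimension governed by the effective torus, exceeding your bound; when $V^H=\{0\}$ your bound is even negative). It can be repaired by replacing $H$ with the generic isotropy subtorus of $V^H$ and invoking $1$-modularity for that larger subtorus, but even then the claimed dimension of the reduced slice data needs proof, since $(V^H,G/H)$ need not be $1$-modular — the same ``generic finite isotropy on the shell'' issue as above, now without the hypothesis to lean on. Finally, your codimension bound only covers strata with positive-dimensional isotropy, whereas you need the whole degeneracy locus of the bivector to have codimension at least $2$; the finite non-principal strata must be dealt with separately, e.g.\ by the Chevalley--Shephard--Todd argument (symplectic groups contain no pseudoreflections) used in Proposition \ref{prop:SM=PRConnected}, which under your smoothness hypothesis forces the slice group to act trivially and the bracket to be non-degenerate there. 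With these three points supplied the argument would go through, but they are the substantive content, and the paper's generator count avoids all of them.
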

\begin{proof}
As above, assume without loss of generality that $V^G = \{0\}$ and $V$ is faithful,
choose coordinates $z_1,\ldots,z_n$ on $V\simeq \C^n$ with respect to which the action of $G$
is diagonal, and let $w_1,\ldots,w_n$ be the dual coordinates. Because $V$ is $1$-modular,
$\ell \leq n - 1$ by Lemma \ref{lem:dimVminusdimG}.

Assume for simplicity that the characters corresponding to $z_1,\ldots,z_\ell$ are linearly independent
in the character group $X(G)$ of $G$. Then there are $n - \ell$ indices $i$ such that $z_i w_i$
does not vanish on $N$. For each index $k > \ell$, there is a minimal invariant
of the form $z_k^{a_k} w_k^{b_k}\prod_{i=1}^\ell z_i^{a_i} \prod_{i=1}^\ell w_j^{b_j}$ where the $a_i$
and $b_j$ are nonnegative and $a_i b_i = 0$ for all $i$. Dually, there is a minimal invariant
$w_k^{a_k} z_k^{b_k}\prod_{i=1}^\ell w_i^{a_i} \prod_{i=1}^\ell z_j^{b_j}$.
Hence, there are at least $3(n - \ell)$ minimal generators, while $\dim_{\C} N\git G \leq 2(n - \ell)$,
so that $N\git G$ is not smooth.
\end{proof}

If $(V,G)$ is $1$-large and $G^0$ is a torus, then for any symplectic slice representation
$(W\oplus W^\ast,L)$, $(W,L^0)$ is $1$-modular by Corollary \ref{cor:ShellSliceProperties}.
Hence, applying Lemma \ref{lem:TorusSingular} to the local model $N_S\git L$ of $N\git G$
corresponding to each $(W,L^0)$ as well as
Proposition \ref{prop:SM=PRConnected}, we have that $(N\git G)_{\pr} = (N\git G)_{\sm}$
in this case. Note that if $(V,G)$ is not $1$-large, then $N\git G$ may not be the complex
symplectic quotient of Definition \ref{def:CSympQuot}.

\begin{proof}[Proof of Theorem \ref{thrm:TorusMain}]
Assume that $G$ is a reductive group such that $G^0$ is a torus. Let $V$ be a $G$-module, and let
$V = V_1\oplus V_2$ where $V_2 = V^{G^0}$. Then letting $M$ denote the real shell associated to $(V_1,G^0)$
the real symplectic quotient associated to $(V,G)$ is given by $(M\times V_2)/K = ((M/K^0)\times V_2)/(K/K^0)$.
By Lemma \ref{lem:TorusStableRed}, we may replace $(V_1,G^0)$
with a faithful stable torus module $(V_1^\prime, (G^0)^\prime)$ such that the real shell $M$
and the real and complex symplectic quotients of $(V_1,G^0)$ and $(V_1^\prime, (G^0)^\prime)$
coincide. Let $N$ denote the complex shell associated to
$(V_1^\prime, (G^0)^\prime)$ and then the corresponding complex symplectic quotient is given by
$N\git (G^0)^\prime$. By Proposition \ref{prop:TorusRationalShell}, both $N$ and $N\git (G^0)^\prime$
have rational singularities.
Similarly, Boutot's theorem \cite{Boutot} implies that $N\git G$ has rational singularities.

Recall that the Hilbert series of regular functions on the real and
complex symplectic quotients coincide. The complex symplectic quotient $N\git (G^0)^\prime$ is then
graded Gorenstein by \cite[Theorem 1.3 and Corollary 1.8]{HerbigHerdenSeaton}. Then by
Theorem \ref{thrm:NamikawaRatGoren}, $N\git (G^0)^\prime$ has symplectic singularities.
In the case $G = G^0$, it follows that the complex symplectic quotient $N\git (G^0)^\prime$ associated
to $(V,G)$ is graded Gorenstein with symplectic singularities, completing the proof in this case.
Otherwise, $(N\git (G^0)^\prime) \times (V_2\oplus V_2^\ast)$ has symplectic singularities so that by
\cite[Proposition 2.4]{Beauville}, the symplectic quotient
$\big((N\git (G^0)^\prime) \times (V_2\oplus V_2^\ast)\big)/(G/G^0)$
associated to $(V,G)$ has symplectic singularities.
Finally, let $\sigma_{(N \times V_2\oplus V_2^\ast)\git (G^0)^\prime}$ denote the generator of the
canonical module of $\C[N\times V_2\oplus V_2^\ast]^{(G^0)^\prime}$ constructed in
the proof of Theorem \ref{thrm:GradeGoren}. An element $g\in G$ acts on
$\sigma_{N \times V_2\oplus V_2^\ast}$ by the determinant of the action on $\mathfrak{g}^\ast$
and acts on $A_1\wedge\cdots\wedge A_\ell$ by the determinant of the action on $\mathfrak{g}$;
as these determinants are inverses, $\sigma_{(N \times V_2\oplus V_2^\ast)\git (G^0)^\prime}$
is in fact $G$-invariant. Hence $\C[N\times V_2\oplus V_2^\ast]^{(G^0)^\prime}$ and
$\C[N\times V_2\oplus V_2^\ast]^G$ have the same $a$-invariant and dimension, implying that
$\C[N\times V_2\oplus V_2^\ast]^G$ is graded Gorenstein and completing the proof.
\end{proof}

% xxxxxxxxxxxxxxxxxxxxxxxxxxxxxxxxxxxxxxxxxxxxxxxxxxxxxxxxxxxxxxxxxxxxxxxxx
% xxxxxxxxxxxxxxxxxxxxxxxxxxxxxxxxxxxxxxxxxxxxxxxxxxxxxxxxxxxxxxxxxxxxxxxxx
% xxxxxxxxxxxxxxxxxxxxxxxxxxxxxxxxxxxxxxxxxxxxxxxxxxxxxxxxxxxxxxxxxxxxxxxxx

\section{The case of $K = \SU_2$}
\label{sec:SU2}

In this section, we prove Theorem \ref{thrm:SU2Main}. Throughout this section,
we let $K = \SU_2$ so that $G = \SL_2(\C)$.
In this case, every irreducible unitary $K$-module is isomorphic to $(R_d,\SU_2)$
for some $d \geq 1$ where $R_d$ denotes the set of binary forms of degree $d$; similarly, every
irreducible $\SL_2(\C)$-module is isomorphic to some $(R_d,\SL_2(\C))$, see \cite[Section 15.6]{WignerBook}
or \cite[Section 4]{BerndtBook}.
Note that $R_d\simeq S^d\C^2$ with the canonical $\SL_2(\C)$-action.

The proper reductive subgroups of $\SL_2(\C)$ are finite, $\C^\times$, and the normalizer of $\C^\times$,
with the connected component of the identity in the latter equal to $\C^\times$. Hence, if $(V,\SL_2(\C))$
is $2$-large, then for each symplectic slice representation $(W\oplus W^\ast,L)$ such that $L$ is not
principal, we have that $(W,L)$ is $1$-modular by Corollary \ref{cor:ShellSliceProperties}. If $L = \SL_2(\C)$,
then $(W,\SL_2(\C)) = (V,\SL_2(\C))$ and the corresponding $N_S\git L$ is singular by
\cite[Lemma 2.3]{HerbigSchwarzSeaton}. Otherwise, $N_S\git L$ is singular by Lemma \ref{lem:TorusSingular} and
Proposition \ref{prop:SM=PRConnected} so that $(N\git G)_{\pr} = (N\git G)_{\sm}$.

By \cite[Theorem 11.9]{GWSlifting}, we have that the nontrivial finite-dimensional
complex representations $\bigoplus_{d\geq 1} (R_d)^{\oplus m_d}$
of $\SL_2(\C)$ that are not $2$-large are the following:
\[
    R_1^{\oplus k} \;\mbox{for}\; 1 \leq k \leq 3,
    \quad
    R_2,
    \quad
    R_2^{\oplus 2},
    \quad
    R_2\oplus R_1,
    \quad
    R_3,
    \quad
    R_4.
\]
Hence, all other nontrivial $\SL_2(\C)$-modules satisfy the hypotheses of Theorem \ref{thrm:GeneralMain}.
The cases
listed above are exactly those treated in \cite[Theorem 1.2]{Becker}
using a different approach; we will apply Becker's results in some cases but also indicate alternative
methods to treat these cases.

By \cite[Theorem 1.6]{HerbigSchwarzSeaton}, the real symplectic quotients corresponding to the
following representations are each graded regularly symplectomorphic to a linear symplectic orbifold:
\[
    R_1^{\oplus k} \;\mbox{for}\; k = 1,2,
    \quad
    R_2,
    \quad
    R_3,
    \quad
    R_4,
\]
That is, for each $\SL_2(\C)$-module $V$ listed
above, the real regular functions
$\R[M_0]$ on the real symplectic quotient $M_0$ are graded isomorphic to $\R[W\oplus W^\ast]^H$
where $H$ is a finite group and $W$ is a unitary $H$-module (considered as its underlying real vector space).
It is an immediate consequence that the corresponding complex symplectic quotients are graded isomorphic to
$(W_\C\oplus W_\C^\ast)/ H$ where   $W_\C = W\otimes_\R \C$. Such a quotient has
symplectic singularities and is graded Gorenstein by Theorem \ref{thrm:Orbifolds}. Hence,
Theorem \ref{thrm:SU2Main} holds for all nontrivial $\SL_2(\C)$-modules $V$ such that $V^{\SL_2(\C)} = \{0\}$
except for
$R_1^{\oplus 3}$, $R_2^{\oplus 2}$, and $R_2\oplus R_1$. We will consider these three representations individually below,
demonstrating Theorem \ref{thrm:SU2Main} in each case
as well as the stronger fact that the shell $N$ has rational singularities.

Each of the orbifold cases listed above is $1$-large except for $R_1$, $R_1^{\oplus 2}$, and $R_2$.
In the case of $R_2$, $(J)$ is real by \cite[Example 7.14]{ArmsGotayJennings} so that Lemma \ref{lem:R=CSympQuot}
applies. That is, our definition of the complex symplectic quotient coincides with those of Equations
\eqref{eq:defCSympQuotAlgebraic} and \eqref{eq:defCSympQuotComplexGeometric}.
This, however, is not the case for $R_1$ and
$R_1^{\oplus 2}$; see Examples \ref{ex:SL2-R1} and \ref{ex:SL2-2R1} above.

% xxxxxxxxxxxxxxxxxxxxxxxxxxxxxxxxxxxxxxxxxxxxxxxxxxxxxxxxxxxxxxxxxxxxxxxxx

\subsection{$R_2^{\oplus 2}$}
\label{subsec:2R2}

Let $V = R_2^{\oplus 2}$ and recall that $(R_2,\SL_2(\C))$ is isomorphic to the adjoint representation
of $\SL_2(\C)$. Then the action of $\SL_2(\C)$ on $V$ is not effective, as the negative identity acts trivially.
We have $\SL_2(\C)/\{\pm\id\} \simeq\SO_3(\C)$, and $V$
is isomorphic to
$W^{\oplus 2}$ where $W$ is the standard representation of $\SO_3(\C)$ on $\C^3$,
i.e. $V\oplus V^\ast$ is isomorphic to
$W^{\oplus 4}$.

Recall \cite[Theorem 3.4]{HerbigSchwarz} that $V$ is $1$-large so that the ideal $(J)$ generated
by the moment map in
$\R[W^{\oplus 2}]$ is real, hence the ideal $(\mu)$ in
$\C[W^{\oplus 4}]$ is radical.
In \cite[Theorem 5.1]{CapeHerbigSeaton}, it is demonstrated that the variety of the
moment map associated to
$V = W^{\oplus 2}$ has rational singularities. Note that this theorem considers
the corresponding real variety, but the proof implicitly appeals to the complexification
and hence is identical for the complex variety. Hence, by the theorem of Boutot \cite{Boutot},
the complex symplectic quotient $N\git \SO_3(\C) = N\git\SL_2(\C)$
has rational singularities.

Similarly, in \cite[Section 6.2]{CapeHerbigSeaton}, the Hilbert series of the ring $\R[M_0]$
of regular functions on the real symplectic quotient is computed to be
\[
    \Hilb_{\R[M_0]}(t)    =   \frac{1 + 4t^2 + 4t^4 + t^6}{(1 - t^2)^6}.
\]
This of course coincides with the Hilbert series of the regular functions
$\R[M_0]\otimes_{\R}\C$ on the complex symplectic quotient $N\git\SL_2(\C)$.
As
\[
    \Hilb_{\R[M_0]}(1/t) = \frac{t^6(1 + 4t^2 + 4t^4 + t^6)}{(1 - t^2)^6}
        =   (-1)^6 t^6 \Hilb_{\R[M_0]}(t),
\]
Equation \eqref{eq:StanleyGorenHilb} is satisfied with
negative $a$-invariant
equal to $6$, the Krull dimension, so that the complex symplectic quotient is
graded Gorenstein.

We have that $N\git\SL_2(\C)$ is Gorenstein with rational singularities, and by Corollary \ref{cor:SympFormSmooth},
the smooth locus $(N\git\SL_2(\C))_{\sm}$ admits a regular symplectic form. By Theorem
\ref{thrm:NamikawaRatGoren}, $N\git\SL_2(\C)$ has symplectic singularities.
It follows that Theorem \ref{thrm:SU2Main} holds in this case.

% xxxxxxxxxxxxxxxxxxxxxxxxxxxxxxxxxxxxxxxxxxxxxxxxxxxxxxxxxxxxxxxxxxxxxxxxx

\subsection{$R_1^{\oplus 3}$}
\label{subsec:3R1}

Suppose that $V = R_1^{\oplus 3}$ and note that $(V,\SL_2(\C))$ is $1$-large. Then $V\oplus V^\ast$ has
complex dimension $12$ so that as the shell $N$ is a complete intersection by
\cite[Proposition 9.4]{GWSlifting},
$N$ has complex dimension $9$. In addition, $V\oplus V^\ast$ is isomorphic to $R_1^{\oplus 6}$ and has a linear action of
$\GL_6(\C)$ commuting with that of $\SL_2(\C)$. The quotient $(R_1^{\oplus 6})\git\SL_2(\C)$ is isomorphic to the
subspace of $\wedge^2\C^6$ of $2$-forms of rank at most two. There are two $\GL_6(\C)$-orbits
in the quotient, one of them being the origin. Now, a point $v\in R_1^{\oplus 6}$ is in the null cone
$\NN=\NN(R_1^{\oplus 6})$ if and only if the $\SL_2(\C)$-isotropy group of the point is nontrivial, i.e.,
if and only if the six components of $v$ do not span $R_1$. It follows that $\NN$ has complex dimension
$7$ and hence intersects $N$ in complex codimension at least $2$. By Proposition \ref{prop:NormalityShell},
as $(N\smallsetminus\NN)\subset N_{\sm}$, it follows that $N$ is normal. Similarly,
$N\git \SL_2(\C)\subset\wedge^2\C^6$ is smooth except at the orbit of the origin. However,
$\dim_\C N\git \SL_2(\C) = 6$ so that by Theorem \ref{thrm:FlennerCodim} and Corollary \ref{cor:SympFormSmooth},
$N\git \SL_2(\C)$ has symplectic singularities. In fact, the shell $N$ has rational singularities; this
can be verified by a computation similar to, though simpler than, that given in Lemma \ref{lem:R2R1ShellRational}
below.

Similarly, as $(V,\SL_2(\C))$ is $1$-large and $N$ is normal,
it follows from Theorem \ref{thrm:GradeGoren} that $N\git \SL_2(\C)$ is graded Gorenstein.
Alternatively, we have computed the Hilbert series of the regular functions on the (real or complex)
symplectic quotient to be
\[
    \Hilb_{\C[N\git \SL_2(\C)]}(t)
    =
    \frac{1 + 9t^2 + 9t^4 + t^6}{(1 - t^2)^6}.
\]
The $a$-invariant is
$-6 = -\dim_\C\C[N\git \SL_2(\C)]$ implying by Theorem \ref{thrm:StanleyGorenHilb} that
$\C[N\git \SL_2(\C)]$ is graded Gorenstein.

% xxxxxxxxxxxxxxxxxxxxxxxxxxxxxxxxxxxxxxxxxxxxxxxxxxxxxxxxxxxxxxxxxxxxxxxxx

\subsection{$R_2\oplus R_1$}
\label{subsec:R2+R1}

Let $V = R_2 \oplus R_1$, and then $(V, \SL_2(\C))$ is $1$-large so that Lemma \ref{lem:R=CSympQuot}
applies. An explicit description of the generators and relations of the regular functions on
the complex symplectic quotient
$\C[V\oplus V^\ast]^{\SL_2(\C)}/(\mu)^{\SL_2(\C)}$ is computed
in \cite[Proposition 5.3]{Becker}, which is used in \cite[Proposition 5.5]{Becker} to demonstrate
that $N\git\SL_2(\C)$ is a symplectic variety. Moreover, one may use this description to compute the
Hilbert series
\begin{equation}
\label{eq:R2pR1Hilb}
    \Hilb_{\C[N\git \SL_2(\C)]}(t)
    =
    \frac{1 + 2t^2 + 3t^3 + 2t^4 + 2t^5 + 3t^6 + 2t^7 + t^9}
        {(1 - t^2)^2 (1 - t^3) (1 - t^6)}.
\end{equation}
Using this and Theorem \ref{thrm:StanleyGorenHilb}, one computes that the $a$-invariant is $-4$ and
the
complex dimension is $4$ so that $N\git\SL_2(\C)$ is graded Gorenstein.

Let us indicate an alternative method of demonstrating that $N\git\SL_2(\C)$ is a symplectic
variety and computing the Hilbert series. First, we observe that in this case, a variation on the
argument given in \cite[Theorem 5.1]{CapeHerbigSeaton} can be used to prove that the shell $N$
itself has rational singularities.

\begin{lemma}
\label{lem:R2R1ShellRational}
The shell $N$ associated to $(R_2 \oplus R_1,\SL_2(\C))$ has rational singularities.
\end{lemma}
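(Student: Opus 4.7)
The plan is to verify the hypotheses of the Flenner-Watanabe theorem \cite[Satz 3.1]{FlennerRational}, \cite[Theorem 2.2]{WatanabeRational}: namely, that $\C[N]$ is Cohen-Macaulay and normal with negative $a$-invariant and has rational singularities on the punctured spectrum. Since $(R_2\oplus R_1,\SL_2(\C))$ is $1$-large and in particular $0$-modular, Proposition \ref{prop:NormalityShell}({\it i}) shows that $N$ is a complete intersection in $V\oplus V^\ast\simeq\C^{10}$ cut out by the three quadratic components of $\mu$, so $\dim_\C N=7$ and $\C[N]$ has Hilbert series $(1-t^2)^3/(1-t)^{10}$; Theorem \ref{thrm:StanleyGorenHilb} then yields $a$-invariant $-4<0$. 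For normality via Proposition \ref{prop:NormalityShell}({\it vi}), I would bound the dimension of the locus of points with positive-dimensional $G$-isotropy, which is contained in $G\cdot(V\oplus V^\ast)^T\cup G\cdot N^U$ for a maximal torus $T$ and maximal unipotent subgroup $U$ of $\SL_2(\C)$. A weight-space decomposition of $V\oplus V^\ast\simeq R_2^{\oplus 2}\oplus R_1^{\oplus 2}$ yields $\dim_\C(V\oplus V^\ast)^T=2$ (automatically in $N$, with $G$-saturation of dimension at most $4$) and $\dim_\C(V\oplus V^\ast)^U=4$ with one nontrivial moment-map component, giving $\dim_\C N^U=3$ and $G$-saturation of dimension at most $5$; both strata are of codimension at least $2$ in the $7$-dimensional $N$.

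To verify rational singularities on $N\setminus\{0\}$, I would apply the symplectic slice theorem (Theorem \ref{thrm:ShellInSlice}) at each closed $G$-orbit $Gx_0\neq\{0\}$, presenting $N$ \'etale-locally as $G\times_L N_S$ with $L$ reductive by Matsushima's theorem. For $V=R_2\oplus R_1$ the reductive isotropies of closed orbits in $N$ are, up to conjugacy, finite---in which case $N_S\simeq W\oplus W^\ast$ is smooth---or the maximal torus $T$; in the torus case a direct weight computation identifies the symplectic slice as the faithful stable torus representation $W\simeq\C(1)\oplus\C(-1)\oplus\C(0)$, whose shell has rational singularities by Proposition \ref{prop:TorusRationalShell}. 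Since the rational-singularity locus is open and $G$-invariant, and every $G$-invariant closed subset containing a point $x$ also contains the unique closed orbit in $\overline{Gx}$, this yields rational singularities at every $x\in N$ whose orbit closure meets a non-origin closed orbit.

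The main obstacle is the remaining case: points $x\in N\setminus\{0\}$ whose orbit closure contains only $\{0\}$ as a closed orbit, which must lie in $G\cdot N^U\setminus\{0\}$ since the null-cone points with trivial isotropy are smooth. I would treat these by working in the affine chart $\{x_1\neq 0\}$, where $x_1$ is a highest-weight coordinate for $R_2$, and using $\mu^F=\mu^H=0$ to eliminate $p_1$ and $p_2$; this exhibits $N\cap\{x_1\neq 0\}$ as the hypersurface in $\C^8$ with defining equation $q_1(2x_1y_2-x_2y_1)+q_2(x_2y_2-2x_3y_1)$. A direct Jacobian analysis shows that along the open stratum of the singular locus, a suitable linear change of coordinates brings the defining equation to the rank-$4$ form $\alpha\beta-\eta\delta$ in four of the eight variables, identifying the singularity as an ordinary double point times a smooth factor and hence rational; the deeper sublocus where this quadratic form degenerates further requires either an explicit small resolution or an appeal to a general rational-singularity criterion for quadric-bundle singularities. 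Covering a $G$-invariant neighborhood of $G\cdot N^U\setminus\{0\}$ by $G$-translates of such charts completes the verification on the punctured spectrum, and Flenner-Watanabe then yields rational singularities of $N$.
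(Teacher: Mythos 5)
Your overall skeleton matches the paper's: complete intersection plus Hilbert series gives Cohen--Macaulay with $a$-invariant $-4$, normality via codimension of the positive-dimensional-isotropy locus, and then Flenner--Watanabe reduces everything to rational singularities on $N\smallsetminus\{0\}$. Your normality bound via $G\cdot(V\oplus V^\ast)^T$ and $G\cdot N^U$ is fine, and your slice-theorem reduction for points whose orbit closure meets a nonzero closed orbit (finite isotropy gives smooth slices, $T$-isotropy gives the torus shell for weights $(1,-1,0)$, handled by Proposition \ref{prop:TorusRationalShell}, using that rational singularities are \'etale- and smooth-local) is a legitimate alternative to part of the paper's argument, which instead treats all nonzero points directly in charts.

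However, the crux --- the nonzero singular points of the null cone --- is where your argument has a genuine gap, and you say so yourself: in the chart $\{x_1\neq 0\}$ the eliminated system leaves a cubic hypersurface, you only identify an ordinary double point times a smooth factor along the open stratum of its singular locus, and the ``deeper sublocus'' is left to an unproved appeal to a small resolution or an unspecified quadric-bundle criterion; knowing rational singularities on an open part of the singular locus does not propagate to its closure. In addition, $G$-translates of the single chart $\{x_1\neq 0\}$ (a condition on the $R_2$-component only) cannot cover null-cone points whose $R_2$-component vanishes, e.g.\ points supported in $R_2^\ast$ or $R_1\oplus R_1^\ast$, so the claimed covering of a neighborhood of $G\cdot N^U\smallsetminus\{0\}$ fails as stated. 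The paper avoids both problems by a sharper use of the orbit: at a point with nonzero $R_1$ (or $R_1^\ast$) component one can move within the orbit so that \emph{both} coordinates $x,y$ (resp.\ $x',y'$) are nonzero and eliminate $x',y'$ from $\mu_1,\mu_3$ with unit coefficients; at a point with $x=y=x'=y'=0$ one moves within the orbit so that the \emph{zero-weight} coordinate $z_0'$ (or $z_0$) is nonzero and eliminates $z_{\pm 2}$. In every such chart the remaining equation $\mu_2=0$ becomes a rank-four quadric (e.g.\ $uv'-vu'=0$ or $x's+y't=0$) with an isolated singular point, to which Flenner--Watanabe applies directly, so no residual stratum is left over. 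To complete your proof you would either need to adopt this kind of chart (choosing, orbit by orbit, a point where the elimination leaves a quadric with isolated singularity) or actually prove rational singularities of your cubic along its entire singular locus away from the origin, which is the step currently missing.
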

\begin{proof}
Let $V = R_2 \oplus R_1$. We use coordinates for $V\oplus V^\ast$ transforming by weights of $\C^\times\leq\SL_2(\C)$.
For $R_2$ we have $z_2$, $z_0$ and $z_{-2}$ (of weights indicated by the subscripts), and for $R_2^\ast$ we similarly have $z'_2$, $z'_0$ and $z'_{-2}$.
For $R_1$ we have $x$ of weight $1$ and $y$ of weight $-1$, and similarly $x'$ and $y'$ for $R_1^\ast = R_1$.
Then the coordinate functions of the moment map are given by
\begin{align*}
    \mu_1   &=  xx'+z_0z_2'-z_2z_0',
    \\
    \mu_2   &=  xy'+yx'+2(z_{-2}z_2'-z_2z_{-2}'),
    \\
    \mu_3   &=  yy'+z_{-2}z_0'-z_0z_{-2}'.
\end{align*}
As $(V,\SL_2(\C))$ is $1$-large, $(\mu)$ is radical and $N$ is a complete intersection. In particular,
$\C[V\oplus V^\ast]/(\mu)$ is Cohen-Macaulay with Hilbert series $(1 - t^2)^3/(1 - t)^{10}$, hence
the $a$-invariant is $-4$.
A computation of $d\mu$ demonstrates that $N$ is smooth in codimension $1$ and hence
normal. With this, by the theorem of Flenner and Watanabe
\cite[Satz 3.1]{FlennerRational}, \cite[Theorem 2.2]{WatanabeRational},
\cite[Theorem 9.2]{HunekeTight}, it is sufficient to show that $N$ has rational singularities away from the
origin. Because $\SL_2(\C)$ acts on $N$ as isomorphisms, it is sufficient to show that each orbit contains
a point at which $N$ has rational singularities.

At a point where $x$ or $y$ does not vanish, we can by applying an element of $\SL_2(\C)$ assume that both
$x$ and $y$ do not vanish. Localizing at such points, we solve $\mu_1 = 0$ for $x'$ and $\mu_3 = 0$ for $y'$,
and then the equation $\mu_2=0$ can be written
\[
    -2 u v^\prime + 2 v u^\prime = 0
\]
where $u = z_2 y/x - z_0/2$, $v = z_{-2} x/y - z_0/2$,
$u^\prime = z_2^\prime y/x - z_0^\prime/2$, and $v^\prime = z_{-2}^\prime x/y - z_0^\prime/2$.
The corresponding hypersurface is singular only at the origin and hence has rational singularities
by Flenner-Watanabe. The same argument applies near points where $x'$ or $y'$ does not vanish.

Near a nonzero point where $x$, $y$, $x'$, and $y'$ vanish, either some $z_i$ or some $z'_i$ is nonzero.
Assume some $z'_i\neq 0$, and then by moving within an orbit, we may assume that $z'_0 \neq 0$.
Localizing at such points, we solve $\mu_1 = 0$ for $z_2$ and $\mu_3 = 0$ for $z_{-2}$ and then
express $\mu_2 = 0$ as
\[
    x' s + y' t = 0
\]
where $s = y - 2 x z'_{-2}/z'_0$ and $t = x - 2 y z'_2/z'_0$. We again have that this hypersurface
has an isolated singularity at the origin and negative $a$-invariant and hence rational singularities
by Flenner-Watanabe. The same argument again applies near points where some $z_i$ is nonzero, completing
the proof.
\end{proof}

Finally, we describe a direct computation of the Hilbert series of $N\git G$ given in Equation
\eqref{eq:R2pR1Hilb}. Consider $\mathfrak{g}$ as the quadratic equations defining the shell $N$. We again have
that the elements of $\mathfrak{g}$ are a regular sequence. Let $S$ denote
$\C[V\oplus V^*]$, and then we have an exact sequence
\[
    0   \to S\simeq S\otimes\wedge^3\mathfrak{g}
        \to S\otimes \wedge^2\mathfrak{g}
        \to S\otimes\mathfrak{g}\to S
        \to S/\mathfrak{g}\to 0.
\]
Taking $G = \SL_2(\C)$-invariants yields
\[
    0   \to S^G
        \to (S\otimes \wedge^2\mathfrak{g})^G
        \to (S\otimes\mathfrak{g})^G
        \to S^G\to (S/\mathfrak{g})^G
        \to 0.
\]
Now, the elements of $\mathfrak{g}$ are in degree $2$, the elements of $\wedge^2\mathfrak{g}$ are in degree $4$,
and $\wedge^3\mathfrak{g}$ is in degree $6$. Let $a(t)$ denote the Hilbert series of $S^G$ and let $b(t)$ denote
the Hilbert series for the occurrences of $\mathfrak{g}$ in $S$. Then the Hilbert series  of $(S/\mathfrak{g})^G$
is
\[
    \Hilb_{\C[N\git \SL_2(\C)]}(t)    =   a(t)-t^2 b(t)+t^4b(t)-t^6a(t).
\]
Now one can use the program Lie \cite{Lie} to calculate $a(t)$ and $b(t)$ to any degree.
Going up to degree $14$, we found that
$\Hilb_{\C[N\git \SL_2(\C)]}(t)$ is the rational
function given by Equation \eqref{eq:R2pR1Hilb}. Using classical invariant theory, one
can compute that $\C[N\git \SL_2(\C)]$ has a homogeneous regular sequence consisting
of two elements of degree $2$, one of degree $3$, and one of degree $6$ such that the
numerator of the Hilbert series has degree at most $14$. Then the Lie calculation
confirms Equation \eqref{eq:R2pR1Hilb}.

% xxxxxxxxxxxxxxxxxxxxxxxxxxxxxxxxxxxxxxxxxxxxxxxxxxxxxxxxxxxxxxxxxxxxxxxxx
% xxxxxxxxxxxxxxxxxxxxxxxxxxxxxxxxxxxxxxxxxxxxxxxxxxxxxxxxxxxxxxxxxxxxxxxxx
% xxxxxxxxxxxxxxxxxxxxxxxxxxxxxxxxxxxxxxxxxxxxxxxxxxxxxxxxxxxxxxxxxxxxxxxxx

\bibliographystyle{amsalpha}

\def\cprime{$'$}
\providecommand{\bysame}{\leavevmode\hbox to3em{\hrulefill}\thinspace}
\providecommand{\MR}{\relax\ifhmode\unskip\space\fi MR }
% \MRhref is called by the amsart/book/proc definition of \MR.
\providecommand{\MRhref}[2]{%
  \href{http://www.ams.org/mathscinet-getitem?mr=#1}{#2}
}
\providecommand{\href}[2]{#2}

\end{document}